\newtheorem{assum}{Assumption}
\newtheorem{prop}{Proposition}
\newtheorem{rem}{Remark}
\newtheorem{lem}{Lemma}
\newtheorem{thm}{Theorem}
\newtheorem{cor}{Corollary}
\begin{document}
\title{Disturbance-resilient Distributed Resource Allocation over Stochastic Networks using Uncoordinated Stepsizes}

\author{Tie~Ding, Shanying~Zhu, \IEEEmembership{IEEE Member}, Cailian~Chen, \IEEEmembership{IEEE Member}, Xinping~Guan, \IEEEmembership{IEEE Fellow}
\thanks{The authors are with the Department of Automation, Shanghai Jiao Tong University, Shanghai 200240, China, and also with Key Laboratory of System Control and Information Processing, Ministry of Education of China, Shanghai 200240, China.

This work is supported in part by National Key Research and Development
Program of China (No. 2016YFB0901900), NSF of China under the grants
61922058, 61633017, and 61521063, and NSF of Shanghai Municipality of
China under grant 18ZR1419900.
}}

%



\maketitle

\begin{abstract}
This paper studies distributed resource allocation problem in multi-agent systems, where all the agents cooperatively minimize the sum of their cost functions with global resource constraints over stochastic communication networks. This problem arises from many practical domains such as economic dispatch in smart grid, task assignment and power allocation in robotic control. Most of existing works cannot converge to the optimal solution if states deviate from feasible region due to disturbance caused by environmental noise, misoperation, malicious attack, etc. To solve this problem, we propose a distributed deviation-tracking resource allocation algorithm and prove that it linearly converges to the optimal solution with constant stepsizes. We further explore its resilience properties of the proposed algorithm.
Most importantly, the algorithm still converges to the optimal solution under the disturbance injection and random communication failure. In order to improve the convergence rate, the optimal stepsizes for the fastest convergence rate are established. We also prove the algorithm converges linearly to the optimal solution in mean square even with uncoordinated stepsizes, i.e.,  agents are allowed to employ different stepsizes. Simulations are provided to verify the theoretical results.

\end{abstract}

\begin{IEEEkeywords}
Distributed resource allocation, Stochastic network, Deviation tracking, Resilience to disturbance
\end{IEEEkeywords}

%
\IEEEpeerreviewmaketitle

\section{Introduction}
\IEEEPARstart{R}ecently, distributed resource allocation problem has attracted much attention due to its wide application in various practical problems, such as economic dispatch in energy network \cite{52,53,54}, channel allocation in wireless communication \cite{55,56} and computing resource assignment in edge computing \cite{57}. In  resource allocation problems, 
centralized methods require an entity to collect information of agents and distribute the strategies of resource allocation or task assignment to all the agents \cite{45}. 
It suffers from issues of 
high requirement of synchronization, heavy cost of long-distance communication and poor scalability. 
Distributed resource allocation approaches avoid long-distance communication and make the network more scalable \cite{37}. Since each node only has local knowledge, it requires a reliable communication network to achieve global optimization. Due to the vulnerability nature of wireless communications, the network is easily affected by potential attack and environmental noise. This may lead to a stochastic communication network suffering from random failure, which results in information island, inaccurate estimation of the optimal solution and, eventually, inexact and stochastic convergence result. 
Therefore, it is significant to design proper distributed algorithms to obtain the optimal strategies effectively and simultaneously alleviate the stochasticity of convergence result resulting from link failures.

Distributed resource allocation problems are actually the dual problems of distributed consensus optimization, which requires all agents' states to be equal while the optimal point of distributed resource allocation problems is obtained when achieving consensus on marginal costs. Due to this property, \cite{46} proposes decentralized resource allocation algorithm that adopts weighted gradient method to ensure the consensus on gradients. Extension of the result \cite{46} to random gradients, the authors \cite{47} design a random coordinate descent algorithm based on weighted gradient method and prove that it converges to the optimal solution linearly in probability. 
However, both \cite{46} and \cite{47} are only suitable for fixed communication network and does not consider stochastic network with communication failure. 
The authors of \cite{67} extend the algorithm of \cite{46} to time-varying networks and prove that it converges to the optimal solution in quadratic time. 
But \cite{67}, as well as \cite{46,47}, requires the states to be kept feasible through all iterations.
If the state is disturbed by noise or malicious attacks at any moment, which causes infeasible states, a derivation from the optimal point occurs inevitably.


To deal with infeasible states, \cite{48} proposes an initialization-free distributed algorithm to solve optimal resource allocation problems with local constraints. 
But it is a continuous-time algorithm requiring infinite bandwidth to support the data exchange between agents and 
the convergence rate is far from optimal. Additionally, continuous-time algorithms cannot deal with stochastic communication network because it is difficult to define derivative with stochastic variables. 
Ref.\cite{49} proposes a distributed algorithm over dynamic networks, considering the uncertainty of local parameters. It converges to the optimal point but a decaying stepsize is needed, which results in a slower convergence rate.
A decaying stepsize is also adopted in \cite{50}. Using constant stepsize, \cite{36} proposes a distributed resource allocation with dual splitting approach (DuSPA), which ensures that both primary and dual states converge to the optimal point. But it can only be used in fixed network. Extending primary-dual methods to time-varying networks, the authors of \cite{68,69} propose distributed resource allocation algorithms which converge linearly to the optimal solution with constant stepsizes. Both \cite{68} and \cite{69} assume that the network is jointly strongly connected, i.e., each communication link should have bounded intercommunication interval. For stochastic networks, however, we cannot determine the bound of communication interval, especially when the possibility of packet loss is high. Moreover, these aforementioned works all adopt identical  stepsizes for all agents.
It is difficult for all agents to realize the consensus of stepsizes in distributed manner due to random communication failure. 

As one typical application of distributed resource allocation, economic dispatch problems in smart grid, where a group of microgrids cooperatively minimize the cost of generation subject to the balance between supply and demand, also gain lots of attention. The authors of \cite{61} propos an incremental cost consensus algorithm that solves economic dispatch problem in a distributed way. However, it requires a center entity to maintain the balance between supply and demand. To eliminate the centralized node,  a consensus + innovation approach is proposed in \cite{62} to deal with distributed energy management. Although it can be operated in a completely distributed way, a decaying stepsize is used in this algorithm, leading to slow convergence rate. Ref. \cite{63} proposes consensus based method which uses an auxiliary variable to estimate the mismatch between supply and demand. Convergence is proved when stepsize is small enough but no specific upper bound is provided. 
It is worth noting that \cite{61}-\cite{63} and many other related works are only suitable for those strictly quadratic cost functions of generation, which limits scope of application.
Moreover, these aforementioned works do not consider the communication failure, which may be caused by limited transmission energy, environmental noise, malicious attack, etc. Communication failure may cause the power system to be operated in a non-optimal condition, which will greatly increase the productivity cost \cite{65}.

We aim to solve distributed resource allocation problem over stochastic networks where communication links randomly fail and the states are injected by random disturbance. 
In this paper, 
a disturbance-resilient distributed algorithm targeting the communication failures is proposed with guaranteed linear convergence to the optimal solution. 

The contributions of this paper are shown as follows.
\begin{enumerate}
\item
We propose a disturbance-resilient distributed algorithm to solve distributed resource allocation problems over stochastic networks with random communication failure and stochastic disturbance on states. 
This algorithm is a combination of the deviation-tracking technique and weighted gradient scheme.
Different from most economic dispatch algorithms that only are suitable for quadratic cost functions \cite{61}-\cite{63}, the proposed algorithm works for general functions with Lipschitz gradients and strong convexity. Moreover, 
compared with \cite{68, 69},
we relax the assumption of 
joint connectivity of networks over certain bounded intercommunication interval that used in  to connectivity in mean. 
\item
We prove that the proposed algorithm converges linearly to the optimal solution in mean square even with random communication failure. A specific upper bound of the stepsizes that ensure convergence is provided. 
We further explore the resilience properties of the proposed algorithm. Compared with the algorithms proposed in \cite{46,47,67,36}, the proposed algorithm converges in mean square to the optimal solution even with disturbance on states while these works cannot resile from disturbance. We provide comparative simulations in Section VII B.
\item
Based on the convergence results, we obtain the estimate of the convergence rate.
To improve the convergence rate, the optimal stepsizes within the upper bound are established. 
Furthermore, we prove that the algorithm converges to the optimal solution in mean square even with uncoordinated stepsizes.

\end{enumerate}

\emph{Notation}: We denote the set of $n$-dimensional vectors and $n\times u$-dimensional matrices by $\mathbb{R}^n$ and $\mathbb{R}^{n\times u}$, respectively. $\bm{0}, \bm{1}\in\mathbb{R}^n$ represents the vectors of zeros and ones, respectively. $\bm{I}\in\mathbb{R}^{n\times n}$ is the identity matrix. $\Vert\cdot\Vert_F$ denotes the Frobenius norm for matrixes and $\Vert\cdot\Vert_2$ represents the Euclidean norm for vectors. $\mathbb{E}\{\cdot\}$ means the expectation and 
for any vector $\bm{v}\in\mathbb{R}^{n}$, we define expectation norm $\Vert\bm{v}\Vert_E\triangleq\sqrt{\mathbb{E}\{\Vert\bm{v}\Vert_2^2\}}$. For any vector sequence $\{\bm{v}(k)\}_{k=1}^{\infty}$, $\bm{v}(k)$ converges to $\bm{v}^*$ in mean square if $\lim_{k\rightarrow\infty}\Vert\bm{v}(k)-\bm{v}^*\Vert_E=0$.
$\rho(\cdot)$ denotes the spectral radius of matrixes. For any real symmetric matrix $\bm{A}\in\mathbb{R}^{n\times n}$, let $\{\lambda_i(\bm{A})\}_{i=1}^n$ be the eigenvalues such that $\lambda_n(\bm{A})\leq\lambda_{n-1}(\bm{A})\leq...\leq\lambda_1(\bm{A})$.
We denote by $x_i$ the local copy of the global variable $x\in \mathbb{R}^u$ at agent $i$. Its value at time $k$ is denoted by $x_i(k)$. 
We introduce the stacked matrix $\bm{x}=[x_{1},...,x_{n}]^T\in\mathbb{R}^{n\times u}$
and define $\bar{\bm{x}}\triangleq\frac{\bm{11}^T}{n}\bm{x}$, $\check{\bm{x}}\triangleq\bm{L}\bm{x}$, where $\bm{L}=\bm{I}-\frac{\bm{11}^T}{n}$.

\section{Problem Formulation}
	Consider a distributed resource allocation problem in multi-agent systems. For any $i\in\{1,...,n\}$, agent $i$ has its individual cost function $f_i: \mathbb{R}^u\rightarrow\mathbb{R}$, which is only known to agent $i$ itself. All the agents cooperate to minimize the sum of their cost functions with specified amount of resource:
\begin{equation}
\begin{split}
\min_{\bm{x}\in\mathbb{R}^{n\times u}} f(\bm{x})=\sum\limits_{i=1}^nf_i(x_i)\ \  s.t.\sum_{i=1}^{n}{x}_i=\sum_{i=1}^{n}{d}_i,
\end{split}
\label{problem1}
\end{equation}
where ${d}_i, \forall i\in\{1,...,n\}$ denotes the local demand of resource, which is only known by agent $i$ and does not share with other agents for privacy concern.
Here we make the following standard assumptions about the functions $f_i, i\in\{1,...,n\}$.
\begin{assum}\label{lip}
For every $i$, $f_i: \mathbb R^u \rightarrow \mathbb R$ is differentiable and has Lipschitz gradient with $\eta_i>0$,
\begin{equation}
\Vert \nabla f_i(x_a)-\nabla f_i(x_b)\Vert _2\leq \varphi_i\Vert x_a-x_b\Vert _2, \forall x_a,x_b\in \mathbb R^u.
\end{equation}
\end{assum}
\begin{assum}\label{convex}
For every $i$, $f_i: \mathbb R^u \rightarrow \mathbb R$ is strongly convex, i.e., there exists $\varphi_i>0$ such that $\forall x_a,x_b\in \mathbb R^u$,
\begin{equation}
(x_a-x_b)^T\left(\nabla f_i(x_a)-\nabla f_i(x_b)\right)\geq\eta_i\Vert x_a-x_b\Vert _2^2. \label{conve}
\end{equation}
\end{assum}

We model the topology of the network over which agents communicate with each other as a random graph $\mathcal{G}(k)=(\mathcal{V},\mathcal{E}(k))$, where $\mathcal{V}$ is the set of agents $\mathcal{V}=\{1,2,...,n\}$, $\mathcal{E}(k)\in\mathcal{V}\times\mathcal{V}$ denotes the edges. Let $\bm{W}(k)=[w_{ij}(k)]_{n \times n}\in\mathbb{R}^{n \times n}$, where $w_{ij}(k)$ is the weight of the edge $(i,j)$ at time $k$. 
$w_{ij}(k)>0$ means that agent $i$ can communicate with agent $j$ at time $k$. Since each agent has local knowledge, we have $w_{ii}(k)>0, \forall i, k$. 
All the agents in $\mathcal{N}_i(k)=\{j|w_{ij}(k)>0\}$ are called neighbors of agent $i$. 
Here, we consider that 
each communication link is subject to a random failure, that is, for any agent $i$ and $j\in\mathcal{N}_{i}(k)$, we have $\mathbb{P}\{w_{ij}(k)>0\}=\theta_{ij}(k)$ and $\mathbb{P}\{w_{ij}(k)=0\}=1-\theta_{ij}(k)$, $0<\theta_{ij}(k)<1$. Specifically, $\mathbb{P}\{w_{ii}(k)>0\}=1, \forall i$.
We assume that if link $(i, j)$ fails, link $(j, i)$ also fails.

Here, we provide a method to determine the weight $w_{ij}(k), \forall i,j,k$.
For each time $k$, agent $i$ sends information to agent $j$, along with the weight $w_i^j(k)$, $j\in\mathcal{N}_i$.  If the communication link between $i$ and $j$ works, agent $i$ and $j$ will receive $w_j^i(k)$ and $w_i^j(k)$, respectively. The weight of the edge $(i, j)$ is set as the smaller one between $w_i^j(k)$ and $w_j^i(k)$, i.e.,
\begin{align}
w_{ij}(k)=
\begin{cases}
\min\{w_i^j(k), w_j^i(k)\} & \text{if link}\  (i, j)\  \text{works}
\\0& \text{if link}\  (i, j)\ \text{fails}
\end{cases}.
\end{align}
Then, for each agent $i$, 
\begin{align}
w_{ii}(k)=1-\sum_{j\in\mathcal{N}_i}w_{ij}(k),
\end{align}
and thus $\sum_{j=1}^{n} w_{ij}(k)=1$ and $\sum_{j=1}^{n}w_{ji}(k)=1, \forall i, k$.

Based on the above method to obtain the weights, we make the following assumption on the communication network \cite{16, 66}. 

\begin{assum}\label{network}
The weight matrices $\{\bm{W}(k)\}_{k=0}^{\infty}$ are a sequence of i.i.d matrices from some probability space $\mathcal{F}=(\Omega, \mathcal{B}, \mathcal{P})$ such that each $\bm{W}(k)$ is symmetric, doubly stochastic, i.e., $\forall k$
\begin{align}
\bm{W}(k)=&\bm{W}(k)^T,\ \  \bm{1}^T\bm{W}(k)=\bm{1}^T,\ \ \bm{W}(k)\bm{1}=\bm{1},\label{doublestochastic}
\end{align}
and 
\begin{align}
\rho\left(\mathbb{E}\{\bm{W}(k)\}-\frac{\bm{11}^T}{n}\right)<1.\label{con1}
\end{align}
\end{assum}

Eq.\eqref{con1} is equivalent to that the graph is connected in mean. 
It should be noted that a connected graph is important for all agents to achieve global optimal solution with only local communication. 
In \cite{68} and \cite{69}, the graph is assumed to be jointly connected, i.e., for any $k$, within constant $B$ intervals, the joint graph $\bigcup_{t=k}^{k+B}\mathcal{E}(t)$ is connected. In stochastic networks, due to the randomness of communication link failure, we cannot determine a constant $B$ such that the joint graph is connected with every $B$ intervals. 
In this paper, we only require the the graph to be connected in mean, which means that the communication network may be disconnected at each interval. This is different from the assumption of bounded intercommunication interval in \cite{68,69}.

Let $\bar{\lambda}_{2e}=\max_{k}\{\lambda_2(\mathbb{E}\{\bm{W}(k)\})\}$, $\underline{\lambda}_{ne}=\min_{k}\{\lambda_n(\mathbb{E}\{\bm{W}(k)\})\}$. Due to \eqref{con1}, we have that $-1<\underline{\lambda}_{ne}\leq\bar{\lambda}_{2e}<1$.
Moreover, let $\underline{\lambda}_n=\inf_{k}\{\lambda_n(\bm{W}(k))\}$.
Since $\bm{W}(k)$ is doubly stochastic, $w_{ii}(k)>0$ and $w_{ij}(k)\geq0$, we have $\underline{\lambda}_n>-1$.
\section{Algorithm development}
\subsection{Optimal conditions}
Before introducing our algorithm, we give the conditions of the optimal solution. 
\begin{prop}\label{optimalcondition}
$\bm{x}^*=[{x}_1^{*},...,{x}_n^{*}]^T\in\mathbb{R}^{n\times u}$ is the optimal solution of problem \eqref{problem1} if $\bm{x}^*$ satisfies the following two conditions.
\begin{enumerate}
\item[(i)] 
$\bm{1}^T\bm{x}^*=\bm{1}^T\bm{d},$
\item[(ii)] 
there exists ${{\mu}}^*\in\mathbb{R}^{u}$ such that $\nabla\bm{f}(\bm{x}^*)=\bm{1}({{\mu}}^*)^T$.
\end{enumerate}
\end{prop}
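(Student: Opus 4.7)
The plan is to recognize that Proposition~\ref{optimalcondition} is just the KKT characterization for the equality-constrained convex program \eqref{problem1}, and to verify sufficiency directly from convexity, so that no constraint qualification beyond Slater (trivially satisfied for linear equality constraints) is needed.

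First I would introduce the Lagrangian
\begin{equation*}
L(\bm{x},\mu)=\sum_{i=1}^n f_i(x_i)-\mu^T\!\left(\sum_{i=1}^n x_i-\sum_{i=1}^n d_i\right),\qquad \mu\in\mathbb{R}^u,
\end{equation*}
and observe that the stationarity condition $\nabla_{x_i}L(\bm{x}^*,\mu^*)=0$ yields $\nabla f_i(x_i^*)=\mu^*$ for every $i$, which stacked over $i=1,\dots,n$ is exactly condition (ii); primal feasibility $\sum_i x_i^*=\sum_i d_i$ is exactly condition (i).

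Next, for the sufficiency claim, I would avoid citing a KKT theorem and instead give a one-line convexity argument. Pick any feasible $\bm{x}\in\mathbb{R}^{n\times u}$ with $\sum_i x_i=\sum_i d_i$. Strong convexity (Assumption~\ref{convex}) in particular implies convexity, so applying the first-order inequality to each $f_i$ and summing gives
\begin{equation*}
f(\bm{x})-f(\bm{x}^*)\;\geq\;\sum_{i=1}^n \nabla f_i(x_i^*)^T(x_i-x_i^*)\;=\;(\mu^*)^T\sum_{i=1}^n(x_i-x_i^*)\;=\;0,
\end{equation*}
where the middle equality uses condition (ii) and the final equality uses condition (i) together with feasibility of $\bm{x}$. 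Hence $\bm{x}^*$ minimizes $f$ over the feasible set. Uniqueness (should it be wanted elsewhere) would follow at once from strong convexity, but is not required for the stated claim.

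There is no genuine obstacle in this proof; the only thing worth being careful about is the matrix/vector bookkeeping when moving between the per-agent form $\nabla f_i(x_i^*)=\mu^*$ and the stacked form $\nabla\bm{f}(\bm{x}^*)=\bm{1}(\mu^*)^T$, and ensuring the sum $\sum_i(x_i-x_i^*)$ is written as a row-stacking so that the inner product with $\mu^*$ makes dimensional sense.
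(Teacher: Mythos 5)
Your proposal is correct, and it refines the paper's argument in one useful way. The paper's own proof is the same Lagrangian/KKT identification you start with: it writes down $L(\bm{x},\mu)$, invokes the KKT conditions to obtain $\nabla f_i(x_i^*)=\mu^*$ for all $i$ and $\sum_i x_i^*=\sum_i d_i$, and then simply states that these equations "verify" conditions (i) and (ii) — implicitly relying on the standard fact that for a convex problem with affine constraints the KKT conditions are sufficient for global optimality. You do the same bookkeeping (your sign convention on $\mu$ is immaterial), but instead of citing KKT sufficiency you close the loop with the direct first-order convexity inequality
\begin{equation*}
f(\bm{x})-f(\bm{x}^*)\geq\sum_{i=1}^{n}\nabla f_i(x_i^*)^T(x_i-x_i^*)=(\mu^*)^T\sum_{i=1}^{n}(x_i-x_i^*)=0
\end{equation*}
for every feasible $\bm{x}$. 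This is worth having: the proposition asserts the sufficiency direction ("$\bm{x}^*$ is optimal \emph{if} (i) and (ii) hold"), and the paper's proof, as written, reads more like a derivation of necessary conditions at an optimum than a verification that (i)–(ii) imply optimality; your one-line argument makes the sufficiency explicit and self-contained, using only convexity (Assumption~\ref{convex}) and no constraint-qualification machinery. Your remark about the stacked versus per-agent notation is also apt, since the paper's $\nabla\bm{f}(\bm{x}^*)=\bm{1}(\mu^*)^T$ is exactly the row-stacking of $\nabla f_i(x_i^*)=\mu^*$.
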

\begin{proof}
Define the Lagrange function 
\begin{align}
L(\bm{x}, {\mu})=\sum\limits_{i=1}^nf_i({x}_i)+{\mu}\left(\sum_{i=1}^{n}{x}_i-\sum_{i=1}^{n}{d}_i\right),
\end{align}
where ${\mu}\in\mathbb{R}^u$ is the Lagrange multiplier. 
By the KKT conditions \cite{60}, we obtain that 
\begin{align}
\nabla f_i({x}_i^*)-{\mu}^*=0, \forall i, \ \ \sum_{i=1}^{n}{x}_i^*-\sum_{i=1}^{n}{d}_i=0.\label{opcon2}
\end{align}
Then, we obtain that
\begin{align}
\nabla\bm{f}(\bm{x}^*)=\left[\nabla f_1({x}^*_1),...,\nabla f_n({x}^*_n)\right]^T=\bm{1}({\mu}^{*})^T.\label{opconproof1}
\end{align}
Eqs.\eqref{opcon2} and \eqref{opconproof1} verify the conditions (i) and (ii), respectively, completing the proof.
\end{proof}
Proposition \ref{optimalcondition} implies that the optimal point is achieved when all the states are feasible and simultaneously the gradients of all agents' functions are equal, i.e., achieving consensus on marginal costs while keeping the balance between supply and demand. 
\subsection{Distributed deviation-tracking method}
To solve the problem \eqref{problem1}, we develop a distributed resource allocation algorithm by combining weighted gradient method and deviation-tracking scheme. 

First, we adopt weighted gradient method to ensure that the convergence point satisfies Condition (ii) of Proposition 1. The update rule is shown as 
\begin{align}
x_i(k+1)=x_i(k)-\beta\sum_{j=1}^{n}w_{ij}(k)(\nabla f_i(x_i(k))-\nabla f_j(x_j(k))).\label{dt11}
\end{align}
This rule is also adopted in other distributed resource allocation problems over fixed networks \cite{46,47}.
For the Condition (i) of Proposition 1, we consider the following iteration.
\begin{align}
x_i(k+1)=&x_i(k)-\frac{\alpha}{n}\sum_{j=1}^{n}(x_j(k)-d_j).\label{dt12}
\end{align}
It is clear that if $x_i(k)$ converges, the deviation $\sum_{j=1}^{n}(x_j(k)-d_j)$ converges to 0, which is equivalent to Condition (i) of Proposition 1.
However, it is noted that the variable $\frac{1}{n}\sum_{j=1}^{n}(x_j(k)-d_j)$ requires all agents' states at each iteration, which is a global variable that cannot be obtained by each agent. We propose a deviation-tracking method to decentralize the global deviation $\frac{1}{n}\sum_{j=1}^{n}(x_j(k)-d_j)$ by introducing an auxiliary variable $y_i(k)$ to track it.
We attempt to use consensus protocol to ensure $y_i(k)$ tracks $\sum_{i=1}^{n}{x}(k)-{d}_i$. 
But based on consensus protocol, the sum of $\sum_{i=1}^{n}{y}_j(k)$ is constant for all $k$.
Now that $\frac{1}{n}\sum_{j=1}^{n}{x}_j(k)$ is time-varying, so we add a compensation term to track the value $\frac{1}{n}\sum_{j=1}^{n}{x}_j(k)$.
Starting from ${y}_i(0)={x}_i(0)-{d}_i$, the update rule of agent $i$ is expressed as follows.
\begin{equation}
\begin{split}
{x}_i(k+1)=&{x}_i(k)-\alpha{y}_i(k)
\\&-\beta\sum_{j=1}^{n}w_{ij}(k)(\nabla f_i(x_i(k))-\nabla f_j(x_j(k)))
\\ {y}_i(k+1)=&\sum_{j=1}^{n}w_{ij}(k){y}_j(k)+{x}_i(k+1)-{x}_i(k),
\end{split}
\label{statetrack}
\end{equation}
where $\alpha$ and $\beta$ are constant stepsizes. ${y}_i(k+1)$ is the auxiliary variable tracking the global deviation $\frac{1}{n}\sum_{j=1}^{n}(x_j(k)-d_j)$.

The algorithm is shown in Algorithm 1 for details.
\begin{algorithm}[h]
	\caption{: \textbf{Distributed Deviation-tracking Resource Allocation Algorithm}}
	\hspace*{0.02in} {\bf Input:}
	$\bm{W}, f_i, \alpha, \beta, {d}_{i}$\\
	\hspace*{0.02in} {\bf Output:}
	${x}_i,{y}_i$
	\begin{algorithmic}[1]
		\State \textbf{Initialization:} \small{Pick any ${x}_i(0)\in\mathbb{R}^u,\ {y}_i(0)={x}_i(0)-{d}_i$}
		\For{$k=0:\infty$}
		\State Broadcast $\nabla{f}_i({x}_i(k))$ and ${y}_i(k)$ to neighbors;
		\State Receive $\nabla{f}_j({x}_j(k))$ and ${y}_j(k), j\in \mathcal{N}_i$ from neighbors;
		\State Update $x_i(k+1)$ through 
\State${x}_i(k+1)={x}_i(k)-\alpha{y}_i(k)$
\begin{flushright}$-\beta\Big[\nabla f_i({x}_i(k))-\sum_{j=1}^{n}w_{ij}(k)\nabla f_j({x}_j(k))\Big];$\end{flushright}
		\State Update $y_i(k+1)$ through
		\begin{center}${y}_i(k+1)=\sum_{j=1}^{n}w_{ij}(k){y}_j(k)+{x}_i(k+1)-{x}_i(k);$\end{center}
	
		\EndFor
		\State \textbf{end for}
	\end{algorithmic}
\end{algorithm}

Let  
\begin{align}
\nabla\bm{f}(\bm{x}(k))=\left[\nabla f_{1}(\bm{x}_1(k)),...,\nabla f_{n}(\bm{x}_n(k))\right]^T\in\mathbb{R}^{n\times u}, \forall k.
\notag
\end{align}
Then, the algorithm can be rewritten in a matrix form:
\begin{align}
&\bm{y}(0)=\bm{x}(0)-\bm{d}\label{al0},
\\&\bm{x}(k+1)=\bm{x}(k)-\alpha\bm{y}(k)-\beta(\bm{I}-\bm{W}(k))\nabla\bm{f}(\bm{x}(k))\label{al1},
\\&\bm{y}(k+1)=\bm{W}(k)\bm{y}(k)+\bm{x}(k+1)-\bm{x}(k)\label{al2}.
\end{align}

To intuitionally describe the algorithm of deviation-tracking method, we provide the block diagram of our algorithm. 
In Algorithm 1, we adopt two feedback loops to ensure the optimal convergence. As shown in Fig. 1, we first use the $(\bm{I}-\bm{W}(k))\nabla\bm{f}(\bm{x}(k))$ to guarantee that the states converge with equal gradients because $\nabla f_i(x_i(k))=\nabla f_j(x_j(k)), \forall i,j$ iff
$(\bm{I}-\bm{W}(k))\nabla\bm{f}(\bm{x}(k))=0$.
Then, we introduce $\bm{y}(k)$ to track the deviation $\bm{1}^T\bm{x}(k)-\bm{1}^T\bm{d}$ and eliminate it by adjusting the state with $\bm{y}(k)$ to ensure the convergence point is feasible.
By adopting two variables feedback to lead the state to the point satisfying the Conditions (i) and (ii) in of Proposition \ref{optimalcondition}, the algorithm can converge to optimal solution even with infeasible initialization and disturbance on states, which will be shown in Section V.

\begin{figure}
		\centering
		\includegraphics[scale=0.4]{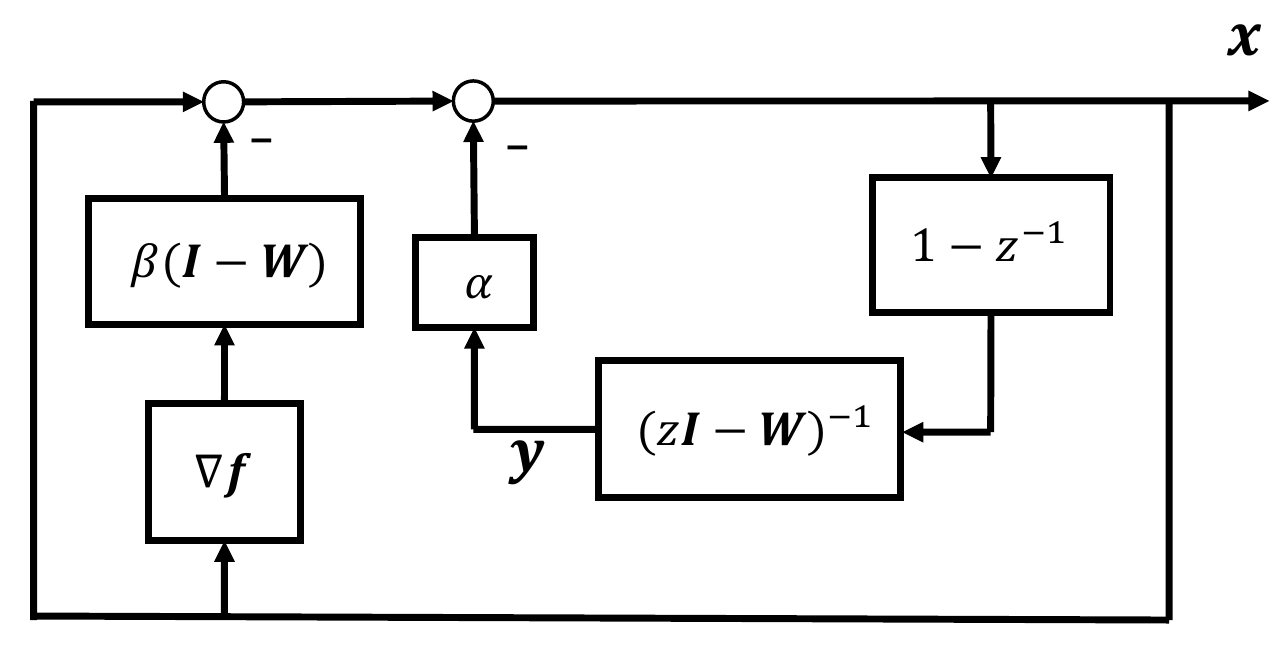}
\caption{Block diagram of Algorithm 1.} 
\end{figure}

It is worth noting that we adopt two different constant stepsizes, $\alpha$ and $\beta$ in \eqref{al0}-\eqref{al2}, instead of decaying stepsize that used in \cite{49} and \cite{50}. In \cite{49}, the stepsize $\alpha(k)$ is required to satisfy $\sum_{k=1}^{\infty}\alpha(k)=\infty$ and $\sum_{k=1}^{\infty}(\alpha(k))^2<\infty$, which results in convergence rate of $\mathcal{O}(\ln(k)/\sqrt{k})$. Similar assumptions about stepsize are also imposed in \cite{50}. Although the algorithm proposed in \cite{50} converges linearly, it is only suitable for fixed networks and there is always a steady state error between the convergence point and the optimal solution. In next section, we will show that the proposed algorithm converges to the optimal solution exactly at a linear convergence rate with constant stepsize even over stochastic networks.

\section{Convergence analysis over stochastic network}
In this section, we will show that the proposed Algorithm 1 converges linearly to the optimal solution in mean square over stochastic communication networks where each communication link fails randomly. 
Without loss of generality, we consider the situation when $u=1$. Extension to the case of $u>1$ is straightforward.
\begin{lem}
Let $\bm{\Gamma}=diag\{\gamma_1,...,\gamma_n\}$ and $\bm{T}=\bm{L}\bm{\Gamma}(\bm{I}-\bm{A})$, where $\eta_i\leq\gamma_i\leq\varphi_i, \forall i$. Suppose Assumptions 1-2 hold and $\bm{A}$ is doubly stochastic. For any vector $\bm{v}$ such that $\bm{1}^T\bm{v}=0$ and $\bm{v}\neq\bm{0}$, we have
\begin{align}
\frac{\underline{\eta}(1-\lambda_2(\bm{A}))\left[\overline{\varphi}+(n-1)\underline{\eta}\right]}{\sqrt{n\left[\overline{\varphi}^2+(n-1)\underline{\eta}^2\right]}}&\leq\frac{\Vert\bm{T}\bm{v}\Vert_2}{\Vert\bm{v}\Vert_2}\leq\overline{\varphi}(1-{\lambda}_n(\bm{A})),\label{lem1p1}
\end{align}
where $\underline{\eta}=\min_i\{\eta_i\}$, $\overline{\varphi}=\max_i\{\varphi_i\}$.
\end{lem}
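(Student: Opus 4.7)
The plan is to factor $\bm{T}\bm{v}=\bm{L}\bm{\Gamma}(\bm{I}-\bm{A})\bm{v}$ and bound the product via the spectral structure of $\bm{A}$ and $\bm{L}$. Throughout, $\bm{L}=\bm{I}-\frac{\bm{11}^T}{n}$ is the orthogonal projector onto $\bm{1}^\perp$, so $\Vert\bm{L}\Vert_2=1$; also, because $\bm{A}$ is symmetric doubly stochastic, $\bm{1}$ is an eigenvector with eigenvalue~$1$, and on $\bm{1}^\perp$ the matrix $\bm{A}$ acts with spectrum $\{\lambda_n(\bm{A}),\ldots,\lambda_2(\bm{A})\}$. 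The hypothesis $\bm{1}^T\bm{v}=0$ places $\bm{v}$ inside this invariant subspace, which is where the relevant spectral bounds of $\bm{I}-\bm{A}$ apply.

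For the upper bound I would simply use submultiplicativity, $\Vert\bm{T}\bm{v}\Vert_2\le\Vert\bm{L}\Vert_2\,\Vert\bm{\Gamma}\Vert_2\,\Vert(\bm{I}-\bm{A})\bm{v}\Vert_2$, together with $\Vert\bm{L}\Vert_2=1$, $\Vert\bm{\Gamma}\Vert_2=\max_i\gamma_i\le\overline{\varphi}$, and the fact that on $\bm{1}^\perp$ the operator $\bm{I}-\bm{A}$ has eigenvalues $\{1-\lambda_i(\bm{A})\}_{i=2}^n\subset(0,2)$, whose maximum is $1-\lambda_n(\bm{A})$. This delivers $\Vert\bm{T}\bm{v}\Vert_2\le\overline{\varphi}(1-\lambda_n(\bm{A}))\Vert\bm{v}\Vert_2$.

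For the lower bound I would set $\bm{u}:=(\bm{I}-\bm{A})\bm{v}$ and verify two facts. First, $\bm{1}^T\bm{u}=\bm{1}^T(\bm{I}-\bm{A})\bm{v}=0$ because $\bm{A}$ is doubly stochastic, so $\bm{u}$ also lies in $\bm{1}^\perp$. Second, by the same spectral decomposition of $\bm{A}$ on $\bm{1}^\perp$, the smallest eigenvalue of $\bm{I}-\bm{A}$ there equals $1-\lambda_2(\bm{A})$, yielding $\Vert\bm{u}\Vert_2\ge(1-\lambda_2(\bm{A}))\Vert\bm{v}\Vert_2$. It then suffices to prove the purely deterministic mean-zero inequality
\[
\Vert\bm{L}\bm{\Gamma}\bm{u}\Vert_2\ge\frac{\underline{\eta}\,[\overline{\varphi}+(n-1)\underline{\eta}]}{\sqrt{n\,[\overline{\varphi}^2+(n-1)\underline{\eta}^2]}}\Vert\bm{u}\Vert_2
\]
for every $\bm{u}$ with $\bm{1}^T\bm{u}=0$, uniformly over $\bm{\gamma}=(\gamma_1,\ldots,\gamma_n)\in[\underline{\eta},\overline{\varphi}]^n$, and multiplying with the $(1-\lambda_2(\bm{A}))$ factor finishes the lemma.

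The main obstacle is this mean-zero estimate. Using the identity
\[
\Vert\bm{L}\bm{\Gamma}\bm{u}\Vert_2^{\,2}=\sum_{i=1}^n\gamma_i^2 u_i^2-\frac{1}{n}\Bigl(\sum_{i=1}^n\gamma_i u_i\Bigr)^{\!2}=\bm{u}^T\Bigl(\bm{\Gamma}^2-\tfrac{1}{n}\bm{\gamma}\bm{\gamma}^T\Bigr)\bm{u},
\]
the task reduces to a uniform lower bound, over the box $\bm{\gamma}\in[\underline{\eta},\overline{\varphi}]^n$, on the smallest eigenvalue of the PSD matrix $\bm{\Gamma}^2-\frac{1}{n}\bm{\gamma}\bm{\gamma}^T$ restricted to $\bm{1}^\perp$. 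I would attack this via a joint KKT analysis in $(\bm{u},\bm{\gamma})$: since the objective is a rank-one perturbation of a diagonal matrix, separately monotone in each $\gamma_i$, the extremal configuration collapses to $\bm{\gamma}$ having one coordinate at $\underline{\eta}$ and the other $n-1$ at $\overline{\varphi}$, with $\bm{u}$ concentrated on the small-$\gamma$ coordinate (i.e.\ $u_1=-\sqrt{(n-1)/n}$, $u_i=1/\sqrt{n(n-1)}$ for $i\ge 2$). Direct substitution produces the sums $\sum_i\gamma_i=\overline{\varphi}+(n-1)\underline{\eta}$ in the numerator and $\sum_i\gamma_i^2=\overline{\varphi}^2+(n-1)\underline{\eta}^2$ in the denominator, and a concluding Cauchy--Schwarz step invoking $\Vert\bm{\Gamma}\bm{u}\Vert_2\ge\underline{\eta}\Vert\bm{u}\Vert_2$ contributes the leading $\underline{\eta}$ factor, yielding exactly the claimed $\kappa$.
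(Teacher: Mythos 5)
Your upper bound and the outer structure of your lower bound are fine and coincide with the paper's: submultiplicativity gives $\Vert\bm{T}\bm{v}\Vert_2\le\overline{\varphi}(1-\lambda_n(\bm{A}))\Vert\bm{v}\Vert_2$, and setting $\bm{u}=(\bm{I}-\bm{A})\bm{v}$ with $\bm{1}^T\bm{u}=0$ and $\Vert\bm{u}\Vert_2\ge(1-\lambda_2(\bm{A}))\Vert\bm{v}\Vert_2$ is exactly the paper's reduction. The genuine gap is in the step you yourself flag as the main obstacle, the uniform mean-zero estimate on $\Vert\bm{L}\bm{\Gamma}\bm{u}\Vert_2$: your sketch of it does not go through. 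First, the asserted monotonicity is false: for fixed $\bm{u}$ the form $\bm{u}^T(\bm{\Gamma}^2-\tfrac1n\bm{\gamma}\bm{\gamma}^T)\bm{u}$ is a convex quadratic in each $\gamma_i$, not monotone, so the minimum over the box need not sit at a vertex, and the claimed ``collapse'' to one coordinate at $\underline{\eta}$ and $n-1$ at $\overline{\varphi}$ is unproven. Second, your description is internally inconsistent: that configuration has $\sum_i\gamma_i=\underline{\eta}+(n-1)\overline{\varphi}$ and $\sum_i\gamma_i^2=\underline{\eta}^2+(n-1)\overline{\varphi}^2$, not the sums you claim to read off. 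Third, and most important, even substituting your stated $(\bm{u},\bm{\gamma})$ into your own identity gives $\Vert\bm{L}\bm{\Gamma}\bm{u}\Vert_2^2=\bigl[\overline{\varphi}+(n-1)\underline{\eta}\bigr]^2/n^2$, which is strictly larger than the target constant $\kappa^2=\underline{\eta}^2[\overline{\varphi}+(n-1)\underline{\eta}]^2/\bigl(n[\overline{\varphi}^2+(n-1)\underline{\eta}^2]\bigr)$ whenever $\overline{\varphi}>\underline{\eta}$. The constant $\kappa$ is not the extremal value of your box-constrained eigenvalue problem, so no ``direct substitution at the extremal configuration'' can produce it, and the concluding remark that a Cauchy--Schwarz step with $\Vert\bm{\Gamma}\bm{u}\Vert_2\ge\underline{\eta}\Vert\bm{u}\Vert_2$ ``contributes the leading $\underline{\eta}$'' has no place once you have already fixed a configuration and computed its value; as written it is not an argument.

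For contrast, the paper never solves the extremal problem you set up. It factors $\Vert\bm{L}\bm{\Gamma}\bm{z}\Vert_2/\Vert\bm{z}\Vert_2=\bigl(\Vert\bm{L}\bm{\Gamma}\bm{z}\Vert_2/\Vert\bm{\Gamma}\bm{z}\Vert_2\bigr)\cdot\bigl(\Vert\bm{\Gamma}\bm{z}\Vert_2/\Vert\bm{z}\Vert_2\bigr)$, bounds the second factor by $\underline{\eta}$, and bounds the first via Pythagoras, $\Vert\bm{L}\bm{\Gamma}\bm{z}\Vert_2^2=\Vert\bm{\Gamma}\bm{z}\Vert_2^2-\Vert\tfrac{\bm{11}^T}{n}\bm{\Gamma}\bm{z}\Vert_2^2$, combined with a sign-splitting and per-block Cauchy--Schwarz estimate of $(\bm{1}^T\bm{\Gamma}\bm{z})^2/\Vert\bm{\Gamma}\bm{z}\Vert_2^2$ that exploits $\bm{1}^T\bm{z}=0$; that chaining is precisely where the combination $[\overline{\varphi}+(n-1)\underline{\eta}]/\sqrt{n[\overline{\varphi}^2+(n-1)\underline{\eta}^2]}$ comes from, with the extra $\underline{\eta}$ multiplied on afterwards. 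To complete your route you would have to either reproduce this chaining or rigorously solve (or lower-bound) the minimum of $\lambda_{\min}\bigl((\bm{\Gamma}^2-\tfrac1n\bm{\gamma}\bm{\gamma}^T)\big|_{\bm{1}^\perp}\bigr)$ over the box and then verify it dominates $\kappa^2$ --- a substantially harder task than the KKT shortcut you describe, and one your proposal does not carry out.
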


\begin{lem}\label{lem1}
Let $K_1=\frac{\underline{\eta}(1-\bar{\lambda}_{2e})\left[\overline{\varphi}+(n-1)\underline{\eta}\right]}{\sqrt{n\left[\overline{\varphi}^2+(n-1)\underline{\eta}^2\right]}}$, $K_2=\overline{\varphi}(1-\underline{\lambda}_{ne})$ and $\bm{T}_s=\bm{L}\bm{\Gamma}(\bm{I}-\bm{W})$,
where $\bm{W}$ is a stochastic matrix satisfying \eqref{doublestochastic} and \eqref{con1}. For any vector $\bm{v}$ such that $\bm{1}^T\bm{v}=0$ and $\bm{v}\neq\bm{0}$, we have
\begin{align}
\Vert(\bm{I}-\beta\bm{T}_s)\bm{v}\Vert_E^2\leq (1+\beta^2K_2'^2-2\beta K_1')\Vert\bm{v}\Vert_E^2.\notag
\end{align}
\end{lem}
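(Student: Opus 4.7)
The plan is to square-expand $\Vert(\bm{I}-\beta\bm{T}_s)\bm{v}\Vert_2^2=\Vert\bm{v}\Vert_2^2-2\beta\,\bm{v}^T\bm{T}_s\bm{v}+\beta^2\Vert\bm{T}_s\bm{v}\Vert_2^2$ and take expectations, so that the statement reduces to establishing the two inequalities $\mathbb{E}\{\bm{v}^T\bm{T}_s\bm{v}\}\geq K_1\Vert\bm{v}\Vert_E^2$ and $\mathbb{E}\{\Vert\bm{T}_s\bm{v}\Vert_2^2\}\leq K_2^2\Vert\bm{v}\Vert_E^2$. Both pieces will be obtained by applying the preceding lemma, either pathwise with $\bm{A}=\bm{W}(k)$ or on the averaged matrix with $\bm{A}=\mathbb{E}\{\bm{W}(k)\}$, exploiting that each realization of $\bm{W}$ is doubly stochastic and that $\mathbb{E}\{\bm{W}\}$ inherits symmetry and double stochasticity from Assumption \ref{network}.

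For the quadratic term, which is the easier one, I would apply the upper-bound half of the preceding lemma sample-wise: since $\bm{1}^T\bm{v}=0$ and $\bm{W}$ is doubly stochastic, $\Vert\bm{T}_s\bm{v}\Vert_2\leq\overline{\varphi}(1-\lambda_n(\bm{W}))\Vert\bm{v}\Vert_2$ for every realization. Invoking the worst-case spectral bound (and the relation between $\underline{\lambda}_n$ and $\underline{\lambda}_{ne}$ used to define $K_2$) yields $\Vert\bm{T}_s\bm{v}\Vert_2\leq K_2\Vert\bm{v}\Vert_2$, and squaring and averaging delivers the stated upper bound.

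For the cross term, the key algebraic simplification is that $\bm{1}^T\bm{v}=0$ implies $\bm{L}\bm{v}=\bm{v}$ and $\bm{v}^T\bm{L}=\bm{v}^T$, so $\bm{v}^T\bm{T}_s\bm{v}=\bm{v}^T\bm{\Gamma}(\bm{I}-\bm{W})\bm{v}$. Taking the expectation and re-inserting $\bm{L}$ on the left, I obtain $\mathbb{E}\{\bm{v}^T\bm{T}_s\bm{v}\}=\bm{v}^T\bm{T}_e\bm{v}$, where the deterministic matrix $\bm{T}_e\triangleq\bm{L}\bm{\Gamma}(\bm{I}-\mathbb{E}\{\bm{W}\})$ fits the hypothesis of the preceding lemma with $\bm{A}=\mathbb{E}\{\bm{W}\}$. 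Since $\lambda_2(\mathbb{E}\{\bm{W}(k)\})\leq\bar{\lambda}_{2e}$, the lower-bound half of the lemma applied to $\bm{T}_e$ produces exactly the constant $K_1$ defined in the statement.

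The main obstacle is a type mismatch between what the preceding lemma gives and what the cross term needs: the lemma bounds the ratio $\Vert\bm{T}\bm{v}\Vert_2/\Vert\bm{v}\Vert_2$ from below, whereas here I require a lower bound on the quadratic form $\bm{v}^T\bm{T}_e\bm{v}/\Vert\bm{v}\Vert_2^2$. I expect the proof of that lemma to actually produce $\bm{v}^T\bm{T}\bm{v}\geq K_1\Vert\bm{v}\Vert_2^2$ as an intermediate step --- by studying the symmetric part $(\bm{T}+\bm{T}^T)/2$ restricted to $\bm{1}^\perp$ and identifying the extremal configuration of $\bm{\Gamma}$ subject to $\eta_i\leq\gamma_i\leq\varphi_i$ --- with the stated norm bound following as a Cauchy--Schwarz corollary. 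If instead only the norm form is available, a short supplementary spectral argument on the symmetric part of $\bm{\Gamma}(\bm{I}-\mathbb{E}\{\bm{W}\})$ on $\bm{1}^\perp$ would be needed to reproduce the coefficient $K_1$. With that intermediate bound in hand, the three estimates combine to yield $\Vert(\bm{I}-\beta\bm{T}_s)\bm{v}\Vert_E^2\leq(1+\beta^2K_2^2-2\beta K_1)\Vert\bm{v}\Vert_E^2$, as required.
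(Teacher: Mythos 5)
Your overall architecture coincides with the paper's: expand $\Vert(\bm{I}-\beta\bm{T}_s)\bm{v}\Vert_E^2=\Vert\bm{v}\Vert_E^2+\beta^2\mathbb{E}\{\Vert\bm{T}_s\bm{v}\Vert_2^2\}-2\beta\,\mathbb{E}\{\bm{v}^T\bm{T}_s\bm{v}\}$, bound the quadratic term pathwise by $\rho(\bm{L})\rho(\bm{\Gamma})\rho(\bm{I}-\bm{W})\leq\overline{\varphi}(1-\underline{\lambda}_n)$, and reduce the cross term, using independence of $\bm{W}(k)$ from $\bm{v}$, to a lower bound on the quadratic form of the deterministic matrix $\bm{T}_e=\bm{L}\bm{\Gamma}(\bm{I}-\mathbb{E}\{\bm{W}\})$ on $\bm{1}^\perp$. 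Up to that point you are on the paper's track.

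The gap is exactly at the step you flag and then defer: neither of your two proposed resolutions works as stated. First, the paper's Lemma 1 does \emph{not} establish $\bm{v}^T\bm{T}\bm{v}\geq K_1\Vert\bm{v}\Vert_2^2$ as an intermediate step; its proof bounds $\Vert\bm{L}\bm{\Gamma}(\bm{I}-\bm{A})\bm{v}\Vert_2/\Vert\bm{v}\Vert_2$ by controlling the projection of $\bm{\Gamma}(\bm{I}-\bm{A})\bm{v}$ onto $\mathrm{span}\{\bm{1}\}$, so only the norm-ratio bound is available. Second, your fallback via the symmetric part of $\bm{\Gamma}(\bm{I}-\mathbb{E}\{\bm{W}\})$ on $\bm{1}^\perp$ is not a short spectral argument: the symmetric part of a product of a positive diagonal matrix and a PSD matrix can be indefinite (e.g. $\bm{\Gamma}=\mathrm{diag}(1,100)$ against $\bm{11}^T$), so a lower bound of the quadratic form by $K_1$ does not follow from the norm bound alone; an extra structural ingredient is required. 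The paper supplies it as follows: since $\bm{L}$ and $\bm{I}-\mathbb{E}\{\bm{W}\}$ are PSD and $\bm{\Gamma}$ is positive definite, $\bm{T}_e$ has nonnegative eigenvalues, so for sufficiently small $\iota>0$ one has $\Vert(\bm{I}-\iota\bm{T}_e)\bm{v}\Vert_2\leq(1-\iota K_1)\Vert\bm{v}\Vert_2$ on $\bm{1}^\perp$; expanding the left side as $\Vert\bm{v}\Vert_2^2+\iota^2\Vert\bm{T}_e\bm{v}\Vert_2^2-2\iota\,\bm{v}^T\bm{T}_e\bm{v}\geq(1+\iota^2K_1^2)\Vert\bm{v}\Vert_2^2-2\iota\,\bm{v}^T\bm{T}_e\bm{v}$ and comparing with $(1-\iota K_1)^2\Vert\bm{v}\Vert_2^2$ cancels the $\iota^2$ terms and yields $\bm{v}^T\bm{T}_e\bm{v}\geq K_1\Vert\bm{v}\Vert_2^2$, after which taking expectations finishes the proof. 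Without this (or an equivalent) conversion argument, your proposal leaves the crux of the lemma unproved.
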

\begin{lem}\label{lem1}
Suppose Assumption \ref{network} holds, we have
\begin{align}
\rho\left(\mathbb{E}\{(\bm{W}(k))^2\}-\frac{\bm{11}^T}{n}\right)<1.\label{lemm3}
\end{align}
\end{lem}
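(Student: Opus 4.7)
The plan is to exploit the algebraic identity $\bm{W}(k)^2-\bm{J}=(\bm{W}(k)-\bm{J})^2$, where $\bm{J}:=\frac{\bm{11}^T}{n}$. This identity follows from $\bm{J}\bm{W}(k)=\bm{W}(k)\bm{J}=\bm{J}^2=\bm{J}$, which uses only the double stochasticity of $\bm{W}(k)$ from Assumption~\ref{network}. Taking expectations, the matrix $\mathbb{E}\{\bm{W}(k)^2\}-\bm{J}=\mathbb{E}\{(\bm{W}(k)-\bm{J})^2\}$ is symmetric, positive semidefinite, and annihilates $\bm{1}$. Because it is symmetric, its spectral radius equals its largest eigenvalue, which can be written as
\[
\rho\bigl(\mathbb{E}\{\bm{W}(k)^2\}-\bm{J}\bigr)=\max_{\bm{u}\perp\bm{1},\,\|\bm{u}\|_2=1}\mathbb{E}\bigl\{\|\bm{W}(k)\bm{u}\|_2^2\bigr\},
\]
using that $(\bm{W}(k)-\bm{J})\bm{u}=\bm{W}(k)\bm{u}$ whenever $\bm{u}\perp\bm{1}$, since then $\bm{J}\bm{u}=\bm{0}$.

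The easy direction is $\mathbb{E}\{\|\bm{W}(k)\bm{u}\|_2^2\}\le 1$ for every such unit $\bm{u}$, because each realization of $\bm{W}(k)$ is symmetric doubly stochastic and hence $\|\bm{W}(k)\|_2=1$. The hard part is upgrading this to a strict inequality. I would argue by contradiction: suppose some unit vector $\bm{u}\perp\bm{1}$ achieves $\mathbb{E}\{\|\bm{W}(k)\bm{u}\|_2^2\}=1$. Since $\|\bm{W}(k)\bm{u}\|_2\le 1$ almost surely, mean-equals-upper-bound forces $\|\bm{W}(k)\bm{u}\|_2=1$ almost surely. Expanding $\bm{u}=\sum_i c_i\bm{v}_i$ in an orthonormal eigenbasis of $\bm{W}(k)$ with eigenvalues $\lambda_i\in[-1,1]$, the identity $\sum_i c_i^2\lambda_i^2=\sum_i c_i^2=1$ forces $\lambda_i=\pm 1$ whenever $c_i\ne 0$.

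At this point I would invoke the observation made just before Section~III that $\underline{\lambda}_n=\inf_k\lambda_n(\bm{W}(k))>-1$, which follows from $w_{ii}(k)>0$ together with double stochasticity. This rules out $\lambda_i=-1$, leaving only $\lambda_i=1$, hence $\bm{W}(k)\bm{u}=\bm{u}$ almost surely. Taking expectation yields $\mathbb{E}\{\bm{W}(k)\}\bm{u}=\bm{u}$, and combining with $\bm{J}\bm{u}=\bm{0}$ gives $(\mathbb{E}\{\bm{W}(k)\}-\bm{J})\bm{u}=\bm{u}$. Therefore $\rho(\mathbb{E}\{\bm{W}(k)\}-\bm{J})\ge 1$, contradicting \eqref{con1} of Assumption~\ref{network}. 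The main obstacle is exactly this step: promoting an expectation-level equality to almost-sure information about $\bm{W}(k)\bm{u}$ and then to a deterministic eigenvalue statement. The two essential ingredients are the pointwise contraction $\|\bm{W}(k)\bm{u}\|_2\le 1$, which upgrades ``mean equals one'' to ``value equals one a.s.,'' and the strict bound $\underline{\lambda}_n>-1$, without which $\bm{u}$ could mix $+1$ and $-1$ eigendirections of $\bm{W}(k)$ differently across realizations and block the reduction to $\mathbb{E}\{\bm{W}(k)\}\bm{u}=\bm{u}$.
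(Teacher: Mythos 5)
Your proposal is correct, but it proves the lemma by a genuinely different route than the paper. The paper's own proof is combinatorial: writing $w'_{ij}(k)=\sum_{t}w_{it}(k)w_{tj}(k)$ for the entries of $\bm{W}(k)^2$, it notes $\mathbb{P}\{w'_{ij}(k)>0\}\geq\mathbb{P}\{w_{ii}(k)w_{ij}(k)>0\}=\mathbb{P}\{w_{ij}(k)>0\}$ (using the almost-sure positivity of the self-weights), so the edge set of the ``graph in mean'' of $\bm{W}(k)^2$ contains that of $\bm{W}(k)$ and is therefore connected; the conclusion \eqref{lemm3} then follows from the Perron--Frobenius-type fact (left implicit in the paper) that $\mathbb{E}\{\bm{W}(k)^2\}$ is symmetric, doubly stochastic, with positive diagonal and connected support, hence all its eigenvalues other than the Perron eigenvalue lie strictly inside $(-1,1)$. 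You instead avoid graph reasoning entirely: the identity $\bm{W}^2-\frac{\bm{11}^T}{n}=(\bm{W}-\frac{\bm{11}^T}{n})^2$, the variational characterization of the top eigenvalue of the PSD matrix $\mathbb{E}\{(\bm{W}-\frac{\bm{11}^T}{n})^2\}$, and the rigidity argument (mean equal to an almost-sure upper bound forces $\Vert\bm{W}(k)\bm{u}\Vert_2=1$ a.s., then $\lambda_n(\bm{W}(k))>-1$ from the positive diagonal forces $\bm{W}(k)\bm{u}=\bm{u}$ a.s.) reduce the claim directly to a contradiction with \eqref{con1}. Both proofs hinge on the same two ingredients, $w_{ii}(k)>0$ and connectivity in mean, but your argument is the more self-contained one: it makes explicit the spectral step that the paper compresses into ``Then, we can obtain \eqref{lemm3},'' at the price of the contradiction/rigidity machinery, whereas the paper's containment-of-supports argument is shorter and exposes the structural fact that squaring can only add communication edges. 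One small point: you only need $\lambda_n(\bm{W}(k))>-1$ per realization (which follows from strict diagonal dominance of $\bm{I}+\bm{W}(k)$ given $w_{ii}(k)>0$), not the uniform infimum $\underline{\lambda}_n>-1$ that the paper asserts, so your invocation of that remark is safe.
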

Proofs of Lemmas 1, 2 and 3 are shown in Appendixes A, B and C, respectively.
\begin{thm}\label{nconsto}
Let $\overline{\lambda}_{2s}=\max_{k}\{\lambda_2(\mathbb{E}\{(\bm{W}(k))^2)\}$, $s_1=\alpha^2+2\alpha+\overline{\lambda}_{2s}$ and $s_2=\sqrt{1+\beta^2K_2^2-2\beta K_1}$. 
Consider the sequence $\{\bm{x}(k)\}_{k=0}^{\infty}$ and $\{\bm{y}(k)\}_{k=0}^{\infty}$ generated by Algorithm 1. Suppose Assumptions \ref{lip}-\ref{network} hold. For any $\bm{x}(0)\in\mathbb{R}^{n}$ and $(\alpha,\beta)$ satisfying
\begin{equation}
\begin{split}
&\alpha<\sqrt{2-\overline{\lambda}_{2s}}-1, \ \beta<\frac{2K_1}{K_2^2},
\\&\alpha\beta\overline{\varphi}(1-\underline{\lambda}_n)<(1-s_1)(1-s_2),
\end{split}
\label{nafarange}
\end{equation}
there exist $m_{1s}, m_{2s}>0$ and $q_s\in(0,1)$ such that for any $k\geq0$,
\begin{align}
&\Vert\bm{y}(k)\Vert_E\leq m_{1s}q_s^k\label{nres2},
\\&\Vert\nabla\check{\bm{f}}(\bm{x}(k))\Vert_E\leq m_{2s}q_s^k\label{nres3},
\end{align}
where $\nabla\check{\bm{f}}(\bm{x}(k))= \bm{L}\nabla\bm{f}(\bm{x}(k))$.
\end{thm}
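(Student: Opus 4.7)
The plan is to establish a coupled linear recursion of the form
\begin{equation*}
\begin{pmatrix}\|\bm{y}(k+1)\|_E\\\|\nabla\check{\bm{f}}(\bm{x}(k+1))\|_E\end{pmatrix}\preceq M\begin{pmatrix}\|\bm{y}(k)\|_E\\\|\nabla\check{\bm{f}}(\bm{x}(k))\|_E\end{pmatrix}
\end{equation*}
for a fixed nonnegative $2\times 2$ matrix $M$, verify that the stepsize restrictions in \eqref{nafarange} force $\rho(M)=q_s<1$, and then iterate to obtain the claimed geometric decay.

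For the first row of $M$, I would start by rewriting the $\bm{y}$-update. Because $\bm{W}(k)$ is doubly stochastic, $\bm{1}^T(\bm{I}-\bm{W}(k))=\bm{0}$ and thus $(\bm{I}-\bm{W}(k))\nabla\bm{f}(\bm{x}(k))=(\bm{I}-\bm{W}(k))\nabla\check{\bm{f}}(\bm{x}(k))$. Substituting the $\bm{x}$-difference into \eqref{al2} yields $\bm{y}(k+1)=(\bm{W}(k)-\alpha\bm{I})\bm{y}(k)-\beta(\bm{I}-\bm{W}(k))\nabla\check{\bm{f}}(\bm{x}(k))$. I would then condition on the $\sigma$-algebra generated by $\bm{W}(0),\dots,\bm{W}(k-1)$, use the i.i.d.\ assumption so that $\bm{W}(k)$ is independent of $\bm{y}(k)$ and $\nabla\check{\bm{f}}(\bm{x}(k))$, and expand $\mathbb{E}\|(\bm{W}(k)-\alpha\bm{I})\bm{y}(k)\|_2^2=\bm{y}(k)^T(\mathbb{E}[\bm{W}(k)^2]-2\alpha\mathbb{E}[\bm{W}(k)]+\alpha^2\bm{I})\bm{y}(k)$. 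Decomposing $\bm{y}(k)=\bar{\bm{y}}(k)+\check{\bm{y}}(k)$, the mean evolves as $\bar{\bm{y}}(k+1)=(1-\alpha)\bar{\bm{y}}(k)$, and for the disagreement part Lemma~3 gives $\check{\bm{y}}^T\mathbb{E}[\bm{W}(k)^2]\check{\bm{y}}\le\overline{\lambda}_{2s}\|\check{\bm{y}}\|_2^2$, while $-2\alpha\check{\bm{y}}^T\mathbb{E}[\bm{W}(k)]\check{\bm{y}}\le 2\alpha\|\check{\bm{y}}\|_2^2$ using $\underline{\lambda}_{ne}>-1$. These three terms sum to the squared-norm contraction factor $s_1$, and combining with $\|\bm{I}-\bm{W}(k)\|_2\le 1-\underline{\lambda}_n$ on the second summand via the $L^2$ triangle inequality yields
\begin{equation*}
\|\bm{y}(k+1)\|_E\le\sqrt{s_1}\,\|\bm{y}(k)\|_E+\beta(1-\underline{\lambda}_n)\,\|\nabla\check{\bm{f}}(\bm{x}(k))\|_E.
\end{equation*}

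For the second row, since $u=1$ the scalar mean-value theorem gives $\nabla\bm{f}(\bm{x}(k+1))-\nabla\bm{f}(\bm{x}(k))=\bm{\Gamma}_k(\bm{x}(k+1)-\bm{x}(k))$ with $\bm{\Gamma}_k=\mathrm{diag}(\gamma_{i,k})$ and $\gamma_{i,k}\in[\eta_i,\varphi_i]$. Applying $\bm{L}$ to both sides, substituting the $\bm{x}$-update, and using $(\bm{I}-\bm{W}(k))\nabla\bm{f}(\bm{x}(k))=(\bm{I}-\bm{W}(k))\nabla\check{\bm{f}}(\bm{x}(k))$ gives
\begin{equation*}
\nabla\check{\bm{f}}(\bm{x}(k+1))=\bigl(\bm{I}-\beta\bm{L}\bm{\Gamma}_k(\bm{I}-\bm{W}(k))\bigr)\nabla\check{\bm{f}}(\bm{x}(k))-\alpha\bm{L}\bm{\Gamma}_k\bm{y}(k).
\end{equation*}
Since $\bm{1}^T\nabla\check{\bm{f}}(\bm{x}(k))=0$, Lemma~2 applies to the first summand with factor $s_2$, and the second summand is bounded by $\alpha\overline{\varphi}\|\bm{y}(k)\|_E$ via $\|\bm{L}\|_2=1$ and $\|\bm{\Gamma}_k\|_2\le\overline{\varphi}$, yielding
\begin{equation*}
\|\nabla\check{\bm{f}}(\bm{x}(k+1))\|_E\le\alpha\overline{\varphi}\,\|\bm{y}(k)\|_E+s_2\,\|\nabla\check{\bm{f}}(\bm{x}(k))\|_E.
\end{equation*}
Stacking the two inequalities defines $M=\bigl(\begin{smallmatrix}\sqrt{s_1}&\beta(1-\underline{\lambda}_n)\\\alpha\overline{\varphi}&s_2\end{smallmatrix}\bigr)$; the first two conditions in \eqref{nafarange} force $s_1,s_2\in(0,1)$ so that the diagonal entries of $M$ are in $(0,1)$, and the third condition controls the product of the off-diagonal entries, which together with the Perron--Frobenius formula for a nonnegative $2\times 2$ matrix gives $\rho(M)=q_s<1$. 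Iterating $\bm{v}(k)\preceq M^k\bm{v}(0)$ and expanding along the Perron eigenvector produces \eqref{nres2}--\eqref{nres3} with explicit constants $m_{1s},m_{2s}$ depending on $\bm{v}(0)$.

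The main obstacle I expect is the entanglement of the integrated Jacobian $\bm{\Gamma}_k$ and the random matrix $\bm{W}(k)$ when invoking Lemma~2: the mean-value representation makes $\bm{\Gamma}_k$ depend on $\bm{x}(k+1)$ and hence on $\bm{W}(k)$, so one cannot simply condition on the past and use independence. The resolution is that Lemma~2's contraction factor $s_2$ depends only on the global bounds $\underline{\eta},\overline{\varphi}$ (through $K_1,K_2$), not on any particular $\bm{\Gamma}$, so the bound applies uniformly over every admissible realization of $\bm{\Gamma}_k$ and survives the outer expectation in $\bm{W}(k)$. A secondary subtlety is that the mean component of $\bm{y}$ contracts at rate $1-\alpha$, which need not be dominated termwise by $\sqrt{s_1}$; either the stepsize regime guarantees domination ($4\alpha+\overline{\lambda}_{2s}\ge 1$), or the mean and disagreement parts are tracked by separate geometric bounds whose maximum is absorbed into $q_s$ and $m_{1s}$.
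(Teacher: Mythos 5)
Your proposal follows essentially the same route as the paper: split $\bm{y}$ into the mean part (which obeys $\bar{\bm{y}}(k+1)=(1-\alpha)\bar{\bm{y}}(k)$ and is tracked separately as a geometrically decaying input) and the disagreement part, use the mean-value diagonal matrix $\bm{\Theta}(k)$ with Lemma 2 to get the $s_2$-contraction of $\nabla\check{\bm{f}}$ and the $\mathbb{E}\{\bm{W}(k)^2\}$ expansion (Lemma 3, independence of $\bm{W}(k)$ from the past) for the $\check{\bm{y}}$-recursion, and close the coupled two-term recursion under \eqref{nafarange}; your $2\times2$ matrix/Perron packaging versus the paper's explicit induction with $m_{1s}',m_{2s}',q_s'$ is only cosmetic, and your fallback of tracking $\bar{\bm{y}}$ separately is exactly what the paper does. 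The one point to reconcile is that your rigorous contraction factor for the disagreement part is $\sqrt{s_1}$ (the square root of the squared-norm bound $\alpha^2+2\alpha+\overline{\lambda}_{2s}$), whereas the paper's induction step \eqref{mth1ex8} and the small-gain condition $(1-s_1)(1-s_2)>\alpha\beta\overline{\varphi}(1-\underline{\lambda}_n)$ use $s_1$ itself, so with your sharper bookkeeping the closing condition should read $(1-\sqrt{s_1})(1-s_2)$ (or $s_1$ should be redefined with the square root, matching how $s_2$ is defined) — an imprecision inherited from the paper rather than a flaw in your argument.
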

\begin{proof}

First, we multiply both sides of \eqref{al1} and \eqref{al2} by $\frac{\bm{11}^T}{n}$, which yields
\begin{align}
&\bar{\bm{x}}(k+1)=\bar{\bm{x}}(k)-\alpha\bar{\bm{y}}(k),\label{c11}
\\&\bar{\bm{y}}(k+1)=\bar{\bm{y}}(k)+\bar{\bm{x}}(k+1)-\bar{\bm{x}}(k).\label{c22}
\end{align}
Due to $\bm{y}(0)=\bm{x}(0)-\bm{d}$, we have $\bar{\bm{y}}(k)=\bar{\bm{x}}(k)-\bar{\bm{d}},\forall k$.
Based on \eqref{c11}, we have 
$
\bar{\bm{x}}(k+1)-\bar{\bm{d}}=(1-\alpha)(\bar{\bm{x}}(k)-\bar{\bm{d}})
$.

We can see that for any $m_{0s}>\Vert\bar{\bm{x}}(0)-\bar{\bm{d}}\Vert_E$ and $q_{0s}\in[\vert1-\alpha\vert,1)$ such that 
\begin{align}
\Vert\bar{\bm{y}}(k)\Vert_E\leq m_{0s}q_{0s}^k,\label{barcon}
\end{align}
which means that $\bar{\bm{x}}(k)$ converges in mean square linearly to $\bar{\bm{d}}$ and $\bar{\bm{y}}(k)=\bar{\bm{x}}(k)-\bar{\bm{d}}$ converges to 0 as long as $0<\alpha<2$. Due to $\bm{y}(k)=\bar{\bm{y}}(k)+\check{\bm{y}}(k)$, we only need to prove that there exist $m_{1s}', m_{2s}'>0$ and $q'\in(q_0,1)$ such that
\begin{align}
&\Vert\check{\bm{y}}(k)\Vert_E\leq m_{1s}'q_s'^k\label{res21},
\\&\Vert\nabla\check{\bm{f}}(\bm{x}(k))\Vert_E\leq m_{2s}'q_s'^k\label{res31},
\end{align}

From Assumptions 1 and 2, we obtain that for any $k$, there exists a diagonal matrix $\bm{\Theta}(k)=diag\{\theta_1(k),...,\theta_n(k)\}$, $\eta_i\leq\theta_i(k)\leq\varphi_i, \forall i, k$, such that 
\begin{align}
\nabla\bm{f}(\bm{x}(k+1))-\nabla\bm{f}(\bm{x}(k))=\bm{\Theta}(k)(\bm{x}(k+1)-\bm{x}(k)).\label{th1e2}
\end{align}
Substituting \eqref{th1e2} into \eqref{al1}, we have 
\begin{multline}
\nabla\bm{f}(\bm{x}(k+1))-\nabla\bm{f}(\bm{x}(k))=-\alpha\bm{\Theta}(k)\bm{y}(k)
\\-\beta\bm{\Theta}(k)(\bm{I}-\bm{W}(k))\nabla\bm{f}(\bm{x}(k)).\label{th1e3}
\end{multline}
Multiplying both sides of \eqref{th1e3} by $\bm{L}$, we obtain 
\begin{multline}
\nabla\check{\bm{f}}(\bm{x}(k+1))-\nabla\check{\bm{f}}(\bm{x}(k))=-\alpha\bm{L}\bm{\Theta}(k)\bm{y}(k)
\\-\beta\bm{L}\bm{\Theta}(k)(\bm{I}-\bm{W}(k))\nabla\check{\bm{f}}(\bm{x}(k))\label{th1e4},
\end{multline}
where $(\bm{I}-\bm{W}(k))\nabla\check{\bm{f}}(\bm{x}(k))=(\bm{I}-\bm{W}(k))\bm{L}\nabla\bm{f}(\bm{x}(k))=(\bm{I}-\bm{W}(k))\nabla\bm{f}(\bm{x}(k))$.
From \eqref{th1e4}, we further obtain 
\begin{align}
\nabla\check{\bm{f}}(\bm{x}(k+1))=&\left(\bm{I}-\beta\bm{L}\bm{\Theta}(k)(\bm{I}-\bm{W}(k))\right)\nabla\check{\bm{f}}(\bm{x}(k))\notag
\\&-\alpha\bm{L}\bm{\Theta}(k)\check{\bm{y}}(k)-\alpha\bm{L}\bm{\Theta}(k)\bar{\bm{y}}(k).\label{th1e5}
\end{align}
Then, from \eqref{al2}, we have 
\begin{align}
\check{\bm{y}}(k+1)=\bm{W}(k)\check{\bm{y}}(k)+\check{\bm{x}}(k+1)-\check{\bm{x}}(k).\label{th1e6}
\end{align}
Multiplying both sides of \eqref{al1} by $\bm{L}$, we have
\begin{align}
\check{\bm{x}}(k+1)=\check{\bm{x}}(k)-\alpha\check{\bm{y}}(k)-\beta(\bm{I}-\bm{W}(k))\nabla\check{\bm{f}}(\bm{x}(k)).\label{th1e7}
\end{align}
Substituting \eqref{th1e7} into \eqref{th1e6}, we have 
\begin{align}
\check{\bm{y}}(k+1)=&(\bm{W}(k)-\alpha\bm{I})\check{\bm{y}}(k)-\beta(\bm{I}-\bm{W}(k))\nabla\check{\bm{f}}(\bm{x}(k)).
\label{th1ex7}
\end{align}

Next, we will prove \eqref{res21} and \eqref{res31} by mathematical induction based on \eqref{th1e5} and \eqref{th1ex7}.

It is not difficult to prove that when $k=0$, there exist $m_{1s}', m_{2s}'>0$ and $q_s'\in(q_{0s},1)$ such that \eqref{res21} and \eqref{res31} hold.

Then, we assume that for all $k\leq\kappa$, \eqref{res21} and \eqref{res31} hold.

When $k=\kappa+1$,
from \eqref{th1ex7}, because $\bm{W}(k)$ is independent of $\bm{x}(k)$ and $\bm{y}(k)$, 
we have 
\begin{align}
\Vert\check{\bm{y}}(k+1)\Vert_E&\leq s_1\Vert\check{\bm{y}}(k)\Vert_E+\beta(1-\underline{\lambda}_n)\Vert\nabla\check{\bm{f}}(\bm{x}(k))\Vert_E\notag
\\&\leq \left[s_1m_{1s}'+\beta(1-\underline{\lambda}_n)m_{2s}'\right]q_s'^k.\label{mth1ex8}
\end{align}
From \eqref{th1e5} and \eqref{barcon}, we obtain based on Lemma \ref{lem1} that
\begin{align}
\Vert\nabla\check{\bm{f}}(\bm{x}(k+1))\Vert_E\leq &s_2\Vert\nabla\check{\bm{f}}(\bm{x}(k))\Vert_E\notag
\\&+\alpha\overline{\varphi}(\Vert\check{\bm{y}}(k)\Vert_E+\Vert\bar{\bm{y}}(k)\Vert_E)\notag
\\\leq &\left[s_2m_{2s}'+\alpha\overline{\varphi}(m_{1s}'+m_0)\right]q_s'^k.\label{mth1ex9}
\end{align}
Due to $\alpha<\sqrt{2-\overline{\lambda}_{2s}}-1, \ \beta<\frac{2K_1}{K_2^2}$, we have $s_1<1$ and $s_2<1$.
Since $\alpha\beta\overline{\varphi}(1-\underline{\lambda}_n)<(1-s_1)(1-s_2)$,
there exist $m_{1s}'>0$ and $m_{2s}'>0$ such that 
\begin{align}
\alpha\beta\overline{\varphi}(1-\underline{\lambda}_n)(m_{1s}'+m_{0s})&<\beta(1-\underline{\lambda}_n)(1-s_2)m_{2s}'\notag
\\&< (1-s_1)(1-s_2)m_{1s}'.\label{mss1}
\end{align}
Then, we have there exists $q_s'\in(q_0, 1)$ such that
\begin{align}
\left[s_1m_{1s}'+\beta(1-\underline{\lambda}_n)m_{2s}'\right]q_s'^k\leq m_{1s}'q_s'^{k+1},\label{mth1ex10}
\\\left[s_2m_{2s}'+\alpha\overline{\varphi}(m_{1s}'+m_{0s})\right]q_s'^k\leq m_{2s}'q_s'^{k+1},\label{mth1ex11}
\end{align}
Substituting \eqref{mth1ex10} and \eqref{mth1ex11} into \eqref{mth1ex8} and \eqref{mth1ex9} yields
\begin{align}
&\Vert\check{\bm{y}}(k+1)\Vert_E\leq m_{1s}'q_s'^{k+1},\notag
\\&\Vert\nabla\check{\bm{f}}(\bm{x}(k+1))\Vert_E\leq m_{2s}'q_s'^{k+1}.\notag
\end{align}
This completes the induction.

Then, based on \eqref{barcon} and \eqref{res21}, we obtain that 
\begin{align}
\Vert\bm{y}(k)\Vert_E\leq\Vert\bar{\bm{y}}(k)\Vert_E+\Vert\check{\bm{y}}(k)\Vert_E\leq(m_{0s}+m_{1s}')q_s'^k,
\end{align}
which validates \eqref{nres2}. The proof is completed.
\end{proof}
\begin{rem}
In Theorem 1, the stepsize $\alpha$ and $\beta$ are limited by \eqref{nafarange}. The upperbound of $\alpha$ is related to $\beta$ and vice versa. Since $\lim_{\alpha\rightarrow0}\frac{\alpha}{1-s_1}=0$ and $\lim_{\beta\rightarrow0}\frac{1-s_2}{\beta}=2K_1$, as long as $\alpha$ and $\beta$ small enough, the equation \eqref{nafarange} hold and there exist $\alpha$ and $\beta$ such that the proposed algorithm converges in mean square.

It should also be noted that $m_{1s}$, $m_{2s}$ and $q_{s}$ are independent of $k$. If $m_{1s}$ is a sufficiently large number, $m_2$ can be selected as $\frac{\alpha\beta\overline{\varphi}(1-\underline{\lambda}_n)+(1-s_1)(1-s_2)}{2\beta(1-\underline{\lambda}_n)(1-s_2)}m_{1s}$ and $q_s$ is any number in the interval $(q_{sl}, 1)$, where $q_{sl}=\max\{s_1+\beta(1-\underline{\lambda}_n)\frac{m_{2s}}{m_{1s}}, s_2+\alpha\overline{\varphi}\frac{m_{1s}}{m_{2s}}, q_0\}$. $q_{sl}<1$ can be proved by \eqref{nafarange}. 
Therefore, $m_{1s}$, $m_{2s}$ and $q_{s}$ are all independent of $k$ and similar results is also applicable to the following Theorems 2 and 3.

Additionally, most existing works require the network to be connected or jointly connected, i.e., there exist an integer $B\geq1$ such that $\lambda_2\{\bm{W}(k)\bm{W}(k+1)...\bm{W}(k+B)\}<1$. But in stochastic networks, $B$ may not exist. In this paper, we only need the network is connected in mean, i.e., $\lambda_2\{\mathbb{E}\{\bm{W}(k)\}\}<1$.
\end{rem}

\begin{cor}
Suppose Assumptions 1-3 hold.
For any $\bm{x}(0)\in\mathbb{R}^{n}$ and $(\alpha,\beta)$ satisfying \eqref{nafarange},
the sequence $\{\bm{x}(k)\}_{k=0}^{\infty}$ generated by Algorithm 1 converges linearly to $\bm{x}^*$ in mean square. The convergence rate is 
\begin{align}
\max\Big\{&s_1+\frac{\alpha\beta\overline{\varphi}(1-\underline{\lambda}_n)}{1-s_2}, s_2+\frac{\alpha\beta\overline{\varphi}(1-\underline{\lambda}_n)}{1-s_1}, \vert1-\alpha\vert\Big\}.\notag
\end{align} 
\end{cor}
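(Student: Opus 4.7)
The plan is to split $\bm{x}(k)-\bm{x}^*=\bar{\tilde{\bm{x}}}(k)+\check{\tilde{\bm{x}}}(k)$ along and transverse to $\bm{1}$, and bound each piece by quantities already controlled in Theorem~\ref{nconsto}. Proposition~\ref{optimalcondition}(i) yields $\bar{\bm{x}}^*=\bar{\bm{d}}$, so the identity $\bar{\bm{y}}(k)=\bar{\bm{x}}(k)-\bar{\bm{d}}$ established in the proof of Theorem~\ref{nconsto} makes $\bar{\tilde{\bm{x}}}(k)=\bar{\bm{y}}(k)$ and delivers $\|\bar{\tilde{\bm{x}}}(k)\|_E\le m_{0s}|1-\alpha|^k$ immediately.

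For $\check{\tilde{\bm{x}}}(k)$, I would combine the mean-value relation $\nabla\bm{f}(\bm{x}(k))-\nabla\bm{f}(\bm{x}^*)=\bm{\Theta}(k)(\bm{x}(k)-\bm{x}^*)$ from Assumptions~\ref{lip}--\ref{convex} with Proposition~\ref{optimalcondition}(ii), which forces $\bm{L}\nabla\bm{f}(\bm{x}^*)=\bm{L}\bm{1}(\mu^*)^T=\bm{0}$; multiplying through by $\bm{L}$ yields $\nabla\check{\bm{f}}(\bm{x}(k))=\bm{L}\bm{\Theta}(k)\check{\tilde{\bm{x}}}(k)+\bm{L}\bm{\Theta}(k)\bar{\tilde{\bm{x}}}(k)$. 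Since $\bm{1}^T\check{\tilde{\bm{x}}}(k)=0$, the crucial inversion comes from applying Lemma~1 with the doubly stochastic choice $\bm{A}=\frac{\bm{1}\bm{1}^T}{n}$ (for which $\lambda_2(\bm{A})=\lambda_n(\bm{A})=0$ and $\bm{I}-\bm{A}=\bm{L}$ acts as the identity on $\bm{1}^\perp$) and $\bm{\Gamma}=\bm{\Theta}(k)$, yielding an explicit constant $K^\star=\frac{\underline{\eta}[\overline{\varphi}+(n-1)\underline{\eta}]}{\sqrt{n[\overline{\varphi}^2+(n-1)\underline{\eta}^2]}}>0$ with $\|\bm{L}\bm{\Theta}(k)\check{\tilde{\bm{x}}}(k)\|_2\ge K^\star\|\check{\tilde{\bm{x}}}(k)\|_2$. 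Combined with the trivial bound $\|\bm{L}\bm{\Theta}(k)\bar{\tilde{\bm{x}}}(k)\|_2\le\overline{\varphi}\|\bar{\bm{y}}(k)\|_2$ and Theorem~\ref{nconsto}'s $\|\nabla\check{\bm{f}}(\bm{x}(k))\|_E\le m_{2s}q_s^k$, this delivers $\|\check{\tilde{\bm{x}}}(k)\|_E\le\frac{1}{K^\star}\bigl(m_{2s}q_s^k+\overline{\varphi}m_{0s}|1-\alpha|^k\bigr)$.

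Triangle inequality then produces linear mean-square convergence of $\bm{x}(k)$ to $\bm{x}^*$ at rate $\max\{q_s,|1-\alpha|\}$. The explicit three-way maximum stated in the corollary comes from revisiting the admissibility conditions for $q_s$ in the induction \eqref{mth1ex8}--\eqref{mth1ex9}: namely $s_1+\beta(1-\underline{\lambda}_n)\frac{m_{2s}}{m_{1s}}\le q_s$ and $s_2+\alpha\overline{\varphi}\frac{m_{1s}}{m_{2s}}\le q_s$, which with suitable choices of the free ratio $m_{2s}/m_{1s}$ yield the two symmetric upper bounds $s_1+\frac{\alpha\beta\overline{\varphi}(1-\underline{\lambda}_n)}{1-s_2}$ and $s_2+\frac{\alpha\beta\overline{\varphi}(1-\underline{\lambda}_n)}{1-s_1}$; appending $|1-\alpha|$ from the mean dynamics completes the max.

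The main obstacle is verifying that each of the three candidate rates is strictly less than~$1$: the two explicit expressions fall below $1$ exactly under $\alpha\beta\overline{\varphi}(1-\underline{\lambda}_n)<(1-s_1)(1-s_2)$, which is precisely the third inequality in \eqref{nafarange}, while $|1-\alpha|<1$ follows from $\alpha<\sqrt{2-\overline{\lambda}_{2s}}-1<1$. Positivity of $K^\star$ in turn relies on the strict strong convexity $\underline{\eta}>0$ supplied by Assumption~\ref{convex}, without which the inversion step on $\bm{1}^\perp$ would fail.
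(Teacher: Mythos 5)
Your proposal is correct, and for the heart of the corollary---passing from the decay of $\Vert\bm{y}(k)\Vert_E$ and $\Vert\nabla\check{\bm{f}}(\bm{x}(k))\Vert_E$ to linear mean-square decay of $\Vert\bm{x}(k)-\bm{x}^*\Vert_E$---you take a genuinely different route from the paper. The paper argues qualitatively: the limit satisfies conditions (i)--(ii) of Proposition~\ref{optimalcondition} because $\bm{1}^T\bm{y}(k)=\bm{1}^T\bm{x}(k)-\bm{1}^T\bm{d}$, and the linear rate is inferred from the fact that $\bm{x}(k+1)-\bm{x}(k)$ decays linearly by \eqref{al1}, i.e.\ a Cauchy-type increment argument with the limit identified through the optimality conditions. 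You instead split $\bm{x}(k)-\bm{x}^*$ into its mean and deviation parts, identify the mean part with $\bar{\bm{y}}(k)$ via $\bar{\bm{x}}^*=\bar{\bm{d}}$, and invert the map $\check{\tilde{\bm{x}}}\mapsto\bm{L}\bm{\Theta}(k)\check{\tilde{\bm{x}}}$ on $\bm{1}^\perp$ by applying Lemma~1 with $\bm{A}=\frac{\bm{11}^T}{n}$ (so $\lambda_2(\bm{A})=0$), which is a legitimate use of that lemma and gives the explicit bound $\Vert\check{\tilde{\bm{x}}}(k)\Vert_E\le\frac{1}{K^\star}\bigl(\Vert\nabla\check{\bm{f}}(\bm{x}(k))\Vert_E+\overline{\varphi}\Vert\bar{\bm{y}}(k)\Vert_E\bigr)$. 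Your version buys an explicit error estimate tying the rate directly to $\max\{q_s,\vert1-\alpha\vert\}$ with no limit-identification step, at the cost of invoking the strong-convexity constant $K^\star$; the paper's version is shorter but leaves the rate of $\Vert\bm{x}(k)-\bm{x}^*\Vert_E$ implicit. For the displayed three-term expression you then reason exactly as the paper does, from \eqref{mth1ex10}--\eqref{mth1ex11}, \eqref{mss1} and $q_s>\vert1-\alpha\vert$; note that, as in the paper, these inequalities strictly only show every admissible $q_s$ exceeds that max (the optimal ratio $m_{2s}'/m_{1s}'$ equalizes the two constraints, generally above the displayed endpoints), so the slight looseness there is inherited from the paper's own argument rather than introduced by you.
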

\begin{proof}
Theorem \ref{fixedbounded} implies that $\Vert\bm{y}(k)\Vert_E$ and $\Vert\nabla\check{\bm{f}}(\bm{x}(k))\Vert_E$ converge to zero linearly. From \eqref{al0} and \eqref{al2}, we have $\bm{1}^T\bm{y}(k)=\bm{1}^T\bm{x}(k)-\bm{1}^T\bm{d}$. So the condition (i) in Proposition 1 is ensured if $\Vert\bm{y}(k)\Vert_E$ converges to 0 and so is the condition (ii) in Proposition 1 if $\Vert\nabla\check{\bm{f}}(\bm{x}(k))\Vert_E$ converges to 0. Therefore, $\bm{x}(k)$ converges to the optimal solution $\bm{x}^*$. Moreover, due to the linear convergence of $\Vert\bm{y}(k)\Vert_E$ and $\Vert\nabla\check{\bm{f}}(\bm{x}(k))\Vert_E$, we obtain from \eqref{al1} that $\bm{x}(k+1)-\bm{x}(k)$ converges linearly to 0, which, together with the above analysis, implies $\bm{x}(k)$ converges to $\bm{x}^*$ at a linear rate.


From \eqref{mth1ex10} and \eqref{mth1ex11}, we obtain that 
\begin{align}
q_s'\geq s_1+\beta(1-\underline{\lambda}_n)\frac{m_{2s}'}{m_{1s}'}, \ q_s'\geq s_2+\alpha\overline{\varphi}\frac{m_{1s}'}{m_{2s}'}.\label{mth2e1}
\end{align}
From \eqref{mss1}, we have 
\begin{align}
\frac{\alpha\overline{\varphi}}{1-s_2}<\frac{m_{2_s}'}{m_{1s}'}<\frac{1-s_1}{\beta(1-\underline{\lambda}_n)}.\label{mth2e3}
\end{align}
Since that $m_{1s}'$ and $m_{2s}'$ can be any positive value as long as they satisfy \eqref{mth2e3} and $q_s>\vert1-\alpha\vert$, we can get the result.
\end{proof}
Compared with fixed networks, the stochastic network is time-varying and, most importantly, possible to be disconnected at any time, which is different from most of existing works that require the network to be always connected or jointly connected. 
Moreover, we obtain from Assumption \ref{network} that as long as $\theta_{ij}(k)>0$ for each communication link $(i,j)$, we will have the relation \eqref{con1}. This means that the proposed algorithm converges to the optimal solution even if each communication link works with a small positive possibility. Although small $\theta_{ij}(k)>0$ does not break the convergence of the algorithm, it affects the convergence rate, which will be extensively discussed in Section VIII.

\section{Resilience properties}
\subsection{Convergence in the presence of disturbances}
In this section, we shows that Algorithm 1 converges to the optimal solution in mean square under disturbance on states. Here we do not consider the disturbance on $\bm{y}(k)$ because $\bm{y}(k)$ is only a message that tracks the deviation while $\bm{x}(k)$ always denotes the running status of equipments, such as power output of generators, which is more likely to be disturbed by misoperation, attack, or environmental noise.

We add disturbance to states of \eqref{al0}-\eqref{al2} and obtain that
\begin{align}
\bm{x}(k+1)=&\bm{x}(k)+\bm{\zeta}(k)-\alpha\bm{y}(k)-\beta(\bm{I}-\bm{W}(k))\nabla\bm{f}(\bm{x}(k))\label{ald1},
\\\bm{y}(k+1)=&\bm{W}(k)\bm{y}(k)+\bm{x}(k+1)-\bm{x}(k)\label{ald2}. 
\end{align}
Random variable
$\bm{\zeta}(k)=[\zeta_1(k),...,\zeta_n(k)]$ is the disturbance injected to the state. We give the following assumption about the disturbance.
\begin{assum}\label{noi}
There exist $m_{\zeta}>0$ and $q_{\zeta}\in(0,1)$ such that for any $k$, $\Vert\bm{\zeta}(k)\Vert_E\leq m_{\zeta}q_{\zeta}^k$. 
\end{assum}

The resilience property means the algorithm states is able to maintain its convergence to the optimal solution after the injected disturbance decays. 
Assumption 4 restricts the second order moment of the disturbance to be covered by an upper bound that vanishes exponentially. 
Assumption \ref{noi} is not only suitable for those continuous disturbance converging exponentially to 0, but it also applies to all kinds of finite-duration disturbances, i.e. the disturbance $\bm{\zeta}(k)=0$ after a finite-time steps $k\geq K_{\zeta}$.
Based on this assumption, we prove the linear convergence of the expectation of the states norm $\Vert\bm{x}(k)\Vert_E$.

\begin{thm}\label{consto}
Consider the sequence $\{\bm{x}(k)\}_{k=0}^{\infty}$ and $\{\bm{y}(k)\}_{k=0}^{\infty}$ generated by \eqref{ald1} and \eqref{ald2}. Suppose Assumptions \ref{lip}-\ref{noi} hold. For any $\bm{x}(0)\in\mathbb{R}^{n}$ and $(\alpha,\beta)$ satisfying \eqref{nafarange},
there exist $m_{1d}, m_{2d}>0$ and $q_d\in(0,1)$ such that for any $k\geq0$,
\begin{align}
&\Vert\bm{y}(k)\Vert_E\leq m_{1d}q_d^k\label{nnres2},
\\&\Vert\nabla\check{\bm{f}}(\bm{x}(k))\Vert_E\leq m_{2d}q_d^k\label{nnres3}.
\end{align}
\end{thm}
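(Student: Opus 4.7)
The plan is to mirror the induction argument used in Theorem~\ref{nconsto}, tracking the additional disturbance terms $\bm{\zeta}(k)$ that now appear in \eqref{ald1}--\eqref{ald2}. The structural dynamics are unchanged up to additive $\bm{\zeta}(k)$ forcing, so I expect to re-use the contraction factors $s_1$ and $s_2$ and rely on Assumption~\ref{noi} to absorb the new terms.

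First, I would establish linear mean-square decay of the averaged iterate $\bar{\bm{y}}(k)$. Multiplying \eqref{ald1}--\eqref{ald2} by $\frac{\bm{11}^T}{n}$ and noting that $\bm{y}(0)=\bm{x}(0)-\bm{d}$ still forces $\bar{\bm{y}}(k)=\bar{\bm{x}}(k)-\bar{\bm{d}}$ for all $k$ (the identity is preserved iteration by iteration irrespective of $\bm{\zeta}$), I obtain the scalar recursion $\bar{\bm{y}}(k+1)=(1-\alpha)\bar{\bm{y}}(k)+\bar{\bm{\zeta}}(k)$. Unrolling this and using $\Vert\bar{\bm{\zeta}}(k)\Vert_E\le\Vert\bm{\zeta}(k)\Vert_E\le m_\zeta q_\zeta^k$ together with $\alpha<2$ (implied by \eqref{nafarange}), I would deduce that for any $q_{0d}\in(\max\{|1-\alpha|,q_\zeta\},1)$ there exists $m_{0d}>0$ with $\Vert\bar{\bm{y}}(k)\Vert_E\le m_{0d}q_{0d}^k$.

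Second, I would project the dynamics onto the orthogonal complement of $\bm{1}$ via $\bm{L}$, using $\bm{L}\bm{W}(k)=\bm{W}(k)\bm{L}$ (valid since $\bm{W}(k)$ is doubly stochastic). Together with the intermediate-value diagonal matrix $\bm{\Theta}(k)$ of Theorem~\ref{nconsto}, this produces the coupled recursions
\begin{align}
\check{\bm{y}}(k+1)={}&(\bm{W}(k)-\alpha\bm{I})\check{\bm{y}}(k)\notag\\
&-\beta(\bm{I}-\bm{W}(k))\nabla\check{\bm{f}}(\bm{x}(k))+\check{\bm{\zeta}}(k),\notag\\
\nabla\check{\bm{f}}(\bm{x}(k+1))={}&\bigl(\bm{I}-\beta\bm{L}\bm{\Theta}(k)(\bm{I}-\bm{W}(k))\bigr)\nabla\check{\bm{f}}(\bm{x}(k))\notag\\
&-\alpha\bm{L}\bm{\Theta}(k)(\check{\bm{y}}(k)+\bar{\bm{y}}(k))+\bm{L}\bm{\Theta}(k)\bm{\zeta}(k),\notag
\end{align}
which are exactly \eqref{th1ex7} and \eqref{th1e5} with extra $\bm{\zeta}$-forcing. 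Taking expectation norms and invoking Lemma~2 yields scalar bounds in which the intrinsic contractions are again $s_1$ and $s_2$, plus forcing contributions no larger than $m_\zeta q_\zeta^k$ and $\overline{\varphi}m_\zeta q_\zeta^k$, which both decay at least as fast as $q_{0d}^k$.

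Third, I would run induction to show $\Vert\check{\bm{y}}(k)\Vert_E\le m_{1d}'q_d'^k$ and $\Vert\nabla\check{\bm{f}}(\bm{x}(k))\Vert_E\le m_{2d}'q_d'^k$. The induction step reduces to finding $m_{1d}',m_{2d}'>0$ and $q_d'\in(q_{0d},1)$ satisfying
\begin{align}
s_1m_{1d}'+\beta(1-\underline{\lambda}_n)m_{2d}'+m_\zeta&\le q_d'\,m_{1d}',\notag\\
s_2m_{2d}'+\alpha\overline{\varphi}(m_{1d}'+m_{0d})+\overline{\varphi}m_\zeta&\le q_d'\,m_{2d}'.\notag
\end{align}
Combining these two inequalities and letting $m_{1d}',m_{2d}'\to\infty$ proportionally, the constant forcings $m_\zeta$ and $m_{0d}$ become relatively negligible, so feasibility reduces to the same coupling condition $\alpha\beta\overline{\varphi}(1-\underline{\lambda}_n)<(1-s_1)(1-s_2)$ supplied by \eqref{nafarange}. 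Setting $m_{1d}=m_{0d}+m_{1d}'$, $m_{2d}=m_{2d}'$, $q_d=q_d'$, and combining with the bound on $\bar{\bm{y}}(k)$ yields \eqref{nnres2} and \eqref{nnres3}.

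The main obstacle will be the joint accounting of decay rates: $q_d'$ must simultaneously dominate $|1-\alpha|$, $q_\zeta$, and the intrinsic contraction rate from $s_1,s_2$, while staying below $1$ and still leaving room for a solution to the two inequalities above. The exponential decay of $\bm{\zeta}(k)$ guaranteed by Assumption~\ref{noi} is what makes this possible—without it, the additive forcing would accumulate and linear convergence would fail, explaining why algorithms such as WGA cannot resile from persistent disturbance.
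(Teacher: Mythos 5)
Your proposal is correct and follows essentially the same route as the paper's own proof: the same $\bar{\cdot}/\check{\cdot}$ decomposition with $\bar{\bm{y}}(k)=\bar{\bm{x}}(k)-\bar{\bm{d}}$ preserved under disturbance, the same disturbed recursions for $\check{\bm{y}}$ and $\nabla\check{\bm{f}}$, Lemma 2 for the contraction factors $s_1,s_2$, and the same induction in which the $m_\zeta$ and $m_{0d}$ forcing terms are absorbed by choosing $m_{1d}',m_{2d}'$ large and $q_d'\in(\max\{q_{0d},q_\zeta\},1)$, with feasibility resting on $\alpha\beta\overline{\varphi}(1-\underline{\lambda}_n)<(1-s_1)(1-s_2)$ from \eqref{nafarange}. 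No gaps worth noting.
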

\begin{proof}
Similar to the proof of Theorem \ref{nconsto}, we have 
\begin{align}
\Vert\bar{\bm{y}}(k)\Vert_E&=\Vert\bar{\bm{x}}(k+1)-\bar{\bm{d}}\Vert_E\notag
\\&\leq(1-\alpha)^k\Vert\bar{\bm{x}}(0)-\bar{\bm{d}}\Vert_E+m_{\zeta}\frac{(1-\alpha)^{k}-q_{\zeta}^{k}}{1-\alpha-q_{\zeta}}.
\notag
\end{align}
We can see that for any $m_{0d}>\Vert\bar{\bm{x}}(0)-\bar{\bm{d}}\Vert_E+\frac{m_{\zeta}}{\vert1-\alpha-q_{\zeta}\vert}$ and $q_{0d}\in\big(\max\{\vert1-\alpha\vert, q_{\zeta}\},1\big)$ such that 
\begin{align}
\Vert\bar{\bm{y}}(k)\Vert_E\leq m_{0d}q_{0d}^k,\label{nnbarcon}
\end{align}
and we only need to prove that there exist $m_{1d}', m_{2d}'>0$ and $q_d'\in(q_0,1)$ such that
\begin{align}
&\Vert\check{\bm{y}}(k)\Vert_E\leq m_{1d}'q_d'^k\label{nnres21},
\\&\Vert\nabla\check{\bm{f}}(\bm{x}(k))\Vert_E\leq m_{2d}'q_d'^k\label{nnres31},
\end{align}

Similar to \eqref{th1e5} and \eqref{th1ex7}, we still have
\begin{align}
\nabla\check{\bm{f}}(\bm{x}(k+1))=&\left(\bm{I}-\beta\bm{L}\bm{\Theta}(k)(\bm{I}-\bm{W}(k))\right)\nabla\check{\bm{f}}(\bm{x}(k))\notag
\\&-\alpha\bm{L}\bm{\Theta}(k)\check{\bm{y}}(k)-\alpha\bm{L}\bm{\Theta}(k)\bar{\bm{y}}(k)\notag
\\&+\bm{L}\bm{\Theta}(k)\bm{\zeta}(k).\label{nnth1e5}
\end{align}
\begin{align}
\check{\bm{y}}(k+1)=&(\bm{W}(k)-\alpha\bm{I})\check{\bm{y}}(k)+\check{\bm{\zeta}}(k)\notag
\\&-\beta(\bm{I}-\bm{W}(k))\nabla\check{\bm{f}}(\bm{x}(k)).
\label{nnth1ex7}
\end{align}

Next, we will prove \eqref{nnres21} and \eqref{nnres31} by mathematical induction based on \eqref{nnth1e5} and \eqref{nnth1ex7}.

It is not difficult to prove that when $k=0$, there exist $m_{1d}', m_{2d}'>0$ and $q_d'\in(\max\{q_{0d},q_{\zeta}\},1)$ such that \eqref{nnres21} and \eqref{nnres31} hold.

Then, we assume that for all $k\leq\kappa$, \eqref{nnres21} and \eqref{nnres31} hold.

When $k=\kappa+1$,
from \eqref{nnth1ex7}, because $\bm{W}(k)$ is independent of $\bm{x}(k)$ and $\bm{y}(k)$, 
we have 
\begin{align}
\Vert\check{\bm{y}}(k+1)\Vert_E&\leq s_1\Vert\check{\bm{y}}(k)\Vert_E+\beta(1-\underline{\lambda}_n)\Vert\nabla\check{\bm{f}}(\bm{x}(k))\Vert_E+m_{\zeta}q_{\zeta}^k\notag
\\&\leq \left[s_1m_{1d}'+\beta(1-\underline{\lambda}_n)m_{2d}'+m_{\zeta}\right]q_d'^k.\label{nth1ex8}
\end{align}
From \eqref{nnth1e5} and \eqref{nnbarcon}, we obtain based on Lemma \ref{lem1} that 
\begin{align}
\Vert\nabla\check{\bm{f}}(\bm{x}(k+1))\Vert_E\leq &s_2\Vert\nabla\check{\bm{f}}(\bm{x}(k))\Vert_E+\overline{\varphi}\Vert\bm{\zeta}(k)\Vert_E\notag
\\&+\alpha\overline{\varphi}(\Vert\check{\bm{y}}(k)\Vert_E+\Vert\bar{\bm{y}}(k)\Vert_E)\notag
\\\leq &\left[s_2m_{2d}'+\alpha\overline{\varphi}(m_{1d}'+m_0)+\overline{\varphi}m_{\zeta}\right]q_d'^k.\label{nnth1ex9}
\end{align}
Similar to \eqref{mth1ex10} and \eqref{mth1ex11}, we have there exist $m_{1d}'>0$, $m_{2d}'$ and $q_d'\in(\max\{q_{0d},q_{\zeta}\}, 1)$ such that
\begin{align}
\left[s_1m_{1d}'+\beta(1-\underline{\lambda}_n)m_{2d}'+m_{\zeta}\right]q_d'^k\leq m_{1d}'q_d'^{k+1},\label{nth1ex10}
\\\left[s_2m_{2d}'+\alpha\overline{\varphi}(m_{1d}'+m_{0d})+\overline{\varphi}m_{\zeta}\right]q_d'^k\leq m_{2d}'q_d'^{k+1},\label{nth1ex11}
\end{align}
Substituting \eqref{nth1ex10} and \eqref{nth1ex11} into \eqref{nth1ex8} and \eqref{nnth1ex9} yields
\begin{align}
&\Vert\check{\bm{y}}(k+1)\Vert_E\leq m_{1d}'q_d'^{k+1},\notag
\\&\Vert\nabla\check{\bm{f}}(\bm{x}(k+1))\Vert_E\leq m_{2d}'q_d'^{k+1}.\notag
\end{align}
This completes the induction.

Then, based on \eqref{nnbarcon} and \eqref{nnres21}, we obtain that 
\begin{align}
\Vert\bm{y}(k)\Vert_E\leq\Vert\bar{\bm{y}}(k)\Vert_E+\Vert\check{\bm{y}}(k)\Vert_E\leq(m_{0d}+m_{1d}')q_d'^k,
\end{align}
which validates \eqref{nnres2}. The proof is completed.
\end{proof}
\begin{rem}
In Theorem 2, we consider decaying disturbance to validate the resilience of the proposed algorithm. If the noise does not decay to 0 but is bounded, i.e., $q_{\zeta}=1$, $\bm{x}(k)$ cannot converge to the optimal solution but the gap between $\bm{x}(k)$ and the optimal solution is bounded, which can be obtained from Theorem 2 by setting $q_{d}=1$.
\end{rem}

\begin{cor}
Suppose Assumptions 1-4 hold.
For any $\bm{x}(0)\in\mathbb{R}^{n}$ and $(\alpha,\beta)$ satisfying \eqref{nafarange},
the sequence $\{\bm{x}(k)\}_{k=0}^{\infty}$ generated by \eqref{ald1} and \eqref{ald2} converges linearly to $\bm{x}^*$ in mean square. The convergence rate is 
\begin{align}
\max\Big\{&s_1+\frac{\alpha\beta\overline{\varphi}(1-\underline{\lambda}_n)}{1-s_2}, s_2+\frac{\alpha\beta\overline{\varphi}(1-\underline{\lambda}_n)}{1-s_1}, \vert1-\alpha\vert, q_{\zeta}\Big\}.\notag
\end{align} 
\end{cor}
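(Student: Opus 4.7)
The plan is to mirror the proof of Corollary 1, substituting Theorem 2 for Theorem 1 so the disturbance is absorbed, and then to read off the explicit rate by revisiting the induction constants from Theorem 2.

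First, I would verify that the invariant $\bm{1}^T\bm{y}(k)=\bm{1}^T\bm{x}(k)-\bm{1}^T\bm{d}$ still holds under \eqref{ald1}-\eqref{ald2}. Multiplying \eqref{ald2} on the left by $\bm{1}^T$ and using $\bm{1}^T\bm{W}(k)=\bm{1}^T$ gives $\bm{1}^T\bm{y}(k+1)=\bm{1}^T\bm{y}(k)+\bm{1}^T\bm{x}(k+1)-\bm{1}^T\bm{x}(k)$; a telescoping induction from $\bm{y}(0)=\bm{x}(0)-\bm{d}$ yields the identity. Note that $\bm{\zeta}(k)$ enters \eqref{ald1} but cancels in this identity because it affects $\bm{x}(k+1)$ and the difference $\bm{x}(k+1)-\bm{x}(k)$ in matching ways. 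Combined with Theorem 2, $\Vert\bm{y}(k)\Vert_E\to 0$ then implies condition (i) of Proposition 1, while $\Vert\nabla\check{\bm{f}}(\bm{x}(k))\Vert_E\to 0$ implies condition (ii); strong convexity (Assumption 2) gives uniqueness, hence $\bm{x}(k)\to\bm{x}^*$ in mean square.

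Second, to upgrade this to a linear rate, I would use \eqref{ald1} and $(\bm{I}-\bm{W}(k))\nabla\bm{f}(\bm{x}(k))=(\bm{I}-\bm{W}(k))\nabla\check{\bm{f}}(\bm{x}(k))$ to write
\begin{align}
\Vert\bm{x}(k+1)-\bm{x}(k)\Vert_E\leq\Vert\bm{\zeta}(k)\Vert_E+\alpha\Vert\bm{y}(k)\Vert_E+\beta(1-\underline{\lambda}_n)\Vert\nabla\check{\bm{f}}(\bm{x}(k))\Vert_E.\notag
\end{align}
Each summand decays linearly with rate at most $\max\{q_\zeta,q_d\}$ by Assumption 4 and Theorem 2. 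A standard telescoping bound $\Vert\bm{x}(k)-\bm{x}^*\Vert_E\leq\sum_{j\geq k}\Vert\bm{x}(j+1)-\bm{x}(j)\Vert_E$ then yields geometric decay of $\Vert\bm{x}(k)-\bm{x}^*\Vert_E$ at the same rate.

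Third, to derive the explicit rate expression, I would revisit \eqref{nth1ex10}-\eqref{nth1ex11}. Dividing both sides by $q_d'^{k}$ produces the lower bounds $q_d'\geq s_1+\beta(1-\underline{\lambda}_n)m_{2d}'/m_{1d}'+m_\zeta/m_{1d}'$ and $q_d'\geq s_2+\alpha\overline{\varphi}(m_{1d}'+m_{0d})/m_{2d}'+\overline{\varphi}m_\zeta/m_{2d}'$. Sending $m_{1d}',m_{2d}'\to\infty$ along the admissible cone $\alpha\overline{\varphi}/(1-s_2)<m_{2d}'/m_{1d}'<(1-s_1)/[\beta(1-\underline{\lambda}_n)]$ implied by \eqref{mss1} kills the $m_{0d}$ and $m_\zeta$ contributions in the ratio, and optimizing over $m_{2d}'/m_{1d}'$ yields precisely the two terms $s_1+\alpha\beta\overline{\varphi}(1-\underline{\lambda}_n)/(1-s_2)$ and $s_2+\alpha\beta\overline{\varphi}(1-\underline{\lambda}_n)/(1-s_1)$. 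The terms $|1-\alpha|$ from \eqref{nnbarcon} and $q_\zeta$ from Assumption 4 enter as floor values for $q_d'$, producing the full maximum.

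The main obstacle is the last step: showing rigorously that the absolute constants $m_\zeta$ and $m_{0d}$ can be rendered negligible in the rate by inflating $m_{1d}',m_{2d}'$, while simultaneously verifying that $q_d'\geq q_\zeta$ is unavoidable because otherwise the geometric envelope $m_\zeta q_\zeta^k$ would exceed $m_{1d}'q_d'^{k}$ for large $k$ and the Theorem 2 induction would break. The bookkeeping parallels Corollary 1, with the disturbance decay rate $q_\zeta$ appearing as a genuine new floor constraint in the final maximum.
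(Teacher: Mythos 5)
Your proposal is correct and follows essentially the same route as the paper: invoke Theorem 2 to get linear decay of $\Vert\bm{y}(k)\Vert_E$ and $\Vert\nabla\check{\bm{f}}(\bm{x}(k))\Vert_E$, use the invariant $\bm{1}^T\bm{y}(k)=\bm{1}^T\bm{x}(k)-\bm{1}^T\bm{d}$ together with Proposition 1 to identify the limit as $\bm{x}^*$, and then read the rate off the induction inequalities \eqref{nth1ex10}--\eqref{nth1ex11}, exploiting the freedom in $m_{1d}',m_{2d}'$ within the admissible ratio cone and the floors $\vert1-\alpha\vert$ and $q_{\zeta}$. Your telescoping bound on $\Vert\bm{x}(k)-\bm{x}^*\Vert_E$ is just a slightly more explicit version of the paper's step that $\bm{x}(k+1)-\bm{x}(k)$ decays linearly, so no substantive difference remains.
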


Corollary 2 verifies that the proposed deviation-tracking method is able to eliminate the deviation $\bm{1}^T\bm{x}(k)-\bm{1}^T\bm{d}$ no matter whether it is caused by infeasible initialization or random disturbance. 
The reason why it is disturbance-resilient is that we adopt deviation feedback as shown in Fig. 1. The closed loop makes it more stable and exact than the weighted gradient method and dual splitting method, which adopt open loop to deal with the deviation $\bm{1}^T\bm{x}(k)-\bm{1}^T\bm{d}$ \cite{46}. 

It should be noted that both Corollary 1 and Corollary 2 shares the upper bound of stepsizes and the upper bound is independent of the parameter of disturbance, 
which shows the universal resilience property of distributed deviation-tracking algorithm. 

\subsection{Improving convergence rate}
In this section, we will analyze the optimal stepsize for fastest convergence rate over fixed communication networks.
The result can also be applied to stochastic networks, which will be validated in simulations.

\begin{thm}\label{fixedbounded}
Let $s_1'=\max\{\bar{\lambda}_{2e}-\alpha, \alpha-\underline{\lambda}_{ne}\}$ and $s_2'=\max\{|1-\beta K_1'|, |\beta K_2'-1|\}$, 
where $K_1'=\frac{\underline{\eta}(1-\bar{\lambda}_{2e})\left[\overline{\varphi}+(n-1)\underline{\eta}\right]}{\sqrt{n\left[\overline{\varphi}^2+(n-1)\underline{\eta}^2\right]}}$ and $K_2'=\overline{\varphi}(1-\underline{\lambda}_{ne})$.
Consider the sequence $\{\bm{x}(k)\}_{k=0}^{\infty}$ and $\{\bm{y}(k)\}_{k=0}^{\infty}$ generated by \eqref{ald1} and \eqref{ald2}. Suppose Assumptions 1-4 hold and $\bm{W}(k)=\bm{W}, \forall k$. For any $\bm{x}(0)\in\mathbb{R}^{n}$ and $(\alpha,\beta)$ satisfying
\begin{equation}
\begin{split}
&\alpha<1-\underline{\lambda}_{ne}, \ \beta<\frac{1}{K_2'},
\\&\alpha\beta\overline{\varphi}(1-\underline{\lambda}_{ne})<(1-s_1')(1-s_2'),
\end{split}
\label{afarange}
\end{equation}
there exist $m_1, m_2>0$ and $q\in(0,1)$ such that for any $k\geq0$,
\begin{align}
&\Vert\bm{y}(k)\Vert_E\leq m_1q^k\label{res2},
\\&\Vert\nabla\check{\bm{f}}(\bm{x}(k))\Vert_E\leq m_2q^k\label{res3}.
\end{align}

\end{thm}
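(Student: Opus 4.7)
The plan is to follow exactly the induction strategy used for Theorems 1 and 2, but to exploit the fact that $\bm{W}(k)\equiv\bm{W}$ is deterministic to replace the mean-square contraction factors $s_1$ and $s_2$ by the tighter deterministic factors $s_1'$ and $s_2'$ provided by Lemma~1. Note that under a fixed network, $\bar{\lambda}_{2e}=\lambda_2(\bm{W})$ and $\underline{\lambda}_{ne}=\lambda_n(\bm{W})$, so Assumption~3 collapses to the usual connectedness of $\bm{W}$.

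First I would average \eqref{ald1}--\eqref{ald2} by multiplying by $\bm{11}^T/n$. The weighted-gradient term vanishes because $\bm{W}$ is doubly stochastic, yielding $\bar{\bm{y}}(k)=\bar{\bm{x}}(k)-\bar{\bm{d}}$ and $\bar{\bm{x}}(k+1)-\bar{\bm{d}}=(1-\alpha)(\bar{\bm{x}}(k)-\bar{\bm{d}})+\bar{\bm{\zeta}}(k)$. Combined with Assumption~\ref{noi}, this gives $\Vert\bar{\bm{y}}(k)\Vert_E\le m_0 q_0^k$ for suitable $m_0>0$ and $q_0\in(\max\{|1-\alpha|,q_{\zeta}\},1)$, exactly as in the opening step of Theorem~2. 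Since $\bm{y}(k)=\bar{\bm{y}}(k)+\check{\bm{y}}(k)$, the remaining task is to prove linear decay of $\Vert\check{\bm{y}}(k)\Vert_E$ and $\Vert\nabla\check{\bm{f}}(\bm{x}(k))\Vert_E$.

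Next, using the Lipschitz/strong-convexity diagonal $\bm{\Theta}(k)$ with $\eta_i\le\theta_i(k)\le\varphi_i$, I would repeat the manipulations that produced \eqref{nnth1e5}--\eqref{nnth1ex7} to arrive at
\begin{align}
\nabla\check{\bm{f}}(\bm{x}(k+1))&=\bigl(\bm{I}-\beta\bm{L}\bm{\Theta}(k)(\bm{I}-\bm{W})\bigr)\nabla\check{\bm{f}}(\bm{x}(k))\notag\\
&\quad-\alpha\bm{L}\bm{\Theta}(k)\check{\bm{y}}(k)-\alpha\bm{L}\bm{\Theta}(k)\bar{\bm{y}}(k)+\bm{L}\bm{\Theta}(k)\bm{\zeta}(k),\notag\\
\check{\bm{y}}(k+1)&=(\bm{W}-\alpha\bm{I})\check{\bm{y}}(k)-\beta(\bm{I}-\bm{W})\nabla\check{\bm{f}}(\bm{x}(k))+\check{\bm{\zeta}}(k).\notag
\end{align}
The key technical step is bounding the two ``self'' operators in a deterministic, rather than mean-square, sense. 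Applying Lemma~1 with $\bm{A}=\bm{W}$ gives $\Vert(\bm{I}-\beta\bm{L}\bm{\Theta}(k)(\bm{I}-\bm{W}))\bm{v}\Vert_2\le s_2'\Vert\bm{v}\Vert_2$ for every $\bm{v}$ with $\bm{1}^T\bm{v}=0$; and because $\check{\bm{y}}(k)\perp\bm{1}$, the spectrum of $\bm{W}-\alpha\bm{I}$ on $\{\bm{1}\}^\perp$ lies in $[\underline{\lambda}_{ne}-\alpha,\bar{\lambda}_{2e}-\alpha]$, so $\Vert(\bm{W}-\alpha\bm{I})\check{\bm{y}}(k)\Vert_2\le s_1'\Vert\check{\bm{y}}(k)\Vert_2$. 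These are the deterministic analogues of the bounds used in Theorem~2.

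With these contractions in hand, the proof closes by the same induction as in Theorems~1--2: assuming $\Vert\check{\bm{y}}(k)\Vert_E\le m_1'q'^k$ and $\Vert\nabla\check{\bm{f}}(\bm{x}(k))\Vert_E\le m_2'q'^k$, the triangle inequality applied to the two recursions produces
\begin{align}
\Vert\check{\bm{y}}(k+1)\Vert_E&\le\bigl[s_1'm_1'+\beta(1-\underline{\lambda}_{ne})m_2'+m_\zeta\bigr]q'^k,\notag\\
\Vert\nabla\check{\bm{f}}(\bm{x}(k+1))\Vert_E&\le\bigl[s_2'm_2'+\alpha\overline{\varphi}(m_1'+m_0)+\overline{\varphi}m_\zeta\bigr]q'^k.\notag
\end{align}
The hypothesis \eqref{afarange} gives $s_1',s_2'\in(0,1)$ and $\alpha\beta\overline{\varphi}(1-\underline{\lambda}_{ne})<(1-s_1')(1-s_2')$, which, by the same chain of inequalities used to pass from \eqref{mss1} to \eqref{mth1ex11}, lets me choose $m_1',m_2'$ sufficiently large and $q'\in(\max\{q_0,q_\zeta\},1)$ sufficiently close to $1$ so that both right-hand sides are bounded by $m_1'q'^{k+1}$ and $m_2'q'^{k+1}$ respectively, closing the induction. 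Setting $m_1=m_0+m_1'$, $m_2=m_2'$, $q=q'$ then yields \eqref{res2}--\eqref{res3}. The main obstacle is purely bookkeeping the constants so the induction closes; the only new ingredient compared to Theorem~2 is the sharper, deterministic use of Lemma~1 afforded by the fixed network.
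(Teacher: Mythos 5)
Your proposal mirrors the paper's own proof of this theorem (Appendix D): the same mean/deviation decomposition giving $\Vert\bar{\bm{y}}(k)\Vert_E\le m_0q_0^k$ with $q_0>\max\{\vert1-\alpha\vert,q_{\zeta}\}$, the same recursions for $\nabla\check{\bm{f}}(\bm{x}(k))$ and $\check{\bm{y}}(k)$ with the deterministic contraction factors $s_1'$ and $s_2'$, and the same induction closed via the stepsize condition \eqref{afarange}. The one loose point in your argument --- that Lemma 1 only bounds $\Vert\bm{L}\bm{\Theta}(k)(\bm{I}-\bm{W})\bm{v}\Vert_2/\Vert\bm{v}\Vert_2$ and does not by itself yield $\Vert(\bm{I}-\beta\bm{L}\bm{\Theta}(k)(\bm{I}-\bm{W}))\bm{v}\Vert_2\le s_2'\Vert\bm{v}\Vert_2$ because this operator is not symmetric --- is shared verbatim by the paper's proof, so it is not a deviation on your part.
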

The proof is shown in Appendix D.

\begin{thm}\label{optimalstepsize}
Suppose Assumptions 1-3 hold. The optimal stepsizes of $\alpha$ and $\beta$ are expressed as
\begin{align}
&\beta_{op}=\frac{2}{K_1'+K_2'},\label{opbeta}
\\&\alpha_{op}=\min\Bigg\{\frac{K_1'(1-\bar{\lambda}_{2e})}{K_2'}, \frac{K_1'(1+\underline{\lambda}_{ne})}{2K_1'+K_2'},\notag
\\&\frac{1}{2}\Big(1+\bar{\lambda}_{2e}-2\beta_{op} K_2'\notag
\\&+\sqrt{4(\beta_{op} K_2'-1)^2+(1-\bar{\lambda}_{2e})(5-\bar{\lambda}_{2e})}\Big),\notag
\\&\frac{1}{2}\left(3+\underline{\lambda}_{ne}-\sqrt{(3+\underline{\lambda}_{ne})^2-4\beta_{op}K_1'(1+\underline{\lambda}_{ne})}\right) \Bigg\}.\label{opafa}
\end{align}
\end{thm}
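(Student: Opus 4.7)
The plan is to minimize the convergence rate established in Corollary 2 (specialized to the fixed-network setting of Theorem 3), which has the form
\[
R(\alpha,\beta) \;=\; \max\bigl\{T_1,\,T_2,\,T_3\bigr\},
\]
where $T_1 = s_1' + \alpha\beta K_2'/(1-s_2')$, $T_2 = s_2' + \alpha\beta K_2'/(1-s_1')$, and $T_3 = |1-\alpha|$, over all $(\alpha,\beta)$ in the admissible region \eqref{afarange}.

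First I would decouple the choice of $\beta$. The stepsize $\beta$ enters $R$ only through $s_2'$ and through the linear coupling $\alpha\beta K_2'$ in the numerators of $T_1$ and $T_2$. Since the denominator $1-s_2'$ shrinks as $s_2'$ grows, it is beneficial to make $s_2'$ as small as possible first. As $s_2' = \max\{|1-\beta K_1'|,|\beta K_2'-1|\}$ is piecewise linear in $\beta$, it is minimized where the two pieces coincide, giving $\beta_{op} = 2/(K_1'+K_2')$. This choice yields the useful identities $1-s_2' = \beta_{op}K_1'$, $\beta_{op}K_2' - 1 = s_2'$, and $\beta_{op}(K_2'-K_1') = 2(\beta_{op}K_2'-1)$, which will simplify the subsequent algebra. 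After substitution, $T_1$ collapses to $s_1' + \alpha K_2'/K_1'$, and $R(\alpha,\beta_{op})$ becomes a piecewise-rational function of $\alpha$ alone.

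Next I would minimize $R(\alpha,\beta_{op})$ over $\alpha$. Using $s_1' = \max\{\bar{\lambda}_{2e}-\alpha,\alpha-\underline{\lambda}_{ne}\}$, the functions $T_1$ and $T_2$ each admit two expressions depending on which piece of $s_1'$ is active, while $T_3 = 1-\alpha$ on the feasible range $\alpha<1$. On each piece, $T_1$ is affine and strictly increasing in $\alpha$ (since $K_2'>K_1'$ by Lemma 1), $T_2$ is increasing (its numerator is linear in $\alpha$ and its denominator $1-s_1'$ is either constant or increasing in $\alpha$), and $T_3$ is strictly decreasing. Since $T_3\to 1$ while $T_1,T_2\to s_1',s_2'<1$ as $\alpha\to 0^+$, the max is initially governed by $T_3$ and then by one of $T_1,T_2$; consequently the optimum is the smallest $\alpha>0$ at which $T_3$ meets $\max\{T_1,T_2\}$. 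I would therefore impose the four balance equations $\{T_1=T_3,T_2=T_3\}$ under each case of $s_1'$. The two equations $T_1=T_3$ are linear in $\alpha$ and directly give the first two candidates $K_1'(1-\bar{\lambda}_{2e})/K_2'$ and $K_1'(1+\underline{\lambda}_{ne})/(2K_1'+K_2')$ in \eqref{opafa}; the two equations $T_2=T_3$ become quadratic in $\alpha$ because $1-s_1'$ contributes an extra factor, and selecting the appropriate root—the $+$ root in the case $s_1'=\bar{\lambda}_{2e}-\alpha$ (needed to cancel the possibly negative $1+\bar{\lambda}_{2e}-2\beta_{op}K_2'$) and the $-$ root in the case $s_1'=\alpha-\underline{\lambda}_{ne}$ (to obtain the smaller positive root in $(0,1)$)—produces the remaining two expressions. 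Taking the minimum of the four candidates then gives $\alpha_{op}$.

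The main obstacle is justifying that the minimax is indeed attained at the smallest of these four balance points, rather than at an interior point where $T_1=T_2$ dominates without ever equalling $T_3$, or on the boundary of \eqref{afarange}. I would handle this by the monotonicity argument sketched above: on each piece of $s_1'$, $R(\alpha,\beta_{op})$ equals $T_3$ up to the first balance point and then switches to one of $T_1,T_2$, after which it is non-decreasing, so the smallest balance point is the global minimizer. A final algebraic check that the chosen $(\alpha_{op},\beta_{op})$ still satisfies the coupled feasibility condition $\alpha\beta\overline{\varphi}(1-\underline{\lambda}_{ne}) < (1-s_1')(1-s_2')$ in \eqref{afarange} — which is automatic whenever $R<1$, precisely the content of the balance conditions — completes the proof.
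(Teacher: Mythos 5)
Your proposal follows essentially the same route as the paper's proof: minimize the Corollary-2 rate $\max\{g_1,g_2,|1-\alpha|\}$, pick $\beta_{op}=2/(K_1'+K_2')$ where the two pieces of $s_2'$ meet (so that $g_1$ becomes independent of $\beta$ and $g_2$ is minimized), and then, since $g_1,g_2$ increase in $\alpha$ while $|1-\alpha|$ decreases, obtain $\alpha_{op}$ as the minimum of the roots of the balance equations $1-\alpha=g_i(\alpha,\beta_{op})$ over the two cases of $s_1'$, which yields exactly the four candidates in \eqref{opafa}. Your additional remarks on root selection, the monotonicity justification of the balance-point minimax, and the feasibility check are consistent with (and slightly more detailed than) the paper's argument.
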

\begin{proof}
Similar to Corollary 2, the convergence rate is 
\begin{align}
\max\Big\{&s_1'+\frac{\alpha\beta\overline{\varphi}(1-\underline{\lambda}_{ne})}{1-s_2'}, s_2'+\frac{\alpha\beta\overline{\varphi}(1-\underline{\lambda}_{ne})}{1-s_1'}, \vert1-\alpha\vert, q_{\zeta}\Big\}.\label{lowerbound}
\end{align}

First, we consider the optimal value of $\beta$.

Let $g_1(\alpha, \beta)=s_1'+\frac{\alpha\beta\overline{\varphi}(1-\underline{\lambda}_{ne})}{1-s_2'}$ and $g_2(\alpha, \beta)=s_2'+\frac{\alpha\beta\overline{\varphi}(1-\underline{\lambda}_{ne})}{1-s_1'}$. If $\beta>\frac{2}{K_1'+K_2'}$, we obtain $s_2'=\beta K_2'-1$. Then, we have 
\begin{align}
g_1(\alpha,\beta)&=s_1'+\frac{\alpha\beta\overline{\varphi}(1-\underline{\lambda}_{ne})}{2-\beta K_2'}, \notag
\\g_2(\alpha,\beta)&=\beta K_2'-1+\frac{\alpha\beta\overline{\varphi}(1-\underline{\lambda}_{ne})}{1-s_1'}.\notag
\end{align}
Both $g_1(\alpha, \beta)$ and $g_2(\alpha, \beta)$ are increasing with respect to $\beta$. Similarly, we can obtain that $g_1(\alpha,\beta)$ is independent of $\beta$ and $g_2(\alpha,\beta)$ is decreasing when $\beta\leq\frac{2}{K_1'+K_2'}$. Therefore, we obtain the best value of $\beta_{op}=\frac{2}{K_1'+K_2'}$.

Next, we consider the optimal value of $\alpha$ when $\beta=\beta_{op}$.
If $\alpha\leq\frac{\bar{\lambda}_{2e}+\underline{\lambda}_{ne}}{2}$, we obtain $s_1'=\bar{\lambda}_{2e}-\alpha$ and 
\begin{align}
g_1(\alpha,\beta_{op})&=\bar{\lambda}_{2e}-\alpha+\frac{\alpha\overline{\varphi}(1-\underline{\lambda}_{ne})}{K_1'}, \notag
\\g_2(\alpha,\beta_{op})&=\frac{K_2'-K_1'}{K_1'+K_2'}+\frac{2K_1'}{K_1'+K_2'}\frac{\alpha\overline{\varphi}(1-\underline{\lambda}_{ne})}{1-\bar{\lambda}_{2e}+\alpha}.\notag
\end{align}
It is obvious that $g_2(\alpha,\beta)$ is increasing with respect to $\alpha$. Due to $\overline{\varphi}(1-\underline{\lambda}_{ne})=K_2'>K_1'$, $g_1(\alpha,\beta)$ is also increasing with respect to $\alpha$. 

When $\alpha>\frac{\bar{\lambda}_{2e}+\underline{\lambda}_{ne}}{2}$, we have $s_1'=\alpha-\underline{\lambda}_{ne}$,
\begin{align}
g_1(\alpha,\beta_{op})&=\alpha-\underline{\lambda}_{ne}+\frac{\alpha\overline{\varphi}(1-\underline{\lambda}_{ne})}{K_1'}, \notag
\\g_2(\alpha,\beta_{op})&=\frac{K_2'-K_1'}{K_1'+K_2'}+\frac{2K_1'}{K_1+K_2'}\frac{\alpha\overline{\varphi}(1-\underline{\lambda}_{ne})}{1+\underline{\lambda}_{ne}-\alpha}.\notag
\end{align}
Both of $g_1(\alpha,\beta_{op})$ and $g_2(\alpha, \beta_{op})$ are also increasing with respect to $\alpha$.
However, $1-\alpha$ is decreasing when $\alpha<1$. Then the optimal $\alpha$ should be
\begin{align}
\alpha_{op}=\min\{\alpha_{op1}, \alpha_{op2}\},\notag
\end{align}
where $\alpha_{op1}$ and $\alpha_{op2}$ satisfy
\begin{align}
1-\alpha_{op1}=g_1(\alpha_{op1}, \beta_{op}); 1-\alpha_{op2}=g_2(\alpha_{op2}, \beta_{op}).\notag
\end{align}
Finally, we calculate the optimal $\alpha$ and obtain \eqref{opafa}.
\end{proof}
The result in Theorem \ref{optimalstepsize} is obtained based on the convergence result of Theorem \ref{fixedbounded}. 
In some cases, the result in Theorem \ref{optimalstepsize} may not be the optimal stepsize for fastest convergence rate, but it is still better than most stepsizes within the upperbound. 
The results are also suitable for stochastic networks with communication failure and disturbance, which will be shown in Section VII.

\section{Convergence with uncoordinated stepsizes}
In above analysis, all agents employ absolutely identical stepsizes. In distributed network, realizing uniform stepsizes requires consensus protocol, which is based on a reliable communication network. It is difficult to coordinate these stepsizes over stochastic networks. 
In this section, we analyze the convergence of the proposed algorithm with uncoordinated stepsizes. 

For each agent $i$, the stepsizes are denoted by $\alpha_i$ and $\beta_i$. Let $\overline{\alpha}=\max_i\{\alpha_i\}$, $\overline{\beta}=\max_i\{\beta_i\}$, $\underline{\alpha}=\min_i\{\alpha_i\}$, $\underline{\beta}=\min_i\{\beta_i\}$, $\bm{D}_{\alpha}=diag\{\alpha_1,...,\alpha_n\}$ and $\bm{D}_{\beta}=diag\{\beta_1,...,\beta_n\}$. Eqs. \eqref{al1} and \eqref{al2} are rewritten as 
\begin{align}
\bm{x}(k+1)=&\bm{x}(k)-\bm{D}_{\alpha}\bm{y}(k)-\bm{D}_{\beta}(\bm{I}-\bm{W}(k))\nabla\bm{f}(\bm{x}(k))\label{algg1},
\\\bm{y}(k+1)=&\bm{W}(k)\bm{y}(k)+\bm{x}(k+1)-\bm{x}(k)\label{algg2}.
\end{align}
We will prove \eqref{algg1} and \eqref{algg2} converge linearly to the optimal solution in mean square with uncoordinated stepsizes
\begin{lem}\label{unclem}
Let $K_1''=\frac{\underline{\beta}\,\underline{\eta}(1-\bar{\lambda}_{2e})\left[\overline{\beta}\,\overline{\varphi}+(n-1)\underline{\eta}\,\underline{\beta}\right]}{\sqrt{n\left[\overline{\beta}^2\overline{\varphi}^2+(n-1)\underline{\eta}^2\underline{\beta}^2\right]}}$, $K_2''=\overline{\beta}\,\overline{\varphi}(1-\underline{\lambda}_{ne})$ and $\bm{T}_d=\bm{L}\bm{\Gamma}\bm{D}_{\beta}(\bm{I}-\bm{W})$,
where $\bm{W}$ is a stochastic matrix satisfying Assumption 3. For any vector $\bm{v}\neq0$ such that $\bm{1}^T\bm{v}=0$, we have
\begin{align}
\Vert(\bm{I}-\bm{T}_d)\bm{v}\Vert_E^2\leq (1+K_2''^2-2K_1'')\Vert\bm{v}\Vert_E^2.\notag
\end{align}
\end{lem}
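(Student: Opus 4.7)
The plan is to reduce Lemma \ref{unclem} to a direct generalization of Lemma 2 by absorbing the diagonal stepsize matrix into $\bm{\Gamma}$. Define $\tilde{\bm{\Gamma}}\triangleq\bm{\Gamma}\bm{D}_{\beta}=\mathrm{diag}\{\gamma_i\beta_i\}$. Since $\eta_i\le\gamma_i\le\varphi_i$ and $\underline{\beta}\le\beta_i\le\overline{\beta}$, the entries of $\tilde{\bm{\Gamma}}$ satisfy
\[
\underline{\beta}\,\underline{\eta}\ \le\ \gamma_i\beta_i\ \le\ \overline{\beta}\,\overline{\varphi},\quad\forall i.
\]
Crucially, $\bm{T}_d=\bm{L}\tilde{\bm{\Gamma}}(\bm{I}-\bm{W})$ now has exactly the structural form $\bm{L}\bm{\Gamma}(\bm{I}-\bm{A})$ studied in Lemma 1, but with the pair $(\underline{\eta},\overline{\varphi})$ replaced by the effective pair $(\underline{\beta}\,\underline{\eta},\overline{\beta}\,\overline{\varphi})$. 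Thus $K_1''$ and $K_2''$ are nothing more than $K_1$ and $K_2$ with this substitution.

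Next I would mimic the expansion used to prove Lemma 2. Since $\bm{1}^T\bm{v}=0$ gives $\bm{L}\bm{v}=\bm{v}$, I would write
\[
\Vert(\bm{I}-\bm{T}_d)\bm{v}\Vert_E^2=\Vert\bm{v}\Vert_E^2-2\,\mathbb{E}\{\bm{v}^T\bm{T}_d\bm{v}\}+\mathbb{E}\{\Vert\bm{T}_d\bm{v}\Vert_2^2\}.
\]
For the quadratic term I would invoke the upper half of Lemma 1 applied pointwise with the effective parameters to obtain $\Vert\bm{T}_d\bm{v}\Vert_2\le\overline{\beta}\,\overline{\varphi}(1-\lambda_n(\bm{W}(k)))\Vert\bm{v}\Vert_2$, and then take expectation and invoke the bound $\mathbb{E}\{(1-\lambda_n(\bm{W}(k)))^2\}\le(1-\underline{\lambda}_{ne})^2$ that is used to define $K_2$ in Lemma 2 (and is inherited from Assumption~\ref{network} together with Lemma 3). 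This delivers $\mathbb{E}\{\Vert\bm{T}_d\bm{v}\Vert_2^2\}\le (K_2'')^2\,\Vert\bm{v}\Vert_E^2$. For the cross term, using $\bm{L}\bm{v}=\bm{v}$ and the symmetry of $\bm{W}(k)$, I would write $\bm{v}^T\bm{T}_d\bm{v}=\bm{v}^T\tilde{\bm{\Gamma}}\bm{v}-\bm{v}^T\tilde{\bm{\Gamma}}\bm{W}(k)\bm{v}$, take expectation so that only $\mathbb{E}\{\bm{W}(k)\}$ enters, and then apply the lower bound of Lemma 1 (again with the effective parameters and with $\bm{A}=\mathbb{E}\{\bm{W}(k)\}$, whose second-largest eigenvalue is bounded by $\bar{\lambda}_{2e}$) to obtain $\mathbb{E}\{\bm{v}^T\bm{T}_d\bm{v}\}\ge K_1''\Vert\bm{v}\Vert_E^2$.

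Combining the two bounds yields $\Vert(\bm{I}-\bm{T}_d)\bm{v}\Vert_E^2\le(1+K_2''^2-2K_1'')\Vert\bm{v}\Vert_E^2$, completing the proof.

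The main obstacle, as in Lemma 2, is the bridging between the per-realization bounds produced by Lemma 1 and the expectation-level bounds involving $\bar{\lambda}_{2e}$ and $\underline{\lambda}_{ne}$; the pointwise eigenvalues $\lambda_2(\bm{W}(k))$ and $\lambda_n(\bm{W}(k))$ differ in general from those of $\mathbb{E}\{\bm{W}(k)\}$, so care is needed to pull the eigenvalue factors outside the expectation correctly. Because $\bm{D}_\beta$ enters only through the diagonal effective weighting $\tilde{\bm{\Gamma}}$, this bridging step is structurally identical to the one in the proof of Lemma 2 (Appendix B), and therefore no new technical difficulty arises beyond carefully propagating $\underline{\beta}$ and $\overline{\beta}$ through the constants.
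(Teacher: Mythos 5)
Your reduction is exactly the paper's own argument: the paper disposes of this lemma with a single remark that it is Lemma 2 with the diagonal matrix $\bm{\Gamma}$ replaced by $\bm{\Gamma}\bm{D}_{\beta}$, whose entries lie in $[\underline{\beta}\,\underline{\eta},\,\overline{\beta}\,\overline{\varphi}]$ -- precisely your effective-parameter substitution -- and your handling of the cross term (take expectation so that only $\mathbb{E}\{\bm{W}\}$ enters, then apply the lower bound of Lemma 1 with $\bm{A}=\mathbb{E}\{\bm{W}\}$) is the same as in Appendix B. One caveat on the quadratic term: the inequality you invoke, $\mathbb{E}\{(1-\lambda_n(\bm{W}(k)))^2\}\leq(1-\underline{\lambda}_{ne})^2$, is not valid in general -- by concavity of $\lambda_n$ over symmetric matrices and Jensen's inequality the comparison runs the other way -- and it is not what the paper does: Appendix B bounds $\Vert\bm{T}_s\bm{v}\Vert_2$ pointwise by $\overline{\varphi}\,(1-\underline{\lambda}_n)\Vert\bm{v}\Vert_2$ with $\underline{\lambda}_n=\inf_k\lambda_n(\bm{W}(k))$, the almost-sure eigenvalue bound introduced in Section II, so the constant your pointwise-then-average route honestly yields is $\overline{\beta}\,\overline{\varphi}\,(1-\underline{\lambda}_n)$ rather than $K_2''=\overline{\beta}\,\overline{\varphi}\,(1-\underline{\lambda}_{ne})$; the appearance of $\underline{\lambda}_{ne}$ here (and in $K_2$ of Lemma 2) is an inconsistency between statement and proof already present in the paper, not something you need to reproduce. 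With $\underline{\lambda}_n$ in place of $\underline{\lambda}_{ne}$ in the quadratic term, the rest of your argument goes through unchanged.
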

Lemma \ref{unclem} is an extension result of Lemma \ref{lem1} with replacing diagonal matrix $\bm{\Gamma}$ by another diagonal matrix $\bm{\Gamma}\bm{D}_{\beta}$.
\begin{thm}
Let $s_4=1-\sum_{i=1}^{n}\alpha_i$, $s_5=\overline{\alpha}^2+2\overline{\alpha}+\overline{\lambda}_{2s}$ and $s_6=\sqrt{1+K_2''^2-2K_1''}$.
Consider the sequence $\{\bm{x}(k)\}_{k=0}^{\infty}$ and $\{\bm{y}(k)\}_{k=0}^{\infty}$ generated by \eqref{algg1} and \eqref{algg2}. Suppose Assumptions 1-4 hold. For any $\bm{x}(0)\in\mathbb{R}^{n}$ and $(\alpha,\beta)$ satisfying
\begin{align}
&\sum_{i=1}^{n}\alpha_i<2, \ \overline{\alpha}<\sqrt{2-\overline{\lambda}_{2s}}-1,\ K_2''^2<2K_1'',
\label{range1}
\\&(1-s_4)(1-s_5)(1-s_6)-\overline{\beta}(1-\underline{\lambda}_n)\overline{\varphi}\,\overline{\alpha}(2-s_4-s_5)\notag
\\&<\overline{\alpha}^2(1-s_6)+2\overline{\alpha}^2\overline{\beta}(1-\underline{\lambda}_n)\overline{\varphi},
\label{range2}
\end{align}
there exist $m_{1u}, m_{2u}, m_{3u}>0$ and $q_{u}\in(0,1)$ such that for any $k\geq0$,
\begin{align}
&\Vert\nabla\check{\bm{f}}(\bm{x}(k))\Vert_E\leq m_{1u}q_{u}^k,\label{res1}
\\&\Vert\bar{\bm{y}}(k)\Vert_E\leq m_{2u}q_{u}^k,\label{res2},
\\&\Vert\check{\bm{y}}(k)\Vert_E\leq m_{3u}q_{u}^k.\label{res3}
\end{align}

\end{thm}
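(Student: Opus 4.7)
The plan is to mirror the induction scheme of Theorem 1, adapted to accommodate the diagonal stepsize matrices $\bm{D}_\alpha$ and $\bm{D}_\beta$. I would show coupled linear recursions for the three quantities $\bar{\bm{y}}(k)$, $\check{\bm{y}}(k)$, and $\nabla\check{\bm{f}}(\bm{x}(k))$ in the expectation norm, then use mathematical induction to produce exponentially decaying upper bounds with a common rate $q_u\in(0,1)$.

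First I would derive the averaged $\bm{x}$-dynamics by multiplying \eqref{algg1} by $\tfrac{\bm{1}^T}{n}$. Unlike the coordinated case, now $\tfrac{1}{n}\bm{1}^T\bm{D}_\alpha\bm{y}(k)=\tfrac{1}{n}\sum_i\alpha_i\bar{y}(k)+\tfrac{1}{n}\sum_i\alpha_i\check{y}_i(k)$ and the second cross term $\tfrac{1}{n}\bm{1}^T\bm{D}_\beta(\bm{I}-\bm{W}(k))\nabla\bm{f}(\bm{x}(k))=\tfrac{1}{n}(\bm{\beta}-\bar\beta\bm{1})^T(\bm{I}-\bm{W}(k))\nabla\check{\bm{f}}(\bm{x}(k))$ no longer vanishes. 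This produces a recursion in which the pure contraction factor $s_4=1-\sum_i\alpha_i$ on $\bar{\bm{y}}(k)$ appears alongside coupling terms bounded by $\Vert\check{\bm{y}}(k)\Vert_E$ and $\Vert\nabla\check{\bm{f}}(\bm{x}(k))\Vert_E$ with coefficients proportional to $\overline{\alpha}$ and $\overline{\beta}$, respectively.

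Next, multiplying \eqref{algg1} and \eqref{algg2} by $\bm{L}$, I obtain
\begin{align}
\check{\bm{y}}(k+1)&=(\bm{W}(k)-\bm{D}_\alpha)\check{\bm{y}}(k)-\bm{D}_\alpha\bar{\bm{y}}(k)\bm{1}-\bm{D}_\beta(\bm{I}-\bm{W}(k))\nabla\check{\bm{f}}(\bm{x}(k))+\text{centering terms},\notag
\end{align}
so that Lemma 3 together with the elementary bound $\Vert\bm{D}_\alpha\bm{v}\Vert_E\le\overline{\alpha}\Vert\bm{v}\Vert_E$ gives contraction factor $s_5=\overline{\alpha}^2+2\overline{\alpha}+\overline{\lambda}_{2s}$. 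For the gradient disagreement, I introduce the mean-value diagonal $\bm{\Theta}(k)$ from Assumptions \ref{lip}--\ref{convex} as in \eqref{th1e2}, substitute into \eqref{algg1}, and apply Lemma \ref{unclem} (with the diagonal $\bm{\Gamma}\bm{D}_\beta$) to obtain the contraction factor $s_6=\sqrt{1+K_2''^2-2K_1''}$; this is where the hypothesis $K_2''^2<2K_1''$ in \eqref{range1} is used.

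Having obtained three coupled inequalities of the form $\Vert\bar{\bm{y}}(k+1)\Vert_E,\ \Vert\check{\bm{y}}(k+1)\Vert_E,\ \Vert\nabla\check{\bm{f}}(\bm{x}(k+1))\Vert_E\le \bm{M}\,[\Vert\bar{\bm{y}}(k)\Vert_E,\Vert\check{\bm{y}}(k)\Vert_E,\Vert\nabla\check{\bm{f}}(\bm{x}(k))\Vert_E]^T$, I would proceed exactly as in Theorem 1: show base case for $k=0$, then inductively pick $m_{1u},m_{2u},m_{3u}>0$ and $q_u\in(\max\{q_\zeta,\text{auxiliary rate}\},1)$ satisfying
\begin{align}
s_4 m_{2u}+\tfrac{\overline{\alpha}}{n}m_{3u}+\tfrac{\overline{\beta}(1-\underline{\lambda}_n)}{n}m_{1u}+m_\zeta&\le q_u\, m_{2u},\notag\\
s_5 m_{3u}+\overline{\alpha}m_{2u}+\overline{\beta}(1-\underline{\lambda}_n)m_{1u}+m_\zeta&\le q_u\, m_{3u},\notag\\
s_6 m_{1u}+\overline{\alpha}\,\overline{\varphi}(m_{3u}+m_{2u})+\overline{\varphi}m_\zeta&\le q_u\, m_{1u}.\notag
\end{align}
The simultaneous solvability reduces to $\rho(\bm{M})<1$, which, after eliminating $m_{3u}$ between the first two rows and substituting into the third, is exactly the algebraic condition \eqref{range2}; together with \eqref{range1} this also ensures $s_4,s_5,s_6\in(0,1)$.

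The main obstacle is the bookkeeping in the final step: because uncoordinated $\bm{\beta}$ breaks the clean decoupling between the averaged and consensus subspaces, the feasibility region for $(m_{1u},m_{2u},m_{3u})$ is defined by a three-dimensional linear system whose determinantal condition does not simplify as neatly as in the coordinated case. Translating $\rho(\bm{M})<1$ into the explicit polynomial inequality \eqref{range2} is the central calculation; once this is established, the induction closes and the linear convergence of all three quantities, and hence of $\bm{x}(k)$ to $\bm{x}^*$ in mean square by Proposition \ref{optimalcondition}, follows as in Corollary 2.
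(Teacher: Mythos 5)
Your proposal follows essentially the same route as the paper's proof: the same decomposition into $\bar{\bm{y}}(k)$, $\check{\bm{y}}(k)$ and $\nabla\check{\bm{f}}(\bm{x}(k))$, the same three coupled recursions with contraction factors $s_4$, $s_5$, $s_6$ (with Lemma \ref{unclem} supplying $s_6$), and the same induction in which positive constants $m_{1u},m_{2u},m_{3u}$ are chosen to satisfy a linear system whose feasibility is exactly what conditions \eqref{range1}--\eqref{range2} guarantee. The only cosmetic differences are that the paper verifies feasibility by directly eliminating one constant and reducing \eqref{range2} to a two-ratio inequality rather than invoking a spectral-radius criterion, and that it does not carry the disturbance terms $m_{\zeta}$ since \eqref{algg1}--\eqref{algg2} are stated without $\bm{\zeta}(k)$.
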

\begin{proof}
Multiplying both sides of \eqref{algg1} by $\frac{\bm{11}^T}{n}$, we have
\begin{align}
\bar{\bm{x}}(k+1)=&\bar{\bm{x}}(k)-\frac{\bm{11}^T}{n}\bm{D}_{\alpha}\bar{\bm{y}}(k)-\frac{\bm{11}^T}{n}\bm{D}_{\alpha}\check{\bm{y}}(k)\notag
\\&-\frac{\bm{11}^T}{n}\bm{D}_{\beta}(\bm{I}-\bm{W}(k))\nabla\check{\bm{f}}(\bm{x}(k)).\label{th5t1}
\end{align}
Then, multiplying both sides of \eqref{algg1} by $\bm{L}$ yields 
\begin{align}
\check{\bm{x}}(k+1)=&\check{\bm{x}}(k)-\bm{L}\bm{D}_{\alpha}\bar{\bm{y}}(k)-\bm{L}\bm{D}_{\alpha}\check{\bm{y}}(k)\notag
\\&-\bm{L}\bm{D}_{\beta}(\bm{I}-\bm{W}(k))\nabla\check{\bm{f}}(\bm{x}(k)).\label{th5t2}
\end{align}
Similarly, it follows from \eqref{algg2} that
\begin{align}
\bar{\bm{y}}(k+1)=&\bar{\bm{y}}(k)+\bar{\bm{x}}(k+1)-\bar{\bm{x}}(k),\label{th5t3}
\\\check{\bm{y}}(k+1)=&\bm{W}(k)\check{\bm{y}}(k)+\check{\bm{x}}(k+1)-\check{\bm{x}}(k).\label{th5t4}
\end{align}
Substituting \eqref{th5t1} and \eqref{th5t2} into \eqref{th5t3} and \eqref{th5t4} respectively yields
\begin{align}
\bar{\bm{y}}(k+1)=&(\bm{I}-\frac{\bm{11}^T}{n}\bm{D}_{\alpha})\bar{\bm{y}}(k)-\frac{\bm{11}^T}{n}\bm{D}_{\alpha}\check{\bm{y}}(k)\notag
\\&-\frac{\bm{11}^T}{n}\bm{D}_{\beta}(\bm{I}-\bm{W}(k))\nabla\check{\bm{f}}(\bm{x}(k)),\label{th5t5}
\\\check{\bm{y}}(k+1)=&(\bm{W}(k)-\bm{L}\bm{D}_{\alpha})\check{\bm{y}}(k)-\bm{L}\bm{D}_{\alpha}\bar{\bm{y}}(k)\notag
\\&-\bm{L}\bm{D}_{\beta}(\bm{I}-\bm{W}(k))\nabla\check{\bm{f}}(\bm{x}(k)).\label{th5t6}
\end{align}
Similar to \eqref{th1e4}, we have 
\begin{align}
\nabla\check{\bm{f}}(\bm{x}(k+1))=&\left(\bm{I}-\bm{L}\bm{\Theta}(k)\bm{D}_{\beta}(\bm{I}-\bm{W}(k))\right)\nabla\check{\bm{f}}(\bm{x}(k))\notag
\\&-\bm{L}\bm{\Theta}(k)\bm{D}_{\alpha}\check{\bm{y}}(k)-\bm{L}\bm{\Theta}(k)\bm{D}_{\alpha}\bar{\bm{y}}(k).\label{th5t7}
\end{align}

We use induction to prove \eqref{res1}-\eqref{res3}. 
When $k=0$, it is not difficult to find $m_{1u}, m_{2u}, m_{3u}>0$ and $q_{u}\in(0,1)$ such that \eqref{res1}-\eqref{res3} hold.
We assume when $k\leq\kappa$, \eqref{res1}-\eqref{res3} hold.
When $k=\kappa+1$, 
from \eqref{th5t5} and \eqref{th5t6}, we have 
\begin{align}
\Vert\bar{\bm{y}}(\kappa+1)\Vert_{E}\leq&s_4\Vert\bar{\bm{y}}(\kappa)\Vert_E+\overline{\alpha}\Vert\check{\bm{y}}(\kappa)\Vert_E\notag
\\&+\overline{\beta}(1-\underline{\lambda}_n)\Vert\nabla\check{\bm{f}}(\bm{x}(\kappa))\Vert_E\notag
\\\leq&s_4m_2q^{\kappa}+\overline{\alpha}m_3q^{\kappa}+\overline{\beta}(1-\underline{\lambda}_n)m_1q^{\kappa},\label{th5t11}
\\\Vert\check{\bm{y}}(\kappa+1)\Vert_E\leq&s_5\Vert\check{\bm{y}}(\kappa)\Vert_E+\overline{\alpha}\Vert\bar{\bm{y}}(\kappa)\Vert_E\notag
\\&+\overline{\beta}(1-\underline{\lambda}_n)\Vert\nabla\check{\bm{f}}(\bm{x}(\kappa))\Vert_E\notag
\\\leq&s_5m_3q^{\kappa}+\overline{\alpha}m_2q^{\kappa}+\overline{\beta}(1-\underline{\lambda}_n)m_1q^{\kappa}.\label{th5t12}
\end{align}
Then, it can be obtained from \eqref{th5t7} that 
\begin{align}
\Vert\nabla\check{\bm{f}}(\bm{x}(\kappa+1))\Vert_E\leq&s_6\Vert\nabla\check{\bm{f}}(\bm{x}(\kappa))\Vert_E\notag
\\&+\overline{\eta}\,\overline{\alpha}\Vert\check{\bm{y}}(\kappa)\Vert_E+\overline{\eta}\,\overline{\alpha}\Vert\bar{\bm{y}}(\kappa)\Vert_E\notag
\\\leq&s_6m_1q^{\kappa}+\overline{\eta}\,\overline{\alpha}m_2q^{\kappa}+\overline{\eta}\,\overline{\alpha}m_3q^{\kappa}.\label{th5t13}
\end{align}
Due to \eqref{range1}, we have $s_4$, $s_5$ and $s_6$ are all in $(0, 1)$. Then, it follows from \eqref{range2} that  
\begin{multline}
\frac{(1-s_4)(1-s_6)-\overline{\beta}(1-\underline{\lambda}_n)\overline{\varphi}\,\overline{\alpha}}{\overline{\alpha}(1-s_6)+\overline{\beta}(1-\underline{\lambda}_n)\overline{\varphi}\,\overline{\alpha}}
\\<\frac{\overline{\alpha}(1-s_6)+\overline{\beta}(1-\underline{\lambda}_n)\overline{\varphi}\,\overline{\alpha}}{((1-s_5)(1-s_6)-\overline{\beta}(1-\underline{\lambda}_n)\overline{\varphi}\,\overline{\alpha})}.\label{th5t10}
\end{multline}
Based on \eqref{th5t10},  there exist $m_2, m_3>0$ such that
\begin{multline}
(\overline{\alpha}(1-s_6)+\overline{\beta}(1-\underline{\lambda}_n)\overline{\varphi}\,\overline{\alpha})m_3\notag
\\<((1-s_4)(1-s_6)-\overline{\beta}(1-\underline{\lambda}_n)\overline{\varphi}\,\overline{\alpha})m_2,
\end{multline}
\begin{multline}
(\overline{\alpha}(1-s_6)+\overline{\beta}(1-\underline{\lambda}_n)\overline{\varphi}\,\overline{\alpha})m_3
\\<((1-s_5)(1-s_6)-\overline{\beta}(1-\underline{\lambda}_n)\overline{\varphi}\,\overline{\alpha})m_2.\notag
\end{multline}
Then, we obtain that there exist $m_1, m_2, m_3>0$ such that
\begin{align}
\overline{\alpha}m_3+\overline{\beta}(1-\underline{\lambda}_n)m_1<&(1-s_4)m_2,\label{th5t14}
\\\overline{\alpha}m_2+\overline{\beta}(1-\underline{\lambda}_n)m_1<&(1-s_5)m_3,\label{th5t15}
\\\overline{\eta}\  \overline{\alpha}m_2+\overline{\eta}\  \overline{\alpha}m_3<&(1-s_6)m_1.\label{th5t16}
\end{align}
Substituting \eqref{th5t14}-\eqref{th5t16} into \eqref{th5t11}-\eqref{th5t13} leads to \eqref{res1}-\eqref{res3} when $k=\kappa+1$. This completes the proof.
\end{proof}
From Theorem 5, we can deduce that the convergence point of $\bm{x}(k)$ satisfies the two conditions in Proposition \ref{optimalcondition}, which means that the proposed algorithm still converges linearly to the optimal solution in mean square even with uncoordinated stepsizes. Thus, agents do not need to coordinate stepsizes before implementing the algorithm.

\section{Numerical results}
In this section, simulations are provided to validate the effectiveness of the proposed deviation-tracking algorithm (DTA).
Considering a network of 10 agents, of which the topology is randomly generated. The cost functions are chosen to be quadratic functions: 
\begin{align}
f_i(x)=a_ix_i^2+b_ix_i+c_i, \forall i,\notag
\end{align}
which are widely used in economic dispatch problem to model the cost induced by producing certain amount of power \cite{51}. The values of $a$, $b$ and $c$ are randomly chosen within the ranges that are used in \cite{36}. Their values are shown as follows.
     
     \begin{center}  
       \begin{tabular}{|c|c|c|c|c|c|}
 \hline
 Agent&1&2&3&4&5\\
 \hline
 $a_i$&0.0314& 0.0342 &0.0392 &0.0379 &0.0366 \\
 $b_i$&0.352&0.349&0.278&0.331& 0.234\\
 $c_i$&0&0&0&0&0\\
  \hline
  Agent&6&7&8&9&10\\
   \hline
 $a_i$&0.0304 &0.0385 & 0.0393 &0.0368 &0.0396\\
 $b_i$&0.341&0.206&0.255&0.209&0.219\\
 $c_i$&0&0&0&0&0\\
         \hline
       \end{tabular}
  \end{center}  

\subsection{Convergence of deviation-tracking algorithm}

First, we simulate the convergence results of deviation-tracking algorithm with communication failure while no disturbance is injected. 
Each communication link fails with probability $1-\theta$, where $\theta=0.5$.
We select the stepsize $\alpha$ to be $0.1\alpha_{op}$, $0.3\alpha_{op}$, $0.5\alpha_{op}$, $0.7\alpha_{op}$, $0.8\alpha_{op}$, $0.9\alpha_{op}$, $0.95\alpha_{op}$ and $\alpha_{op}$, where $\alpha_{op}$ is calculated by \eqref{opafa}. The stepsizes are all constant and satisfy \eqref{nafarange}. We also show the convergence result when $\alpha$ equals the upper bound, which is obtained by \eqref{nafarange}. Another stepsize $\beta$ is fixed to be $\beta_{op}$. We use $\Vert\bm{x}(k)-\bm{x}^*\Vert_2$ to describe the distance between the state at time $k$ and the optimal solution. 
\begin{figure}
		\centering
		\includegraphics[scale=0.17]{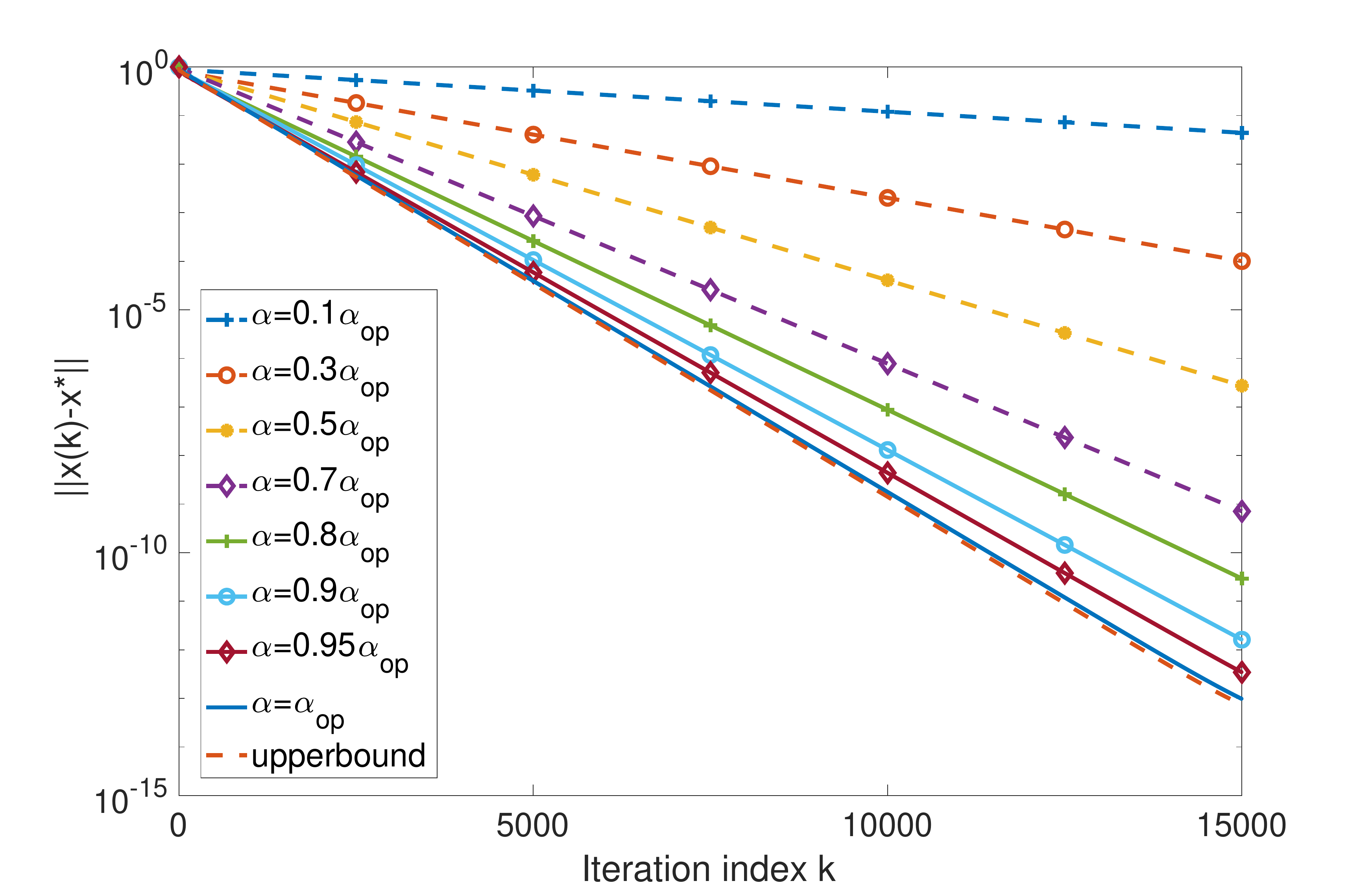}
\caption{Convergence results of DTA with different stepsizes $\alpha$.} 
\end{figure}
The results are shown in Fig. 2.

We obtain from Fig. 2 that the state converge to the optimal solution with all the selected constant stepsizes.  This confirms Theorem \ref{nconsto} and suggests that the proposed algorithm does not need decaying stepsize to ensure convergence. The advantage of adopting constant stepsizes is also shown in Fig. 2 that $\bm{x}(k)$ converges at a linear rate for all the chosen stepsizes. 
Moreover, it is worth noting that
when $\alpha\leq\alpha_{op}$, the convergence rate become faster when $\alpha$ increases and 
when $\alpha=\alpha_{op}$, the algorithm converges faster than that with other selected stepsizes except the upper bound. 
In fact, the optimal stepsize should be extremely close to the upper bound. The reason why there exists a small gap between them is that the theoretical result is obtained based on generic functions defined in Assumptions 1 and 2 instead of specific quadratic functions. It is, however, observed that the theoretical result is quite close to the optimal stepsize, which suggests that the result of Theorem \ref{optimalstepsize} is also suitable for stochastic networks.

To validate the optimal stepsize of $\beta$, i.e. $\beta_{op}$ calculated in Therorem \ref{optimalstepsize}, we select 14 different values of $\beta$, i.e., $0.1\beta_{op}$, $0.3\beta_{op}$, $0.5\beta_{op}$, $0.7\beta_{op}$, $0.8\beta_{op}$, $0.9\beta_{op}$, $\beta_{op}$, $1.01\beta_{op}$, $1.02\beta_{op}$, $1.04\beta_{op}$, $1.06\beta_{op}$, $1.07\beta_{op}$, $1.075\beta_{op}$, $1.08\beta_{op}$. 
The stepsize $\alpha$ is fixed as $\alpha_{op}$. The convergence rate is calulated by 
\begin{align}
q_{n}=\sqrt[N]{\prod_{i=k_e-N+1}^{k_e}\frac{\Vert\bm{x}(k)-\bm{x}_{op}\Vert_2}{\Vert\bm{x}(k-1)-\bm{x}_{op}\Vert_2}},\notag
\end{align}
where $k_e$ denotes the index of the last iteration and $N$ means the amount of the sample. In this experiment, we choose  $k_e=25000$ and $N=1000$.
The result is shown in Fig. 3. We find that the convergence rate $q_n$ decrease gradually first and then increase rapidly.  
The numerical optimal $\beta$ is quite close to the theoretical result $\beta_{op}$.
\begin{figure}
		\centering
		\includegraphics[scale=0.19]{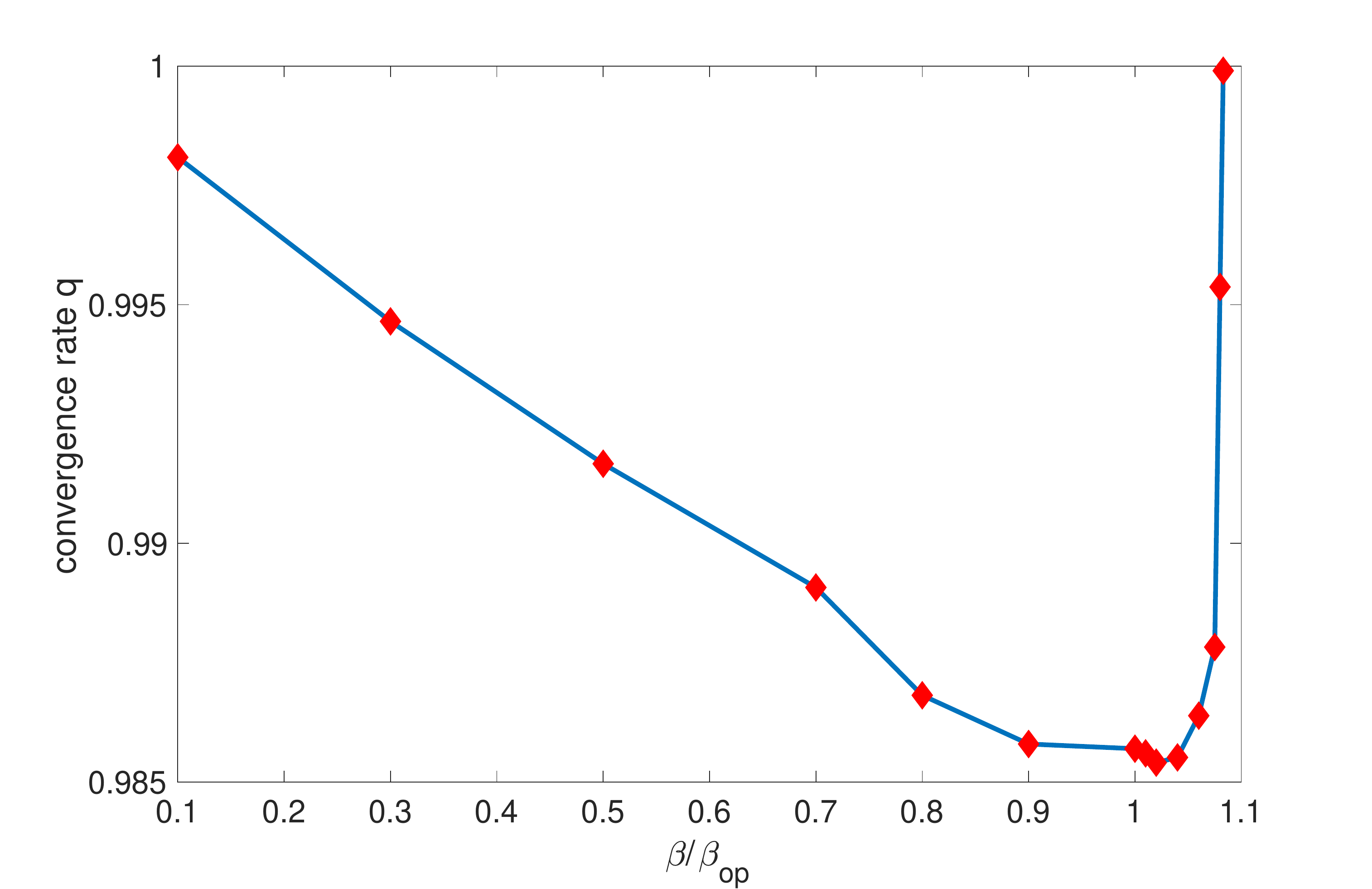}
\caption{Convergence rate $q$ versus the constanst stepsize $\beta$.} 
\end{figure}

To validate the convergence with uncoordinated stepsizes, we randomly generate stepsizes within the range defined by \eqref{range1} and \eqref{range2}. The maximum and minimum of these stepsizes are denoted by $\overline{\alpha}$ and $\underline{\alpha}$, respectively. We first simulate the convergence with these uncoordinated stepsizes. Then, we plot the convergence results when $\alpha_i=\overline{\alpha}, \forall i$ and $\alpha_i=\underline{\alpha}, \forall i$, respectively. The convergence results are shown in Figure 4.
\begin{figure}
		\centering
		\includegraphics[scale=0.22]{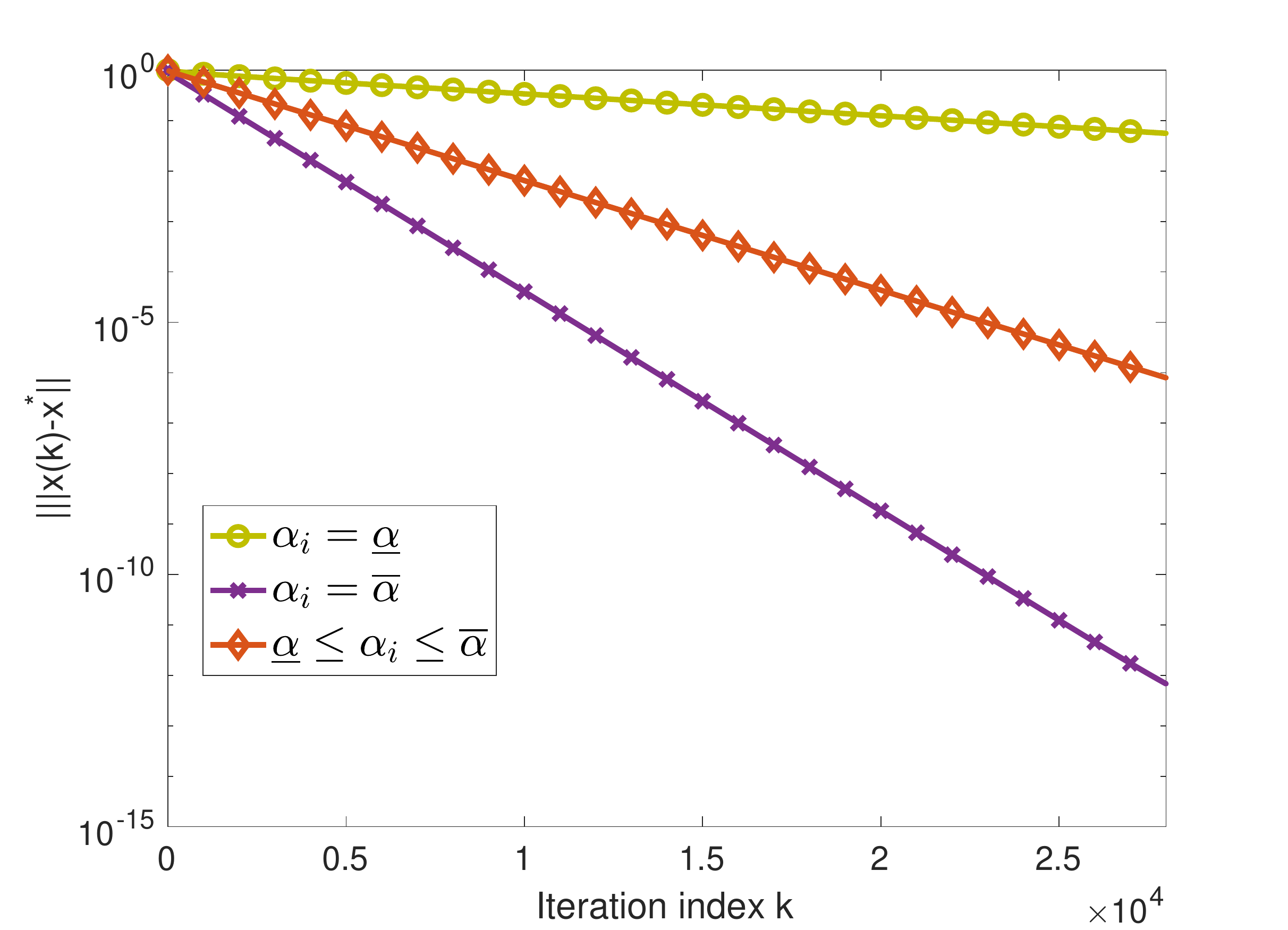}
\caption{Convergence results of DTA with uncoordinated stepsizes.} 
\end{figure}
The figure suggests that the proposed algorithm converges linearly to the optimal solution with uncoordinated stepsizes. We find that the states converge at a faster rate than that when the all agents' stepsizes equal to $\underline{\alpha}$, and slower than that when $\alpha_i=\overline{\alpha}$. So, if $\overline{\alpha}-\underline{\alpha}$ is small, we can  obtain an estimate of the convergence rate with uncoordinated stepsizes.

Next, we analyze the convergence results with disturbance injected in states. The disturbances we choose to inject to the states are Gaussian. To meet the requirement in Assumption \ref{noi}, we set the variance of $\zeta_i(k), \forall i$ converges to 0 linearly.
We select the same eight values as above for stepsize $\alpha$ and set $\beta=\beta_{op}$. 
The results are shown in Fig. 5. 
We can see that states still converge to the optimal solutions even with disturbance. With all the selected stepsizes, the algorithm still converges at a linear rate. 
Moreover, we find that when $\alpha\leq0.7\alpha_{op}$, the convergence rate is faster as $\alpha$ grows. But $\alpha>0.7\alpha_{op}$, the convergence rate does not change obvious and greatly affected by the disturbance. 
\begin{figure}
		\centering
		\includegraphics[scale=0.165]{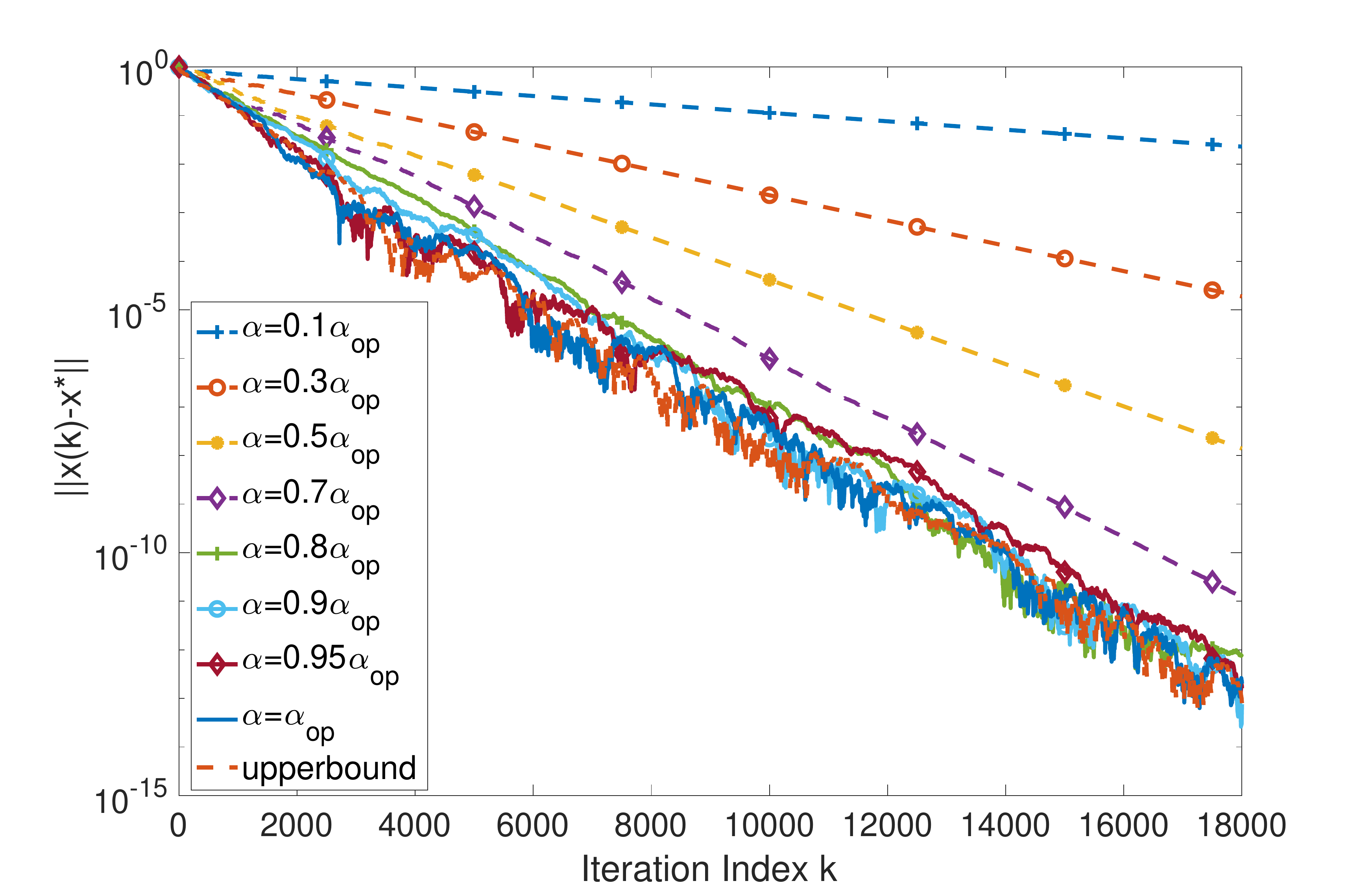}
\caption{Convergence results of DTA with different stepsizes $\alpha$ under disturbance.} 
\end{figure}
This is because the convergence rate is also influenced by the decaying rate of the disturbance, i.e. $q_{\zeta}$ in \eqref{lowerbound}, which is independent of $\alpha$. 

To find how the probability of failure influence convergence rate. We select 10 different value of $\theta$, i.e., 0.9, 0.1, 0.05, 0.03, 0.025, 0.02, 0.015, 0.01, 0.05 and 0. The results are shown Fig. 6. We can see that when $\theta>0.05$, the convergence rate does not change. The reason is that $\vert1-\alpha\vert>\max\Big\{s_1'+\frac{\alpha\beta\overline{\varphi}(1-\underline{\lambda}_n)}{1-s_2'}, s_2'+\frac{\alpha\beta\overline{\varphi}(1-\underline{\lambda}_n)}{1-s_1'}\Big\}$ and $q_s'=|1-\alpha|$, where $\vert1-\alpha\vert$ is independent of $\theta$ while $s_{2}'$ increases as $\theta$ grows. When $\theta\leq0.03$, we find the convergence rate reduces when $\theta$ increases. Especially, when $\theta=0$ the algorithm cannot converge to the optimal solution because the network becomes a fixed disconnected network, which does not satisfy the requirement of Assumption 3.
\begin{figure}
		\centering
		\includegraphics[scale=0.16]{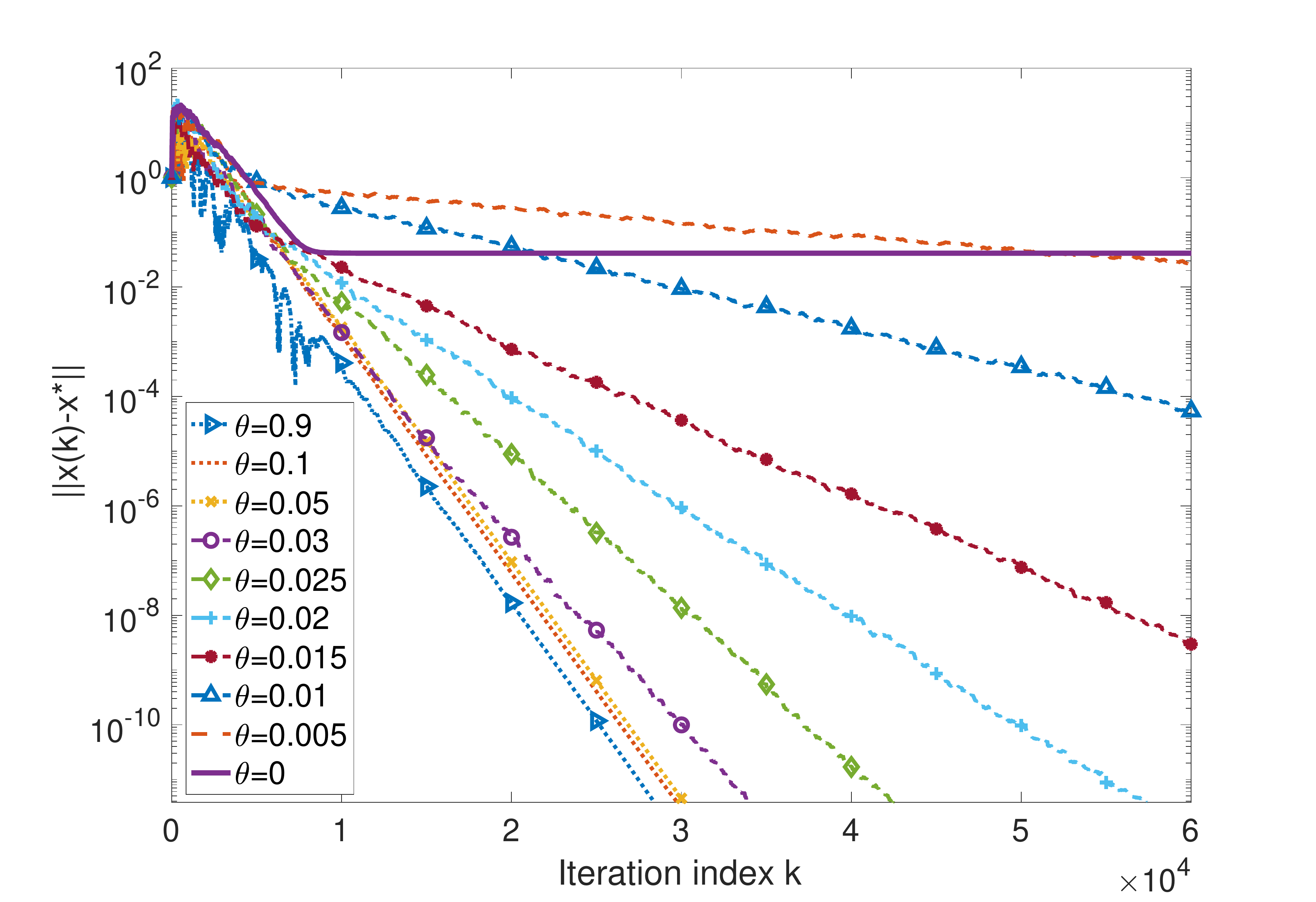}
\caption{Convergence results of DTA with different communication failure probabilities.} 
\end{figure}

\subsection{Comparison with the state-of-art}
Finally, we compare the proposed algorithm with weighted gradients algorithm (WGA) \cite{46} and dual splitting algorithm (DuSPA) \cite{36}.
We consider two types of disturbance: Gaussian disturbance and Laplace disturbance. To meet the requirement of Assumption \ref{noi}, we set the mean and the variance of disturbance decays to 0 linearly. We add these kinds of disturbance to DTA, WGA and DuSPA and obtain the results shown in Fig. 7. 
\begin{figure}
		\centering
		\includegraphics[scale=0.165]{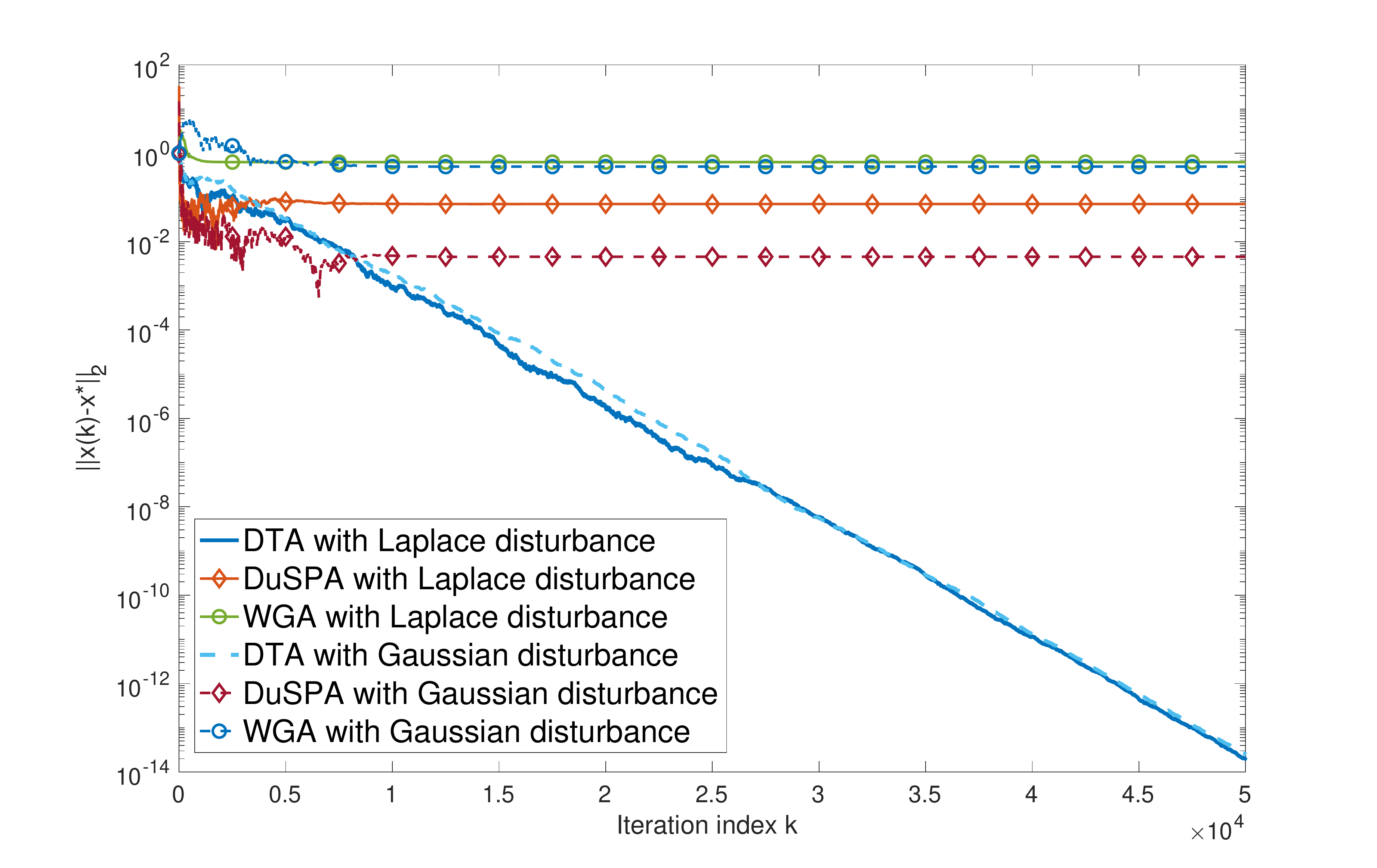}
\caption{Convergence results of DTA, WGA and DuSPA in the presence of Laplace and Gaussian disturbance, respectively.
} 
\end{figure}
We can see that 
utilizing the deviation-tracking method, DTA still converges linearly to the optimal solution even under disturbance while WGA and DuSPA cannot converge to the optimal solution. 
To find why WGA cannot converge to the optimal solution, we obtain the following proposition.
\begin{prop}\label{pro1}
Considering the following WGA,
\begin{align}
x_{i}(k+1)=&x_{i}(k)+\zeta_{i}(k)\notag
\\&-\alpha\sum_{i=1}^{n}w_{ij}(\nabla f_{i}(x_{i}(k))-\nabla f_{j}(x_{j}(k))),\label{gc2}
\end{align}
where the initial value $x_i(0), \forall i$ satisfies $\sum_{i=1}^{n}x_i(0)=\sum_{i=1}^{n}d_i$ and $\zeta_i(k)$ is the disturbance added to $x_i(k)$. $\bm{W}=\{w_{ij}\}_{n\times n}$ is doubly stochastic. Then, at least one of the following does not hold.

(i) $x_i(k), \forall i$ converges to a feasible $x_i(\infty), \forall i$, which satisfies $\sum_{i=1}^{n}x_i(\infty)=\sum_{i=1}^{n}d_i$;

(ii)$\sum_{k=0}^{\infty}\sum_{i=1}^{n}\zeta_i(k)\neq0$.
\end{prop}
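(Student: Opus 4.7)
The plan is to show the contrapositive: assuming (i) holds, the cumulative disturbance must sum to zero, which contradicts (ii). The key idea is that the weighted gradient correction term in \eqref{gc2} is annihilated when summed over all agents, because $\bm{W}$ is doubly stochastic. Thus the only thing that can shift the global sum $\sum_i x_i(k)$ away from the feasible value $\sum_i d_i$ is the injected disturbance, and its effect is purely accumulative.

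First I would sum both sides of \eqref{gc2} over $i \in \{1,\dots,n\}$. Using $\sum_i w_{ij} = 1$ and $\sum_j w_{ij} = 1$, the double sum $\sum_{i,j} w_{ij}(\nabla f_i(x_i(k)) - \nabla f_j(x_j(k)))$ splits into two single sums $\sum_i \nabla f_i(x_i(k)) - \sum_j \nabla f_j(x_j(k))$, which cancel. This yields the recursion
\begin{equation*}
\sum_{i=1}^n x_i(k+1) = \sum_{i=1}^n x_i(k) + \sum_{i=1}^n \zeta_i(k).
\end{equation*}
Unrolling this recursion from $k=0$ and using the feasibility of the initial condition $\sum_i x_i(0) = \sum_i d_i$, I obtain $\sum_{i=1}^n x_i(k+1) = \sum_{i=1}^n d_i + \sum_{t=0}^{k} \sum_{i=1}^n \zeta_i(t)$.

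Next, suppose for contradiction that both (i) and (ii) hold. By (i), passing $k \to \infty$ in the identity above gives $\sum_i x_i(\infty) = \sum_i d_i + \sum_{t=0}^\infty \sum_i \zeta_i(t)$. Combining with (i)'s feasibility condition $\sum_i x_i(\infty) = \sum_i d_i$ forces $\sum_{t=0}^\infty \sum_i \zeta_i(t) = 0$, directly contradicting (ii). Hence at most one of (i) and (ii) can hold, which is exactly the claim.

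The proof is essentially a one-line calculation once the cancellation from double stochasticity is observed; there is no real obstacle. The only mild subtlety is the existence of the limit $\lim_{k\to\infty} \sum_{t=0}^k \sum_i \zeta_i(t)$, which is guaranteed by (i) itself (since the left-hand side $\sum_i x_i(k+1)$ converges), so the manipulation is rigorous without needing separate summability assumptions on $\zeta_i(k)$.
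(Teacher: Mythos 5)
Your proposal is correct and follows essentially the same route as the paper's proof: sum \eqref{gc2} over $i$, use double stochasticity to cancel the gradient term, unroll the resulting recursion from the feasible initialization, and conclude that (i) forces $\sum_{k=0}^{\infty}\sum_{i=1}^{n}\zeta_i(k)=0$, contradicting (ii). Your remark on why the limit of the partial disturbance sums exists is a small rigor improvement over the paper's version, but the argument is otherwise identical.
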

The proof of Proposition 2 is shown in Appendix E.
Proposition 2 shows that $\sum_{k=0}^{\infty}\sum_{i=1}^{n}\zeta_i(k)=0$ is a necessary condition for (i). 
This implies that the effect from $\zeta_i(k)$ is accumulative and the perturbation of even only one agent at one iteration causes a deviation of convergence. This is the reason why WGA cannot converge to the optimal solution under disturbance. This Proposition is also applicable to DuSPA because $\bm{y}(k)$ in \cite{36} is also required to be feasible through all iterations. Any disturbance injected to $\bm{y}(k)$ leads to infeasible states. Figure 7 shows that the gaps between states and the optimal solutions caused by disturbance cannot be eliminated by WGA and DuSPA, which validates Proposition \ref{pro1}.
Compared with WGA and DuSPA, DTA is disturbance-resilient as shown in Theorem 2.

\section{Conclusion}

In this paper, we studied the distributed resource allocation problem over stochastic networks subject to random communication failure and disturbance on state. Based on the conditions of optimal solution, we proposed a disturbance-resilient distributed resource allocation algorithm by using deviation-tracking methods. Different from most existing algorithms that are only suitable for fixed networks, the proposed algorithm applies to stochastic networks.
The algorithm was proved to converge linearly to the optimal solution in mean square even with communication failure. Moreover, we find that the proposed algorithm has resilience to disturbance, i.e., our algorithm still converges to the optimal solution in mean square even under the disturbance on states. 
Based on the convergence result, we provided a method to improve the convergence rate. It was also proved that the proposed algorithm converges to the optimal solution in mean square even with uncoordinated stepsizes.
Future works will focus on constrained distributed resource allocation and application of the algorithm to smart grid.

\appendices
\begin{spacing}{1}

\section{Proof of Lemma 1}
By definition, we obtain
\begin{align}
\frac{\Vert\bm{T}\bm{v}\Vert_2}{\Vert\bm{v}\Vert_2}\leq\rho(\bm{T})\leq\rho(\bm{L})\rho(\bm{\Gamma})\rho(\bm{I}-\bm{A})\leq\overline{\varphi}(1-{\lambda}_n(\bm{A})).\notag
\end{align}
This proves the right most inequality. 

Next, consider the left inequality of \eqref{lem1p1}. Due to Assumption 3,  we have $\bm{1}^T(\bm{I}-\bm{A})\bm{v}=0$. Let $\bm{z}=(\bm{I}-\bm{A})\bm{v}$, $z=[z_1,z_2,...,z_n]^T$, $z_i\in R, \forall i$. For that $\bm{1}^T\bm{v}=0$ and $\bm{v}\neq0$, we have $\bm{1}^T\bm{z}=0$ and $\bm{z}\neq0$, so there exists a full permutation of $\{1,...,n\}$, which is denoted by ${\iota_1,...,\iota_n}$, and $t\in\{1,...,n-1\}$ such that 
$z_{\iota_i}\geq0, \forall i\in\{0,...,t\}$, $z_{\iota_i}<0, \forall i\in\{t+1,...,n\}$ and 
$\sum_{i=1}^{t}z_{\iota_i}=-\sum_{i=t+1}^{n}z_{\iota_i}.$

Since $\bm{L}=\bm{I}-\frac{\bm{11}^T}{n}$ and $\frac{\bm{11}^T}{n}$ has only one nonzero eigenvalue $1$, whose corresponding eigenvectors lie in $span\{\bm{1}\}$, we consider the projection of $\bm{\Gamma}(\bm{I}-\bm{A})\bm{v}$ on $span\{\bm{1}\}$, i.e., $\frac{\bm{11}^T}{n}\bm{\Gamma}(\bm{I}-\bm{A})\bm{v}$.
We have $
\left\Vert\frac{\bm{11}^T}{n}\bm{\Gamma}(\bm{I}-\bm{A})\bm{v}\right\Vert_2=\frac{1}{\sqrt{n}}\bm{1}^T\bm{\Gamma}(\bm{I}-\bm{A})\bm{v}=\frac{1}{\sqrt{n}}\sum_{i=1}^{n}\gamma_{\iota_i}z_{\iota_i}.$
Then, 
\begin{align}
&\frac{\left\Vert\frac{\bm{11}^T}{n}\bm{\Gamma}(\bm{I}-\bm{A})\bm{v}\right\Vert_2^2}{\left\Vert\bm{\Gamma}(\bm{I}-\bm{A})\bm{v}\right\Vert_2^2}\leq\frac{\left(\sum_{i=1}^{n}\gamma_{\iota_i}z_{\iota_i}\right)^2}{n\sum_{i=1}^{n}\gamma_{\iota_i}^2z_{\iota_i}^2}\notag
\\&\leq\frac{\left(\sum_{i=1}^{t}\gamma_{\iota_i}z_{\iota_i}+\sum_{i=t+1}^{n}\gamma_{\iota_i}z_{\iota_i}\right)^2}{\frac{n}{t}(\sum_{i=1}^{t}\gamma_{\iota_i}z_{\iota_i})^2+\frac{n}{n-t}(\sum_{i=t+1}^{n}\gamma_{\iota_i}z_{\iota_i})^2}\notag
\\&\leq\frac{\left(\overline{\varphi}-\underline{\eta}\right)^2\left(\sum_{i=1}^{t}z_{\iota_i}\right)^2}
{(\frac{n\overline{\varphi}^2}{t}+\frac{n\underline{\eta}^2}{n-t})(\sum_{i=t+1}^{n}z_{\iota_i})^2}
\leq\frac{(n-1)\left(\overline{\varphi}-\underline{\eta}\right)^2}{n\left[\overline{\varphi}^2+(n-1)\underline{\eta}^2\right]}.\label{lem12}
\end{align}
It thus follows that $
\frac{\left\Vert\bm{L}\bm{\Gamma}(\bm{I}-\bm{A})\bm{v}\right\Vert_2^2}{\left\Vert\bm{\Gamma}(\bm{I}-\bm{A})\bm{v}\right\Vert_2^2}
\geq\frac{\left[\overline{\varphi}+(n-1)\underline{\eta}\right]^2}{n\left[\overline{\varphi}^2+(n-1)\underline{\eta}^2\right]}.$
Then, we have
\begin{align}
\frac{\left\Vert\bm{L}\bm{\Gamma}(\bm{I}-\bm{A})\bm{v}\right\Vert_2^2}{\left\Vert\bm{v}\right\Vert_2^2}&=\frac{\left\Vert\bm{L}\bm{\Gamma}(\bm{I}-\bm{A})\bm{v}\right\Vert_2^2}{\left\Vert\bm{\Gamma}(\bm{I}-\bm{A})\bm{v}\right\Vert_2^2}\cdot\frac{\left\Vert\bm{\Gamma}(\bm{I}-\bm{A})\bm{v}\right\Vert_2^2}{\left\Vert\bm{v}\right\Vert_2^2}\notag
\\&\geq\frac{\underline{\eta}^2(1-\lambda_2(\bm{A}))^2\left[\overline{\varphi}+(n-1)\underline{\eta}\right]^2}{n\left[\overline{\varphi}^2+(n-1)\underline{\eta}^2\right]},\label{lem14}
\end{align}
which completes the proof the left part of \eqref{lem1p1}.

\section{Proof of Lemma 2}
We decompose $\Vert(\bm{I}-\beta\bm{T}_s)\bm{v}\Vert_2$ and obtain 
\begin{align}
&\mathbb{E}\{\Vert(\bm{I}-\beta\bm{T}_s)\bm{v}\Vert_2^2\}=\mathbb{E}\{\bm{v}^T(\bm{I}-\beta\bm{T}_s)^T(\bm{I}-\beta\bm{T}_s)\bm{v}\}\notag
\\&=\mathbb{E}\{\bm{v}^T(\bm{I}+\beta^2\bm{T}_s^T\bm{T}_s-\beta\bm{T}_s^T-\beta\bm{T}_s)\bm{v}\}\notag
\\&=\mathbb{E}\{\Vert\bm{v}\Vert_2^2\}+\beta^2\mathbb{E}\{\Vert\bm{T}_s\bm{v}\Vert_2^2\}-2\beta\mathbb{E}\{\bm{v}^T\bm{T}_s\bm{v}\}.\label{lem1n0}
\end{align}
First, we consider $\mathbb{E}\{\Vert\bm{T}_s\bm{v}\Vert_2^2\}$. 
Since
\begin{align}
\frac{\Vert\bm{T}_s\bm{v}\Vert_2}{\Vert\bm{v}\Vert_2}\leq\rho(\bm{T}_s)\leq\rho(\bm{L})\rho(\bm{\Gamma})\rho(\bm{I}-\bm{W})\leq\overline{\varphi}(1-\underline{\lambda}_n),
\end{align}
We have $\mathbb{E}\{\Vert\bm{T}_s\bm{v}\Vert_2^2\}\leq\overline{\varphi}^2(1-\underline{\lambda}_n)^2\mathbb{E}\{\Vert\bm{v}\Vert_2^2\}$.

Next, we will find the lower bound of $\frac{\mathbb{E}\{\bm{v}^T\bm{T}_s\bm{v}\}}{\mathbb{E}\{\Vert\bm{v}\Vert_2^2\}}$.

Similar to \eqref{lem14}, we have 
\begin{align}
\frac{\Vert\bm{L}\bm{\Gamma}(\bm{I}-\mathbb{E}\{\bm{W}\})\bm{v}\Vert_2}{\Vert\bm{v}\Vert_2}\geq\frac{\underline{\eta}^2(1-\bar{\lambda}_{2e})^2\left[\overline{\varphi}+(n-1)\underline{\eta}\right]^2}{n\left[\overline{\varphi}^2+(n-1)\underline{\eta}^2\right]}.\notag
\end{align}

Therefore, for any vector $\bm{v}$ such that $\bm{1}^T\bm{v}=0$ and $\bm{v}\neq\bm{0}$,
\begin{align}
K_1\leq\frac{\Vert\bm{L}\bm{\Gamma}(\bm{I}-\mathbb{E}\{\bm{W}\})\bm{v}\Vert_2}{\Vert\bm{v}\Vert_2}\leq K_2.\label{lem1n4}
\end{align}

Because $\bm{L}$ and $\bm{I}-\mathbb{E}\{\bm{W}\}$ are positive semi-definite, $\bm{\Gamma}$ is positive definite, we get that the eigenvalues of $\bm{L}\bm{\Gamma}(\bm{I}-\mathbb{E}\{\bm{W}\})$ are nonnegative. Then, together with \eqref{lem14}, we have for any sufficiently small $\iota>0$,
\begin{align}
\Vert(\bm{I}-\iota\bm{L}\bm{\Gamma}(\bm{I}-\mathbb{E}\{\bm{W}\}))\bm{v}\Vert_2\leq(1-\iota K_1)\Vert\bm{v}\Vert_2.\label{lem1n1}
\end{align} 
On the other hand, 
\begin{align}
&\Vert(\bm{I}-\iota\bm{L}\bm{\Gamma}(\bm{I}-\mathbb{E}\{\bm{W}\}))\bm{v}\Vert_2\notag
\\&=\Vert\bm{v}\Vert_2^2+\iota^2\Vert\bm{L}\bm{\Gamma}(\bm{I}-\mathbb{E}\{\bm{W}\})\bm{v}\Vert_2^2-2\iota\bm{v}^T\bm{L}\bm{\Gamma}(\bm{I}-\mathbb{E}\{\bm{W}\})\bm{v}\notag
\\&\geq(1+\iota^2K_1^2)\Vert\bm{v}\Vert_2^2-2\iota\bm{v}^T\bm{L}\bm{\Gamma}(\bm{I}-\mathbb{E}\{\bm{W}\})\bm{v}.\label{lem1n2}
\end{align} 
Combining \eqref{lem1n1} with \eqref{lem1n2}, we have 
$
(1+\iota^2K_1^2)\Vert\bm{v}\Vert_2^2-2\iota\bm{v}^T\bm{L}\bm{\Gamma}(\bm{I}-\mathbb{E}\{\bm{W}\})\bm{v}\leq(1-\iota K_1)^2\Vert\bm{v}\Vert_2^2
$. 
Then, we obtain 
$
\bm{v}^T\bm{L}\bm{\Gamma}(\bm{I}-\mathbb{E}\{\bm{W}\})\bm{v}\geq K_1\Vert\bm{v}\Vert_2^2.
$
Since $\bm{L}$ and $\bm{\Gamma}$ is constant, we have
$
\mathbb{E}\{\bm{v}^T\bm{T}_s\bm{v}\}=\mathbb{E}\{\bm{v}^T\bm{L}\bm{\Gamma}(\bm{I}-\mathbb{E}\{\bm{W}\})\bm{v}\}\geq K_1\mathbb{E}\{\Vert\bm{v}\Vert_2^2\}.
$
Substituting it into \eqref{lem1n0}, we have 
\begin{align}
\mathbb{E}\{\Vert(\bm{I}-\beta\bm{T}_s)\bm{v}\Vert_2^2\}\leq&(1+\beta^2K_2^2-2\beta K_1)\mathbb{E}\{\Vert\bm{v}\Vert_2^2\}.
\end{align}
This completes the proof.

\section{Proof of Lemma 3}
Let $\mathcal{E}_1(k)=\{(i, j)|\{\mathbb{P}\{w_{ij}(k)>0\}>0\}\}$ and $\mathcal{E}_2(k)=\{(i, j)|\{\mathbb{P}\{w'_{ij}(k)>0\}>0\}\}$, 
where $w'_{ij}(k)=\sum_{t=1}^{n}w_{it}(k)w_{tj}(k)$. Since $\mathbb{P}\{w_{ij}(k)\geq0\}=1, \forall i, j$ and $\mathbb{P}\{w_{ii}(k)>0\}=1, \forall i$, we have that $\forall i, j$,
\begin{align}
\mathbb{P}\{w'_{ij}(k)>0\}\geq\mathbb{P}\{w_{ii}(k)w_{ij}(k)>0\}=\mathbb{P}\{w_{ij}(k)>0\},\notag
\end{align}
which suggests $\mathcal{E}_1(k)\subseteq\mathcal{E}_2(k)$. From Assumption \ref{network}, $\mathcal{E}_1(k)$ is connected, so, $\mathcal{E}_2(k)$ is also connected. Then, we can obtain \eqref{lemm3}. This completes the proof.

\section{Proof if Theorem 3}
Similar to the proof of Theorem 1, 
it is not difficult to obtain that there exist $m_0>0$ and $\max\{\vert1-\alpha\vert, q_{\zeta}\}<q_0<1$ such that 
$\Vert\bar{\bm{y}}(k)\Vert_2\leq w_{0}q_{0}^k$.
We can also obtain that 
\begin{align}
\nabla\check{\bm{f}}(\bm{x}(k+1))=&\left(\bm{I}-\beta\bm{L}\bm{\Theta}(k)(\bm{I}-\bm{W})\right)\nabla\check{\bm{f}}(\bm{x}(k))\notag
\\&-\alpha\bm{L}\bm{\Theta}(k)\check{\bm{y}}(k)-\alpha\bm{L}\bm{\Theta}(k)\bar{\bm{y}}(k)\notag
\\&+\bm{L}\bm{\Theta}(k)\bm{\zeta}(k).\label{mth1e5}
\end{align}
\begin{align}
\check{\bm{y}}(k+1)=&(\bm{W}-\alpha\bm{I})\check{\bm{y}}(k)+\check{\bm{\zeta}}(k)-\beta(\bm{I}-\bm{W})\nabla\check{\bm{f}}(\bm{x}(k)).
\label{mth1ex7}
\end{align}


Next, we will prove 
$\Vert\check{\bm{y}}(k)\Vert_E\leq m_1'q'^k$ and $
\Vert\nabla\check{\bm{f}}(\bm{x}(k))\Vert_E\leq m_2'q'^k$
by mathematical induction based on \eqref{mth1e5} and \eqref{mth1ex7}.

It is not difficult to prove that when $k=0$, there exist $m_1', m_2'>0$ and $q'\in(q_0,1)$ such that \eqref{res21} and \eqref{res31} hold.

Then, we assume that for all $k\leq\kappa$, \eqref{res21} and \eqref{res31} hold.

When $k=\kappa+1$,
from \eqref{th1ex7}, we have 
\begin{align}
\Vert\check{\bm{y}}(k+1)\Vert_E\leq \left[s_1m_1'+\beta(1-\underline{\lambda}_{ne})m_2'+m_{\zeta}\right]q'^k.\label{th1ex8}
\end{align}
From \eqref{th1e5} and \eqref{barcon}, we obtain
\begin{align}
\Vert\nabla\check{\bm{f}}(\bm{x}(k+1))\Vert_E
\leq \left[s_2m_2'+\alpha\overline{\varphi}(m_1'+m_0)+\overline{\varphi}m_{\zeta}\right]q'^k.\label{th1ex9}
\end{align}
Due to $\alpha<1-\underline{\lambda}_{ne}, \ \beta<\frac{2}{K_2}$, we have $s_1<1$ and $s_2<1$.
Since
$\alpha\beta\overline{\varphi}(1-\underline{\lambda}_{ne})<(1-s_1)(1-s_2)$, similar to \eqref{mth1ex10} and \eqref{mth1ex11}, we have 
there exist $m_1'>0$ and $m_2'>0$ and $q'\in(q_0, 1)$ such that 
\begin{align}
\left[s_1m_1'+\beta(1-\underline{\lambda}_{ne})m_2'+m_{\zeta}\right]q'^k\leq m_1'q'^{k+1},\label{th1ex10}
\\\left[s_2m_2'+\alpha\overline{\varphi}(m_1'+m_0)+\overline{\varphi}m_{\zeta}\right]q'^k\leq m_2'q'^{k+1},\label{th1ex11}
\end{align}
Substituting \eqref{th1ex10} and \eqref{th1ex11} into \eqref{th1ex8} and \eqref{th1ex9} yields
\begin{align}
&\Vert\check{\bm{y}}(k+1)\Vert_E\leq m_1'q'^{k+1},\notag
\\&\Vert\nabla\check{\bm{f}}(\bm{x}(k+1))\Vert_E\leq m_2'q'^{k+1}.\notag
\end{align}
This completes the induction.
Then, we have 
\begin{align}
\Vert\bm{y}(k)\Vert_E\leq\Vert\bar{\bm{y}}(k)\Vert_E+\Vert\check{\bm{y}}(k)\Vert_E\leq(m_0+m_1')q'^k.
\end{align}
The proof is completed.
\section{Proof of Proposition 2}

We prove that if (ii) holds, (i) does not hold. 
Since $\bm{W}$ is doubly stochastic, we have 
$\sum_{i=1}^{n}\sum_{i=1}^{n}w_{ij}(\nabla f_{i}x_{i}(k))-\nabla f_{j}(x_{j}(k)))=0.$
We obtain from \eqref{gc2} that $
\sum_{i=1}^{n}x_{i}(k+1)=\sum_{i=1}^{n}x_{i}(k)+\sum_{i=1}^{n}\zeta_{i}(k).$
Then, we get$
\sum_{i=1}^{n}x_{i}(k+1)=\sum_{i=1}^{n}d_{i}+\sum_{k=0}^{\infty}\sum_{i=1}^{n}\zeta_i(k).$
We can see that if (ii) holds, $\sum_{i=1}^{n}x_{i}(\infty)\neq\sum_{i=1}^{n}d_{i}$, so (i) does not hold. This completes the proof.
\end{spacing}

\ifCLASSOPTIONcaptionsoff
  \newpage
\fi

\bibliographystyle{IEEEbib}
\bibliography{antidis}
\begin{IEEEbiography}[{\includegraphics[width=1in,height=1.25in,clip,keepaspectratio]{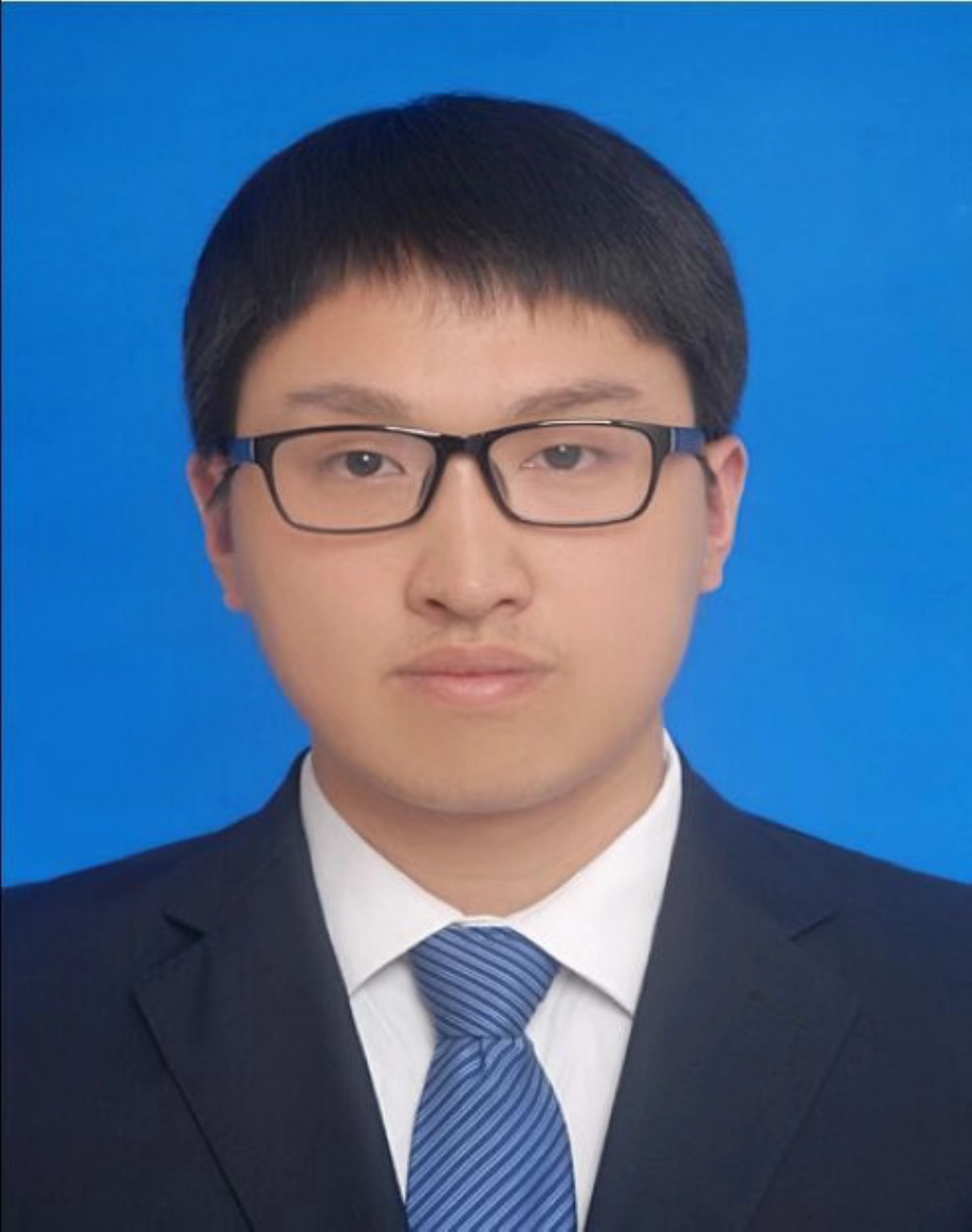}}]{Tie Ding}
received the B.Eng. degree in electrical engineering from Sichuan University, Chengdu, China, in 2016. He is currently pursuing the Ph.D. degree at the Department of Automation, School of Electronic Information
and Electrical Engineering, Shanghai Jiao Tong University, Shanghai, China.

His current research interests include distributed optimization and control in multi-agent systems,  and differential privacy in distributed networks.
\end{IEEEbiography}
\begin{IEEEbiography}[{\includegraphics[width=1in,height=1.25in,clip,keepaspectratio]{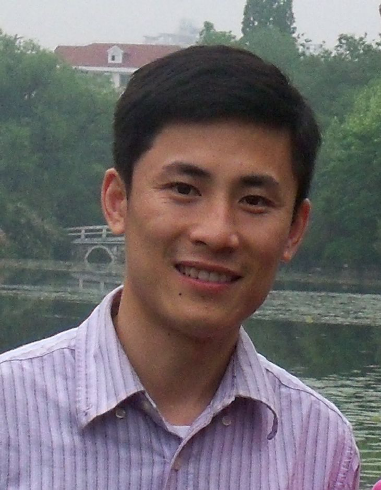}}]{Shanying Zhu}
(S'12-M'15) received the B.S. degree in information and computing science from the North China University of Water Resources and Electric Power, Zhengzhou, China, in 2006, the M.S. degree in applied mathematics from the
Huazhong University of Science and Technology, Wuhan, China, in 2008, and the Ph.D. degree in control theory and control engineering from Shanghai Jiao Tong University, Shanghai, China, in 2013.
From 2013 to 2015, he was a Research Fellow with the School of Electrical and Electronic Engineering, Nanyang
Technological University, Singapore, and also with the Berkeley Education Alliance for Research, Singapore. He joined Shanghai Jiao Tong University in 2015, where he is currently an Associate Professor with the Department
of Automation. 

His current research interests include multiagent systems and wireless sensor networks, particularly in coordination control of mobile robots and distributed detection and estimation in sensor networks, and their
applications in industrial networks.
\end{IEEEbiography}
\begin{IEEEbiography}[{\includegraphics[width=1in,height=1.25in,clip,keepaspectratio]{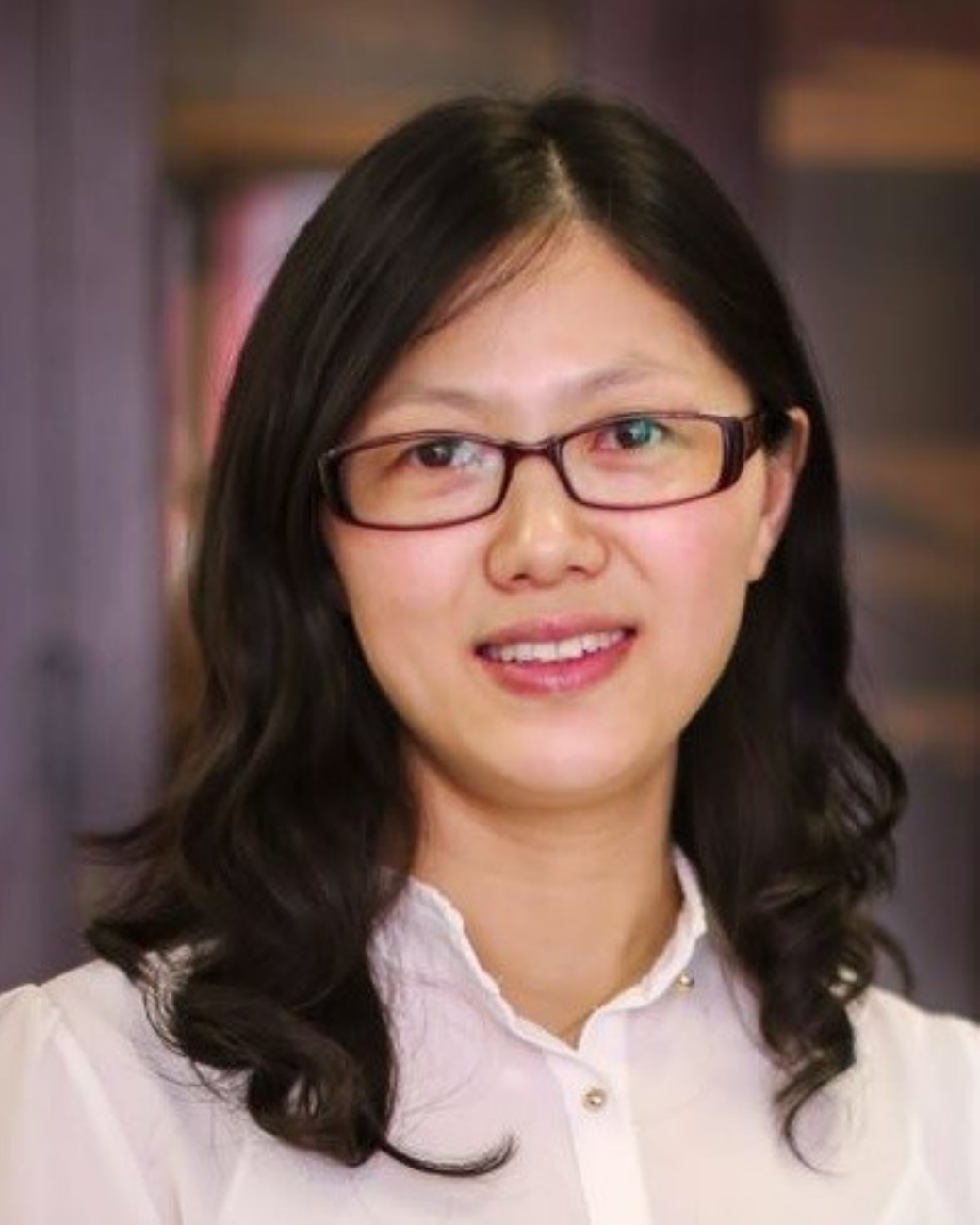}}]{Cailian Chen}
(S'03-M'06) received the B.Eng. and M.Eng. degrees in automatic control from Yanshan University, Qinhuangdao, China, in 2000 and 2002, respectively, and the Ph.D. degree in control and systems from the City University of Hong Kong, Hong Kong, in 2006. 

She was a Senior Research Associate with the City University of Hong Kong, in 2006 and a Post-Doctoral Research Associate with the University of Manchester, Manchester, U.K., from 2006 to 2008. She joined the Department of Automation, Shanghai Jiao Tong University, Shanghai, China, in 2008 as an Associate Professor, where she is currently a Full Professor. She was a Visiting Professor with the University of Waterloo, Waterloo, ON, Canada, from 2013 to 2014. She has authored or co-authored two research monographs and over 100 referred international journal and conference papers. She has invented over 20 patents. Her current research interests include wireless sensor networks and industrial applications, computational intelligence and distributed situation awareness, cognitive radio networks and system design, Internet of Vehicles and applications in intelligent transportation, and distributed optimization.

Dr. Chen was a recipient of the prestigious IEEE Transactions on Fuzzy Systems Outstanding Paper Award in 2008 and the Best Paper Award of the Ninth International Conference on Wireless Communications and Signal Processing in 2017. She was the recipient of the First Prize of Natural Science Award twice from the Ministry of Education of China in 2006 and 2016, respectively. She was honored as the ?Changjiang Young Scholar? by the Ministry of Education of China in 2015 and ?Excellent Young Researcher? by the NSF of China in 2016.
\end{IEEEbiography}

\begin{IEEEbiography}[{\includegraphics[width=1in,height=1.25in,clip,keepaspectratio]{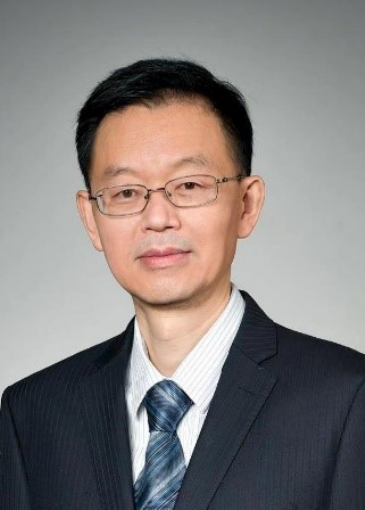}}]{Xinping Guan}
(M'04-SM'04-F'18) is currently a Chair Professor with Shanghai Jiao Tong University, Shanghai, China, where he is the Dean of the School of Electronic, Information and Electrical Engineering, and the Director of the Key Laboratory of Systems Control and Information Processing, Ministry of Education of China. He has authored
or co-authored 4 research monographs, over 270 papers in IEEE TRANSACTIONS and other peer-reviewed journals and numerous conference papers. His current research interests include industrial cyber-physical systems, wireless networking and applications in smart city and smart factory, and underwater sensor networks.

Dr. Guan was a recipient of the Second Prize of the National Natural Science Award of China in 2008, the First Prize of the Natural Science Award from the Ministry of Education of China in both 2006 and 2016, the First Prize of the Technological Invention Award of Shanghai Municipal, China, in 2017, the IEEE Transactions on Fuzzy Systems Outstanding Paper Award in 2008, the National Outstanding Youth Honored by the NSF of China, the Changjiang Scholar by the Ministry of Education of China, and the State-Level Scholar of New Century Bai Qianwan Talent Program of China. As a Principal Investigator, he has finished/been working on many national key projects. He is the Leader of the prestigious Innovative Research Team of the National Natural Science Foundation of China. He is an Executive Committee member of the Chinese Automation Association Council and the Chinese Artificial Intelligence Association Council.
\end{IEEEbiography}
\end{document}